\numberwithin{equation}{section}
\newtheorem{thm}{Theorem}[section]
\newtheorem{lem}[thm]{Lemma}
\newtheorem{prop}[thm]{Proposition}
\newtheorem{cor}[thm]{Corollary}
\newtheorem{rem}[thm]{Remark}
\newcommand{\B}{{\Bbb B}}
\newcommand{\G}{{\Bbb G}}
\newcommand{\Z}{{\Bbb Z}}
\newcommand{\R}{{\Bbb R}}
\newcommand{\C}{{\Bbb C}}
\newcommand{\N}{{\Bbb N}}
\newcommand{\U}{{\Bbb U}}
\newcommand{\X}{{\Bbb X}}
\newcommand{\calC}{{\cal C}}
\newcommand{\calD}{{\cal D}}
\newcommand{\calH}{{\cal H}}
\newcommand{\calO}{{\cal O}}
\newcommand{\calR}{{\cal R}}
\newcommand{\calS}{{\cal S}}
\newcommand{\calU}{{\cal U}}
\newcommand{\frX}{{\frak X}}
\newcommand{\pair}[2]{\left\langle {#1}, \, {#2}\right\rangle}
\newcommand{\set}[2]{\left\{\left.#1\vphantom{#2}\:\right\vert\:#2\right\}}
\newcommand{\wt}{\widetilde}
\newcommand{\what}{\widehat}
\newcommand{\fra}{{\frak a}}
\newcommand{\frp}{{\frak p}}
\newcommand{\lam}{{\lambda}}
\newcommand{\Lam}{{\Lambda}}
\newcommand{\ve}{{\varepsilon}}
\newcommand{\alp}{{\alpha}}  
\newcommand{\vphi}{{\varphi}} 
\newcommand{\eps}{{\epsilon}} 
\newcommand{\slit}{\vspace{5mm}}
\newcommand{\mslit}{\vspace{3mm}}
\newcommand{\real}{{\rm Re}}
\newcommand{\Hom}{{\rm Hom}}
\newcommand{\ol}[1]{\overline{#1}}
\newcommand{\hec}{{\calH(G,K)}}
\newcommand{\CKX}{{\calC^\infty(K \backslash X)}}
\newcommand{\SKX}{{\calS(K \backslash X)}}
\newcommand{\ch}{{\mbox{ch}}}
\newcommand{\abs}[1]{\left\lvert{#1}\right\rvert} 
\newcommand{\bsqcup}{\mathop{\textstyle{\bigsqcup}}}
\newcommand{\bcup}{\mathop{\textstyle{\bigcup}}}
\newcommand{\dint}[1]{\displaystyle{\int_{#1}}}
\newcommand{\dprod}[1]{\displaystyle{\prod_{#1}}}
\newcommand{\dsqcup}[1]{\displaystyle{\bigsqcup_{#1}}}
\newcommand{\shita}[2]{\substack{#1\\#2}} 
\newcommand{\twomatrix}[4]{\begin{pmatrix}
                           {#1} & {#2}\\
                           {#3} & {#4}
                          \end{pmatrix}}
\newcommand{\twovector}[2]{\begin{pmatrix}{#1}\\{#2}\end{pmatrix}}
\newcommand{\twomatrixplus}[4]{\left(\begin{array}{c|c}
                           {#1}  & {#2}\\
                           \hline
                           {#3} &  {#4}
                          \end{array}\right)}
\newcommand{\twomatrixminus}[4]{\begin{array}{cc}
                           {#1}  & {#2}\\
                           {#3} &  {#4}
                          \end{array}}
\newcommand{\mapdownr}[1]{\Big\downarrow
   \rlap{$\vcenter{\hbox{$\scriptstyle#1$}}$ }}
\newcommand{\bfd}{\mathbf{d}}
\begin{document}


%
%


\title{Spherical functions on the space of \\$p$-adic unitary hermitian matrices}

\author{Yumiko Hironaka \\
\it Department of Mathematics\\
\it Faculty of Education and Integrated Sciences, Waseda University\\
\it Nishi-Waseda, Tokyo, 169-8050, JAPAN
\\
\it hironaka@waseda.jp
\\
\\
Yasushi Komori\\
\it Department of Mathematics\\
\it Faculty of Science, Rikkyo University\\
\it Nishi-Ikebukuro, Tokyo, 171-8501, JAPAN
\\
\it komori@rikkyo.ac.jp
}

\maketitle


\begin{abstract}
  We investigate the space $X$ of unitary hermitian matrices over
  $\frp$-adic fields through spherical functions. 
  First we consider Cartan decomposition of $X$, and give precise
  representatives for fields with odd residual characteristic, i.e.,
  $2\notin \frp$. In the latter half we assume odd residual
  characteristic, and give explicit formulas of typical spherical
  functions on $X$, where Hall-Littlewood symmetric polynomials of
  type $C_n$ appear as a main term, parametrization of all the
  spherical functions. By spherical Fourier transform, we show the
  Schwartz space $\SKX$ is a free Hecke algebra $\hec$-module of rank
  $2^n$, where $2n$ is the size of matrices in $X$, and give the
  explicit Plancherel formula on $\SKX$.
\end{abstract}

Keywords: spherical functions, Plancherel formula, unitary
groups, hermitian matrices, Hall-Littlewood symmetric polynomials.

\medskip

Mathematics Subject Classification 2010: 11F85 (primary); 11E95, 11F70, 22E50, 33D52 (secondary)


\medskip

This research is partially supported by Grant-in-Aid for scientific Research (C): 22540045, 24540031, 25400026.



\setcounter{section}{-1}
\section{Introduction}

Let $\G$ be a reductive algebraic group and $\X$ a $\G$-homogeneous affine algebraic variety, where everything is assumed to be defined over a $\frp$-adic field $k$. We denote by $G$ and $X$ the sets of $k$-rational points of $\G$ and $\X$, respectively, take a maximal compact subgroup $K$ of $G$, and consider the Hecke algebra $\hec$. Then, a nonzero $K$-invariant function on $X$ is called {\it a spherical function on $X$} if it is an $\hec$-common eigenfunction.

Spherical functions on homogeneous spaces comprise an interesting topic to investigate and a basic tool to study harmonic analysis on $G$-space $X$. They have been studied as spherical vectors of distinguished models, Shalika functions and Whittaker-Shintani functions, there are close relation to the theory of automorphic forms, and spherical functions may appear in local factor of global object like Rankin-Selberg convolution and Eisenstein series. The theory of spherical functions has also an application of classical number theory, e.g. local densities of representations of quadratic forms or hermitian forms.  

To obtain explicit formulas of spherical functions is one of basic problems, and it has been done for the group case by I.~G.~Macdonald and afterwards by W.~Casselman by a representation theoretical method(\cite{Mac71}, \cite{Cas}). 
For homogeneous spaces, there are results 
mainly for the case that the space of spherical functions attached to each Satake parameter is of dimension one (e.g. \cite{CasS}, \cite{Alt}, \cite{KMS}, \cite{Offen}, \cite{Sake}). 

   
%
The first author has given a general expression of spherical functions on the basis of data of the group $G$ and functional equations of spherical functions when the dimension is not necessarily one, which is a development of a method of Casselman and Shalika (\cite{Cas}, \cite{CasS}), and a sufficient condition to obtain functional equations with respect to the Weyl group of $G$ (cf. \cite{JMSJ}, \cite{Hamb}, a refinement \cite{French}). 

In the present paper, we will use the above result to obtain a precise explicit formula of spherical functions of the unitary hermitian space $X$, which is a symmetric space of type $C_n$, and advance harmonic analysis on the space $X$. 
 This space $X$ is an important classical space from the view point of arithmetic of forms, and spherical functions on $X$ have a close relation to $p$-adic hermitian Siegel forms (cf. Appendix C and \cite{Oda}).
Here and henceforth we fix an unramified quadratic extension $k'$ of $k$, consider hermitian forms and unitary groups with respect to the extension $k'/k$, and denote by $A^* \in M_{n m}(k')$ the conjugate transpose of $A \in M_{m n}(k')$. We denote by $q$ the cardinality of the residue class field of $k$. We set
\begin{eqnarray*}
&&
G = U(j_{2n}) = \set{g \in GL_{2n}(k')}{g^*j_{2n}g = j_{2n}}, \quad j_{2n} = \begin{pmatrix}
0 & {} & 1\\ {} & \iddots
& {}\\1 & {} & 0
\end{pmatrix} \in GL_{2n}(k'),\\
&&
X = \set{x \in G}{x^* = x, \; \Phi_{xj_{2n}}(t)  = (t^2-1)^n }, 
\end{eqnarray*}
where $\Phi_y(t)$ is the characteristic polynomial of the matrix $y$, and $G$ acts on $X$ by
$$
g \cdot x = gxg^*, \quad (g \in G, \; x \in X).
$$

In \S \ref{sec:1}, we consider the Cartan decomposition of $X$, i.e. $K$-orbit decomposition of $X$, where $K = G \cap GL_{2n}(\calO_{k'})$, and show the following (cf. Theorem~\ref{thm: Cartan}, Proposition~\ref{Cartan n=1}, Theorem~\ref{prop: even}, Theorem~\ref{thm: G-orbits}).

\slit
\noindent
{\bf Theorem~1} (1) {\it If $k$ has odd residual characteristic, the $K$-orbit decomposition of $X$ is given by }
\begin{eqnarray} \label{intro-Cartan}
X = \bigsqcup_{\lam \in \Lam_n^+}\, K \cdot x_\lam,
\end{eqnarray}
{\it where}
\begin{eqnarray*} 
&&
x_\lam = Diag(\pi^{\lam_1}, \ldots, \pi^{\lam_n}, \pi ^{-\lam_n}, \ldots, \pi^{-\lam_1}) \in X, \\
&&
\Lam_n^+ = \set{\lam \in \Z^n}{\lam_1 \geq \lam_2 \geq \cdots \geq \lam_n \geq 0},
\end{eqnarray*}

\noindent
(2) {\it If $k$ has even residual characteristic, there exists a $K$-orbit which does not contain any diagonal matrix, for each $n \geq 1$. }

\noindent
(3) {\it There are precisely two $G$-orbits in $X$, independent of the residual characteristic of $k$.}

\slit
We introduce a typical spherical function $\omega(x; s)$ on $X$ as follows (for details, see \S2.1) :
\begin{eqnarray} \label{intro def sph}
\omega(x; s) = \int_{K}\, \prod_{i=1}^n \abs{d_i(k \cdot x)}^{s_i + \ve_i} dk, 
\end{eqnarray}
where $d_i(y)$ is the determinant of the lower right $i$ by $i$ block of $y$, \; $\ve \in \C^n$ is a certain fixed number, $dk$ is the Haar measure on $K$. 
The above integral is absolutely convergent if $\real(s_i) \geq -\real(\ve_i), \; 1 \leq i \leq n$, continued to a rational function of $q^{s_1}, \ldots, q^{s_n}$, and becomes a element of $\CKX$ for each $s \in \C^n$. 

It is convenient to introduce a new variable $z \in \C^n$ which is related to $s$ by
\begin{eqnarray} \label{intro new var}
s_i = -z_i + z_{i+1}, \; (1 \leq i \leq n), \quad s_n = -z_n,
\end{eqnarray}
and we write $\omega(x; z) = \omega(x; s)$. 
As for $z$-variable, the $\hec$-action on $\omega(x; z)$ can be written as (cf. (\ref{satake iso})) 
\begin{eqnarray}
\big( f * \omega(\; ; z)\big)(z) = \lam_z(f) \omega(x;z), \quad f \in \hec,
\end{eqnarray}
where $*$ is the convolution action of $\hec$ on $\CKX$, $\lam_z$ is the Satake transform $\hec \stackrel{\sim}{\rightarrow} \C[q^{\pm 2z_1}, \ldots, q^{\pm 2z_n}]^W$, and $W$ is the Weyl group of $G$.

Denote by $\Sigma_s^+$ (resp. $\Sigma_\ell^+$) the set of all positive short (resp. long) roots of $G$ (cf.~\eqref{short and long}).
Then we have the following (Theorem~\ref{th: feq}, Theorem~\ref{th: W-inv}):

\slit
\noindent
{\bf Theorem 2}
(1) {\it For every $\sigma \in W$, one has
$$
\omega(x; z) = \Gamma_{\sigma}(z) \cdot \omega(x; \sigma(z)),
$$
where
$$
\Gamma_\sigma(z) = \prod_{\alp \in \Sigma_s^+(\sigma)}\, \frac{1 - q^{\pair{\alp}{z}-1}}{q^{\pair{\alp}{z}}-q^{-1} }, \quad \Sigma_s^+(\sigma) = \set{\alp \in \Sigma_s^+}{-\sigma(\alp) \in\Sigma_s^+}.
$$
} 

\mslit
\noindent
(2) {\it The function $G(z) \cdot \omega(x; z)$ is contained in $\C[q^{\pm z_1}, \ldots, q^{\pm z_n}]^W (= \calR$, say$)$, where}
$$
G(z) = \prod_{\alp \in \Sigma_s^+}\, \frac{1 + q^{\pair{\alp}{z}}}{1 - q^{\pair{\alp}{z}-1}}.
$$

\slit
In \S \ref{sec:3}, we obtain the explicit formula for $\omega(x_\lam; z)$ for each $\lam \in \Lam_n^+$ (Theorem~3.1) by the method of \cite[Theorem~2.6]{French}.
Since $\omega(x; z)$ is $K$-invariant for $x$, it is enough to consider the explicit formula for $x_\lam, \; \lam \in \Lam_n^+$ by Theorem~1.

\slit
\noindent
{\bf Theorem 3}
{\it For each $\lam \in \Lam_n^+$, one has}
\begin{equation}
\omega(x_\lam; z)
 =
\frac{(1-q^{-2})^n}{w_{2n}(-q^{-1})} \cdot G(z)^{-1} \cdot c_\lam \cdot Q_\lam(z),
\end{equation}
{\it where $G(z)$ is given in Theorem 2, and} 
\begin{align*}
&
w_m(t) = \prod_{i=1}^{m}(1 - t^i), \quad 
c_\lam = (-1)^{\Sigma_i\, \lam_i(n-i+1)} q^{-\Sigma_i\,  \lam_i (n-i + \frac12)}, 
\\
&
Q_\lam(z) = 
\sum_{\sigma \in W}\, \sigma\left( q^{-\pair{\lam}{z}} c(z) \right), \quad c(z) = 
\prod_{\alp \in \Sigma_s^+}\, \frac{1 + q^{\pair{\alp}{z}-1}}{1 - q^{\pair{\alp}{z}}}
\prod_{\alp \in \Sigma_\ell^+}\, \frac{1 - q^{\pair{\alp}{z}-1}}{1 - q^{\pair{\alp}{z}}}.
\end{align*}

\bigskip
By Theorem~2, we see $Q_\lam(z)$ is a polynomial in $\calR$. On the other hand, this is a specialization of Macdonald polynomial of type $C_n$ up to scalar multiple. Here Macdonald polynomials were originally introduced by Macdonald as a unification of various families of orthogonal polynomials, such as Jack polynomials, Hall-Littlewood polynomials, Weyl characters.
It is known that the set $\set{Q_\lam(z)}{\lam \in \Lam_n^+}$ forms a $\C$-basis for $\calR$ and $Q_{\bf0}(z)$ is a constant(for details, see Remark~\ref{rem Mac} and Appendix \ref{sec:appB}). 
%
We modify $\omega(x;z)$ by
\begin{eqnarray} \label{modify-omega}
&&
\Psi(x; z) = \omega(x; z) \big{/} \omega(1_{2n}; z), 
\end{eqnarray}
which is an element of $\calR$, and
\begin{eqnarray*}
&&
\omega(1_{2n}; z) = 
\frac{(1-q^{-1})^n w_n(-q^{-1})^2}{w_{2n}(-q^{-1})} \times G(z)^{-1},\\
&&
\Psi(x_\lam; z) = \frac{(1+q^{-1})^n}{w_n(q^{-1})^2} \cdot c_\lam \cdot Q_\lam(z), \qquad (\lam \in \Lam_n^+).
\end{eqnarray*}
Employing $\Psi(x; z)$ as Kernel function, we consider the spherical Fourier transform on the Schwartz space $\SKX$ in \S 4:
\begin{equation}
\begin{array}{lccl}
F : & \SKX & \longrightarrow & \calR \nonumber\\
{} & \vphi & \longmapsto & F(\vphi) = \int_{X} \vphi(x) \Psi(x; z)dx,
\end{array}
\end{equation}
where $dx$ is a $G$-invariant measure on $X$.
We will show the following (cf.~Theorem~\ref{th: sph trans}, Corollary~\ref{cor:4.2}, Theorem~\ref{th: Plancherel}):

\slit
\noindent
{\bf Theorem 4}
(1) {\it The spherical Fourier transform $F$ is an $\hec$-module isomorphism, in particular, $\SKX$ is a free $\hec$-module of rank $2^n$.}

\medskip
\noindent
(2) {\it For each $z \in \C^n$, the set $\set{\Psi(x; z+u)}{u \in \{0, \frac{\pi\sqrt{-1}}{\log q}\}^n}$ forms a basis for the spherical functions on $X$ corresponding to $\lam_z$, where $\Psi(x; z)$ is given by (\ref{modify-omega}).}

\medskip
\noindent
(3) ({\it Plancherel formula}) {\it Set a measure $d\mu(z)$ on $\fra^*$ by}
$$
d\mu = \frac{1}{n!2^n}\cdot \frac{w_n(-q^{-1})^2}{(1+q^{-1})^n} \cdot \frac{1}{\abs{c(z)}^2} dz,\qquad \fra^* = \left\{\sqrt{-1}\left(\R \big{/} \frac{2\pi}{\log q} \Z \right) \right\}^n,
$$ 
{\it where $dz$ is the Haar measure on $\fra^*$. By an explicitly given normalization of $dx$ on $X$ one has} 
$$
\int_{X} \vphi(x) \ol{\psi(x)} dx = \int_{\fra^*}\, F(\vphi)(z) \ol{F(\psi)(z)} d\mu(z), \quad (\vphi, \; \psi \in \SKX).
$$

\bigskip
In \cite{Oda}, we have investigated spherical functions on a similar space $X_T$ associated with each nondegenerate hermitian matrix $T$, and obtained functional equations of hermitian Siegel series as an application. Both spaces, $X_T$ and the present $X$, are isomorphic to $U(2n)/U(n)\times U(n)$ over the algebraic closure of $k$, and the former realization was useful for the application to hermitian Siegel series. But it was not easily understandable, and we could not obtain its Cartan decomposition, nor complete parametrization of spherical functions. 
We discuss the correspondence between both spaces in Appendix \ref{sec:appC}, and see many results on the present space $X$ are inherited to the former spaces $X_T$. Further applications for the theory of automorphic forms will be expected.

\bigskip
Throughout of this article except Appendix \ref{sec:appA}, where we explain about unitary hermitian matrices in a general setting, we denote by $k$ a non-archimedian local field of characteristic $0$, fix an unramified quadratic extension $k'$ and consider unitary and hermitian matrices with respect to $k'/k$.
We fix a prime element $\pi$ of $k$, denote by $v_\pi(\; )$ the additive value on $k$, and normalize the absolute value $\abs{\; }$ on $k^\times$ by $\abs{\pi}^{-1} = q = \sharp(\calO_k/(\pi))$. 
We also fix a unit $\eps \in \calO_k^\times$ for which $k' = k(\sqrt{\eps})$. We may take $\eps$ such as $\eps - 1 \in 4\calO_k^\times$, so that $\{1, \, \frac{1+\sqrt{\eps}}{2} \}$ forms an $\calO_k$-basis for $\calO_{k'}$ (cf.~\cite[64.3 and 64.4]{Ome}). From \S \ref{sec:2} to \S \ref{sec:4}, we assume that $q$ is odd.


\bigskip
Acknowledgment: The authors would like to express their thanks to the reviewers for their careful reading of the manuscript.  
 
\vspace{2cm}

\section{The space $X$ and its $K$-orbit decomposition and $G$-orbit decomposition}
\label{sec:1}

Let $k'$ be an unramified quadratic extension of a $\frp$-adic field $k$ and consider hermitian matrices and unitary matrices with respect to $k'/k$. For a matrix $A \in M_{mn}(k')$, we denote by $A^* \in M_{nm}(k')$ its conjugate transpose with respect to $k'/k$, and say $A$ is {\it hermitian} if $A^* = A$.

We consider the unitary group
\begin{equation*}
G = G_n = \set{g \in GL_{2n}(k')}{g^*j_{2n}g = j_{2n}}, \qquad j_{2n} = \begin{pmatrix}
0 & {} & 1\\ {} & 
\iddots
& {}\\1 & {} & 0
\end{pmatrix} \in M_{2n},
\end{equation*}
the space $X$ of unitary hermitian matrices in $G$
\begin{equation} \label{space X}
X = X_n  = \set{x \in G}{x^* = x, \; \Phi_{xj_{2n}}(t) = (t^2-1)^n},
\end{equation}
and a supplementary space  $\wt{X}$ containing $X$ 
\begin{equation*}
\wt{X} = \wt{X}_n = \set{x \in G}{x = x^*},
\end{equation*}
where $\Phi_y(t)$ is the characteristic polynomial of the matrix $y$. 
It should be noted that \eqref{space X} implies $\det x=1$.
The group $G$ acts on $X$ and $\wt{X}$ by
$$
g \cdot x = gxg^* = x[g^*] = gxj_{2n}g^{-1}j_{2n}, \quad g \in G, \; x \in \wt{X}.
$$
As is explained in Appendix \ref{sec:appA}, we may understand $X$ as the set of $k$-rational points of a $G(\ol{k})$-homogeneous algebraic set $X(\ol{k})$ defined over $k$, where $\ol{k}$ is the algebraic closure of $k$. 

We fix a compact subgroup $K$ of $G$ by 
\begin{equation*} \label{def of K}
K = K_n = G \cap M_{2n}(\calO_{k'}),
\end{equation*}
which is maximal compact (cf.~\cite[\S9]{Satake}).
The main purpose of this section is to give the Cartan decomposition of $X$, i.e., the $K$-orbit decomposition of $X$ for odd $q$ (Theorem~\ref{thm: Cartan}), and $G$-orbit decomposition of $X$ (Theorem~\ref{thm: G-orbits}).

To start with, we recall the case of unramified hermitian matrices. The group $G_0 = GL_n(k')$ acts on the space $\calH_n(k') = \set{y \in G_0}{y^* = y}$ by $g \cdot y = gyg^*$, and there are two $G_0$-orbits in $\calH_n(k')$ determined by the parity of $v_\pi(\det(y))$. Setting $K_0 = GL_n(\calO_{k'})$, the Cartan decomposition is known (cf.~\cite{Jac}) as follows:
\begin{equation} \label{herm}
\calH_n(k') = \bigsqcup_{\lam \in \Lam_n}\, K_0 \cdot \pi^\lam,
\end{equation}
where
$$
\pi^\lam = Diag(\pi^{\lam_1}, \ldots, \pi^{\lam_n}), \quad
\Lam_n = \set{\lam \in \Z^n}{\lam_1 \geq \lam_2 \geq \cdots \geq \lam_n}. 
$$

\medskip
\begin{thm} \label{thm: Cartan}
Assume that $k$ has odd residual characteristic. Then, the $K$-orbit decomposition of $X_n$ is given as follows:
\begin{equation*} \label{K-orbits}
X_n = \bigsqcup_{\lam \in \Lam_n^+}\, K \cdot x_\lam,
\end{equation*}
where 
\begin{eqnarray*}
&&
\Lam_n^+ = \set{\lam \in \Z^n}{\lam_1 \geq \cdots \geq \lam_n \geq 0},\\
&&
x_\lam 
= Diag(\pi^{\lam_1}, \ldots, \pi^{\lam_n}, \pi^{-\lam_n}, \cdots, \pi^{-\lam_1}).
\end{eqnarray*}
\end{thm}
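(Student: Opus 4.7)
The plan is to combine the classical hermitian Cartan decomposition \eqref{herm} with an induction on $n$.

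\emph{Disjointness.} Since $K \subset K_0 := GL_{2n}(\calO_{k'})$, any $K$-equivalence $k \cdot x_\lam = x_\mu$ is in particular a $K_0$-equivalence in $\calH_{2n}(k')$. The diagonal sequence $(\lam_1, \ldots, \lam_n, -\lam_n, \ldots, -\lam_1)$ of $x_\lam$ already lies in $\Lam_{2n}$ because $\lam \in \Lam_n^+$, so the uniqueness in \eqref{herm} forces $\lam = \mu$.

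\emph{Surjectivity.} Given $x \in X_n$, \eqref{herm} produces $k_0 \in K_0$ with $k_0 x k_0^* = \pi^\mu$ for some $\mu \in \Lam_{2n}$. The defining relation $xj_{2n}x = j_{2n}$, rewritten as $x^{-1} = j_{2n} x j_{2n}$ with $j_{2n} \in K_0$, shows that $x$ and $x^{-1}$ are $K_0$-equivalent. Combining $\mu(x) = \mu(x^{-1})$ with the general identity $\mu(x^{-1})_i = -\mu(x)_{2n+1-i}$ forces $\mu_{2n+1-i} = -\mu_i$, whence $\pi^\mu = x_\lam$ for a unique $\lam \in \Lam_n^+$ (the inequality $\mu_n \geq \mu_{n+1} = -\mu_n$ forcing $\mu_n \geq 0$). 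So $k_0 \cdot x = x_\lam$ is already realized by some $k_0 \in K_0$, and it remains to refine $k_0$ into an element of $K$.

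I would carry out this refinement by induction on $n$; the base case $n=1$ is Proposition~\ref{Cartan n=1}. For $n \geq 2$ the goal is to produce $k_1 \in K$ with
\begin{equation*}
k_1 \cdot x \;=\; \mathrm{Diag}(\pi^{\lam_1},\, x',\, \pi^{-\lam_1}), \qquad x' \in X_{n-1},
\end{equation*}
after which the inductive hypothesis applied to $x'$ completes the proof (a direct block computation confirms that the middle block satisfies $x' j_{2n-2} x' = j_{2n-2}$). The construction of $k_1$ proceeds by locating a primitive, $j_{2n}$-isotropic vector $v \in L_0 := \calO_{k'}^{2n}$ for which $xv \in \pi^{-\lam_1} L_0 \smallsetminus \pi^{-\lam_1+1} L_0$, transporting $v$ to $e_{2n}$ within $K$ via an integral form of Witt's extension theorem, and then exploiting $x = x^*$ together with $xj_{2n}x = j_{2n}$ to force the last column and row of $k_1 \cdot x$ into the shape $\pi^{-\lam_1} e_{2n}$; the self-duality then yields the desired block decomposition.

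The main obstacle is precisely this last construction: producing the primitive $j_{2n}$-isotropic vector with the correct $\pi$-denominator, and carrying out the Witt-type extension within $K$ rather than the ambient $K_0$. Both ingredients hinge on $q$ being odd, since the reduction modulo $\pi$ of the split hermitian form $j_{2n}$ on $(\calO_{k'}/\pi)^{2n}$ is non-degenerate with plentiful isotropic vectors exactly in odd residual characteristic. This is the very mechanism whose failure in residual characteristic $2$ produces the obstruction recorded in part (2) of Theorem~1 of the introduction.
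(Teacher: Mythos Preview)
Your disjointness argument via the embedding $K\subset GL_{2n}(\calO_{k'})$ and \eqref{herm} is correct, and the observation that the $K_0$-elementary divisors of any $x\in X_n$ already have the symmetric shape $(\lam_1,\ldots,\lam_n,-\lam_n,\ldots,-\lam_1)$ is a clean way to pin down the target $\lam$ in advance; the paper does not isolate this step.

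The inductive step, however, is where the content lies, and here your sketch has a genuine gap. First, the condition you impose on $v$ is not the right one: requiring $xv\in\pi^{-\lam_1}L_0\smallsetminus\pi^{-\lam_1+1}L_0$ only forces some entry of the last \emph{column} of $k_1\cdot x$ to have valuation $-\lam_1$, whereas the block--splitting you describe (clearing the last row and column using $x=x^*$ and $xj_{2n}x=j_{2n}$) needs the $(2n,2n)$-entry itself to be minimal, i.e.\ $v_\pi(v^*xv)=-\lam_1$. Second, even with the corrected condition, the existence of such a $v$ is not automatic. When every minimal entry of $x$ lies on the anti-diagonal one finds $\ell(x)=0$ and $\bar x:=x\bmod\pi$ anti-diagonal with entries $\pm1$; if $\bar x=\pm j_{2n}$ then \emph{every} $j_{2n}$-isotropic $v$ satisfies $v^*xv\equiv 0\pmod\pi$ and no suitable $v$ exists. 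Ruling this out requires the characteristic-polynomial constraint $\Phi_{xj_{2n}}(t)=(t^2-1)^n$ that distinguishes $X_n$ from $\wt X_n$, and you never invoke it. You correctly intuit that odd $q$ is the hinge, but the mechanism is this exclusion, not merely the non-degeneracy of $j_{2n}$ modulo $\pi$.

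The paper's argument is organized differently: rather than seeking an abstract isotropic $v$ and a Witt extension inside $K$, it performs an explicit case analysis on the position of a minimal entry of $x$ (diagonal, off both diagonals, or anti-diagonal only) and in each case writes down a concrete element of $K$ that strips off one or two blocks (Lemmas~\ref{lem 2-1}--\ref{lem 2-4}). The anti-diagonal case, Lemma~\ref{lem 2-4}, is precisely where the characteristic polynomial is used to exclude $x\equiv\pm j_{2n}$. Your strategy can be completed --- correct the condition on $v$, verify existence case by case, and prove the integral Witt extension for $K$ --- but carrying out those three steps amounts to redoing the paper's lemmas in different clothing.
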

 
\medskip
For $a = (a_{ij}) \in M_{2n}(k')$, we set
\begin{eqnarray*}
-\ell(a) = \min\set{v_\pi(a_{ij})}{1 \leq i, j \leq 2n},
\end{eqnarray*}
and say an entry of $a$ to be {\it minimal} if its $v_\pi$-value is $-\ell(a)$.
For $g \in G$, we see $\ell(g) \geq 0$, since $v_\pi(\det(g)) = 0$.
For regularization of $x \in \wt{X}$, we often use elements in $K$ of the following type:
\begin{eqnarray}
&&
\twomatrix{h}{0}{0}{j_{n}h^{* -1}j_{n}}, \quad \mbox{for } h \in K_0, \nonumber \\
&& \label{K-elements}
\twomatrix{1_n}{aj_n}{0}{1_n}, \quad \mbox{for } a \in M_n(\calO_{k'}), \; a + a^* = 0.
\end{eqnarray}

\medskip
\begin{prop} \label{Cartan n=1}
Let $n = 1$. Then 
\begin{eqnarray*}
&& 
X_1 = \displaystyle{\bigsqcup_{\ell \geq 0}}\, K_1 \cdot \twomatrix{\pi^\ell}{0}{0}{\pi^{-\ell}} \sqcup \displaystyle{\bigsqcup_{1 \leq r \leq v_\pi(2)}}\, K_1 \cdot \twomatrix{\pi^{-r}(1-\eps)}{-\sqrt{\eps}}{\sqrt{\eps}}{\pi^r},
%
\end{eqnarray*}
where the latter union is empty if $q$ is odd.
\end{prop}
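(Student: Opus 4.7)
The plan is to parametrize $X_1$ explicitly and then reduce any element to one of the listed representatives by successive $K_1$-actions.

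First I would write $x = \twomatrix{a}{b}{b'}{c}$ and unpack the three defining conditions. Hermiticity gives $a, c \in k$ and $b' = \ol{b}$; the characteristic polynomial condition $\Phi_{xj_2}(t) = t^2 - 1$ yields both $b + \ol{b} = 0$ (so $b = t\sqrt{\eps}$ for some $t \in k$) and $\det(xj_2) = -1$, which rearranges to $ac + t^2\eps = 1$. Thus $X_1$ is in bijection with $\{(a, t, c) \in k^3 \,:\, ac + t^2\eps = 1\}$.

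Next I would work out how the standard generators of $K_1$ act on $(a,t,c)$: the diagonal torus $\twomatrix{u}{0}{0}{\ol{u}^{-1}}$, $u \in \calO_{k'}^\times$, sends $(a,t,c) \mapsto (N(u)a, t, N(u)^{-1}c)$ (and $N_{k'/k}: \calO_{k'}^\times \to \calO_k^\times$ is surjective because $k'/k$ is unramified); the Weyl element $j_2$ sends $(a,t,c) \mapsto (c,-t,a)$; the upper unipotent of \eqref{K-elements} with parameter $s\sqrt{\eps}$, $s \in \calO_k$, sends $(a,t,c) \mapsto (a - 2st\eps - s^2\eps c,\; t + sc,\; c)$, with the lower unipotent symmetric. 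A routine entrywise bound shows $\ell(\cdot)$ is $K_1$-invariant (each entry of $g x g^*$ has $v_\pi \geq -\ell(x)$, with the reverse inequality via $g^{-1} \in K_1$), so the values $\ell(x_\ell) = \ell$ and $\ell(y_r) = 0$ already separate most of the claimed orbits from one another.

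For the main reduction, given $(a,t,c)$ I would first apply $j_2$ so that $v_\pi(a) \leq v_\pi(c)$. If $v_\pi(a) \leq 0$, examining valuations in $ac + t^2\eps = 1$ gives $v_\pi(t) \geq v_\pi(a)$, so the lower unipotent with parameter $s' = t/a \in \calO_k$ kills $t$, leaving $\mathrm{Diag}(a, 1/a)$. The diagonal torus then normalizes the unit of $a$ to $1$, yielding $\mathrm{Diag}(\pi^{v_\pi(a)}, \pi^{-v_\pi(a)})$, and a $j_2$ swap (applied if $v_\pi(a) < 0$) produces $x_\ell$ with $\ell = |v_\pi(a)| \geq 0$.

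The remaining case is $v_\pi(a), v_\pi(c) > 0$, in which $t^2\eps \equiv 1 \pmod \pi$. For odd $q$, this forces $\eps$ to be a square mod $\pi$, contradicting its role as a non-square unit defining the unramified quadratic extension; so the case is vacuous, which simultaneously gives the $n=1$ instance of Theorem~\ref{thm: Cartan} and the emptiness of the second union in the proposition. For even $q$, the standing hypothesis $\eps - 1 \in 4\calO_k^\times$ makes the congruence consistent; after normalizing $t \equiv 1 \pmod \pi$ by unipotents, one obtains $v_\pi(1 - t^2\eps) = 2v_\pi(2)$ and hence $v_\pi(a) + v_\pi(c) = 2v_\pi(2)$, so representatives arise for $r = \min(v_\pi(a), v_\pi(c)) \in \{1, \ldots, v_\pi(2)\}$, giving the $y_r$. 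The hard part will be showing the $y_r$ for distinct $r$ genuinely lie in distinct $K_1$-orbits, since $\ell$ gives no distinction here; this requires a finer invariant, which I would obtain by tracking the $\pi$-adic precision at which $xj_2$ admits a $K_1$-diagonalization.
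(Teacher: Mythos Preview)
Your treatment of the odd residual case is correct and follows the same line as the paper: parametrize $X_1$ by triples $(a,t,c)$ with $ac+t^2\eps=1$, use the $K_1$-invariant $\ell(x)$ to separate the diagonal orbits, and observe that $v_\pi(a),v_\pi(c)>0$ forces $\eps$ to be a square modulo $\pi$, which is impossible.

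The even case, however, has a genuine gap. Your claim that ``after normalizing $t\equiv 1\pmod{\pi}$ by unipotents, one obtains $v_\pi(1-t^2\eps)=2v_\pi(2)$'' is unjustified and in general false. Write $e=v_\pi(2)$ and $t=1+\pi^j u$ with $u\in\calO_k^\times$; expanding $1-t^2\eps$ with $\eps=1+4w$ gives $-4w-2\pi^j u\,\eps-\pi^{2j}u^2\eps$, and for $j<e$ the term $\pi^{2j}u^2\eps$ dominates, so $v_\pi(1-t^2\eps)=2j<2e$. The congruence $t\equiv 1\pmod{\pi}$ is automatic here and does not control $j$; unipotents let you alter $t$ only modulo $\pi^{\min(v_\pi(a),v_\pi(c))}$, and that minimum can be strictly less than $e$, so you cannot simply force $j\ge e$. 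Even granting a valuation bound, you have not carried out the reduction of $(a,t,c)$ to the specific representative $y_r$. The paper avoids this difficulty by writing the off-diagonal entry $\beta$ (which satisfies $\beta\equiv 1\pmod{\pi}$) as $1+\pi^m\gamma$ with $\gamma\in\calO_{k'}$ and $0\le m\le r=v_\pi(c)$; the trace constraint $\beta+\ol\beta=0$ becomes $\pi^m(\gamma+\gamma^*)=-2$, forcing $m\le e$, while the determinant constraint $N(\beta)-ac=-1$ gives $\pi^{2m}N(\gamma)=\pi^{2r}a'$, forcing $m=r$. Hence $r\le e$, and a single further unipotent produces $y_r$.

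For the orbit separation you flagged as ``the hard part'', your proposed invariant is vague, whereas the paper's argument is short and concrete: if $k\cdot y_r=1_{2}$ for some $k\in K_1$ then $ky_r=j_2kj_2$; since $y_r\equiv j_2\pmod{\pi}$, reducing modulo $\pi$ gives $k\equiv j_2k$, and comparing rows forces $\det k\equiv 0\pmod{\pi}$, contradicting $k\in K_1$. The separation of $y_r$ from $y_s$ for $r\ne s$ is handled by the same device.
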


\begin{proof}
For $x = \twomatrix{a}{\beta}{\beta^*}{c} \in \wt{X}_1$, we have
$$
j_2 = j_2[x] = \twomatrix{a(\beta + {\beta}^*)}{ac + \beta^2}{ac + {\beta}^{*2}}{c(\beta+{\beta}^*)}.
$$ 
If $a = 0$ or $c = 0$, we have $\beta = \pm 1$, hence $a = c = 0$ and $x = \pm j_2 \notin X_1$.
Hence $ac \ne 0$ for $x \in X_1$, and we may assume
$$
x = \twomatrix{a}{b\sqrt{\eps}}{-b\sqrt{\eps}}{c}, \quad \begin{array}{l}
a,b,c \in k, \; ac + b^2\eps = 1,\\
v_\pi(a) \geq v_\pi(c), \; c \mbox{\, is a power of $\pi$}.
\end{array}
$$
If $v_\pi(c) = -\ell \leq 0$, then $v_\pi(b) \geq -\ell$, and  
$$
K \cdot x \ni \twomatrix{1}{-b\pi^\ell \sqrt{\eps}}{0}{1} \cdot \twomatrix{a}{b\sqrt{\eps}}{-b\sqrt{\eps}}{\pi^{-\ell}} = \twomatrix{\pi^\ell}{0}{0}{\pi^{-\ell}} \in X_1,
$$
where each $\ell$ gives different $K_1$-orbit. 

Next assume $v_\pi(c) = r > 0$, then $b \in \calO_k^\times$, $b^2 \eps \equiv 1\pmod{(\pi^2)}$ and 
$x \in K$.
Thus $q$ is even, $b \equiv 1 \pmod{(\pi)}$ and $x \equiv j_2 \pmod{(\pi)}$, since $\eps \in 1 + 4\calO_k^\times$. 
We may rewrite
$$
x = \twomatrix{\pi^r a'}{1+\pi^m\gamma}{1+\pi^m\gamma^*}{\pi^r}, \quad \begin{array}{l}
a' \in \calO_k, \; 0 \leq m \leq r, \\
\gamma \in \calO_{k'}, \; \gamma \in \calO_{k'}^\times \mbox{ if } m<r.
\end{array}
$$
Since $\Phi_{xj_2}(t) = t^2 - 1$, we have
$$
\pi^m(\gamma + \gamma^*) + 2 = 0, \quad \pi^{2m}N(\gamma) = \pi^{2r}a'.
$$
By the latter equation we have $m = r$, and setting $\gamma = b_0 + b_1 \frac{1+\sqrt{\eps}}{2}$, we have 
$$
1 + \pi^r\gamma = -(1 + \pi^r b_0)\sqrt{\eps}.
$$
Thus, we obtain
$$
K \cdot x \ni \twomatrix{1}{b_0 \sqrt{\eps}}{0}{1} \cdot x = 
\twomatrix{\pi^{-r}(1-\eps)}{-\sqrt{\eps}}{\sqrt{\eps}}{\pi^r} (= x_r, \mbox{say}), \; 1 \leq r \leq v_\pi(2).
$$
If $K \cdot x_r$ contained a diagonal matrix it must be $1_2$, and $kx_r = j_2kj_2$ for some $k \in K$. By the latter equation we get $\det(k) \equiv 0 \pmod{(\pi)}$, which is impossible for $k \in K$.
Similarly we may prove $x_r \notin K \cdot x_s$ if $r \ne s$, which completes the proof.
\end{proof}

\begin{lem}   \label{lem 2-1}
Let $n \geq 2$ and assume that $x \in \wt{X}_n$ has a minimal entry in the diagonal. Then $K \cdot x$ contains a hermitian matrix of type
$$
\left( \begin{array}{c|c|c}
\pi^\ell & 0 & 0\\ 
\hline
0 & y & 0\\  
\hline
0 & 0 & \pi^{-\ell}
\end{array} \right), \qquad y \in \wt{X}_{n-1} \cap M_{2n-2}(\pi^{-\ell}\calO_{k'}), 
$$
where $\ell = \ell(x)$. If $x \in X_n$, then the above $y \in X_{n-1}$.
\end{lem}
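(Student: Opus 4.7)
The plan is to construct an explicit sequence of $K$-elements reducing $x$ to the claimed block-diagonal form. Write $\ell = \ell(x)$ and suppose $x_{i_0, i_0} = u\pi^{-\ell}$ with $u \in \calO_k^\times$. First, conjugate by the permutation matrix $P_\sigma$ corresponding to the transposition $\sigma \in S_{2n}$ that swaps $i_0 \leftrightarrow 2n$ and $2n+1-i_0 \leftrightarrow 1$; since $\sigma$ preserves the pairing $j \leftrightarrow 2n+1-j$, we have $P_\sigma^* j_{2n} P_\sigma = j_{2n}$, so $P_\sigma \in K$, and after this conjugation the minimal entry sits at position $(2n, 2n)$.

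Next I clear the remaining off-diagonal entries of the last column. For each $i = 2, \ldots, 2n-1$, set $c_i = -u^{-1}\pi^\ell\, x_{i, 2n} \in \calO_{k'}$ (integrality follows from $v_\pi(x_{i, 2n}) \geq -\ell$) and
\[
g_i \;=\; I + c_i\, e_i e_{2n}^T \;-\; \ol{c_i}\, e_1 e_{2n+1-i}^T,
\]
where $e_j$ denotes the $j$-th standard column vector. A direct check shows that the nilpotent $M_i := g_i - I$ satisfies $M_i^2 = 0$, $j_{2n} M_i + (j_{2n} M_i)^* = 0$, and $M_i^* j_{2n} M_i = 0$; hence $g_i^{-1} = I - M_i$ is integral and $g_i^* j_{2n} g_i = j_{2n}$, so $g_i \in K$. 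The conjugation $g_i x g_i^*$ alters only entries indexed by rows or columns in $\{1, i\}$; in particular it replaces $x_{i, 2n}$ by $x_{i, 2n} + c_i u\pi^{-\ell} = 0$ and leaves $x_{j, 2n}$ unchanged for $j \notin \{1, i\}$. Applying $g_2, g_3, \ldots, g_{2n-1}$ in sequence therefore kills $x_{i, 2n}$ for every $i \in \{2, \ldots, 2n-1\}$. The mirrored second term in $g_i$ --- required for $g_i \in K$ --- is the device that makes this $p$-adic Gaussian elimination respect the unitary form, and finding the right symmetrized form is the main technical obstacle.

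Third, I exploit the identity $x'j_{2n}x' = j_{2n}$ to extract the block structure. Now only $(1, 2n) = b$ and $(2n, 2n) = u\pi^{-\ell}$ remain nonzero in the last column of $x'$; evaluating $(x' j_{2n} x')_{p, 2n} = \delta_{p, 1}$ at $p = 2, \ldots, 2n$ forces $x'_{p, 1} = 0$ for $2 \leq p \leq 2n - 1$ and $b + \ol{b} = 0$. A final operation $h = I + c\, e_1 e_{2n}^T$ with $c = -u^{-1}\pi^\ell b$ then clears $(1, 2n)$ (and, by Hermiticity, $(2n, 1)$); the constraint $c + \ol{c} = 0$ required for $h \in K$ is automatic from $b + \ol{b} = 0$. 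The remaining relation at $(1, 2n)$ yields $x'_{1, 1} = u^{-1}\pi^\ell$, so $x'$ equals $Diag(u^{-1}\pi^\ell, y, u\pi^{-\ell})$ for some Hermitian $y \in M_{2n-2}(k')$.

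Finally, since $k'/k$ is unramified the norm $N : \calO_{k'}^\times \to \calO_k^\times$ is surjective; pick $v \in \calO_{k'}^\times$ with $N(v) = u$ and conjugate by $Diag(v, I_{2n-2}, \ol{v}^{-1}) \in K$, which sends the corner entries to $\pi^\ell$ and $\pi^{-\ell}$ respectively while fixing $y$. The block form of $m^* j_{2n} m = j_{2n}$ for $m = Diag(\pi^\ell, y, \pi^{-\ell})$ gives $y^* j_{2n-2} y = j_{2n-2}$, so $y \in \wt{X}_{n-1}$; expanding $\Phi_{mj_{2n}}(t) = (t^2 - 1)\,\Phi_{yj_{2n-2}}(t)$ shows that if $x \in X_n$ then $y \in X_{n-1}$; and $y \in M_{2n-2}(\pi^{-\ell}\calO_{k'})$ follows from the $K$-invariance of $\ell(\cdot)$.
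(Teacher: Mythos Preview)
Your proof is correct and follows essentially the same strategy as the paper's: move the minimal diagonal entry to the $(2n,2n)$ corner by a Weyl-group element, clear the last column by integral unitary transvections, and then read off the block structure from $x' j_{2n} x' = j_{2n}$. The differences are only in bookkeeping. The paper uses the two ready-made families of $K$-elements in \eqref{K-elements} together with the hermitian Cartan decomposition \eqref{herm}: the block element $\mathrm{diag}(h,\,j_n h^{*-1}j_n)$ with $h\in K_0$ simultaneously normalises the corner to $\pi^{-\ell}$ and clears the lower half of the last column, and the upper-unipotent $\left(\begin{smallmatrix}1_n & A\\ 0 & 1_n\end{smallmatrix}\right)$ clears the upper half while the free anti-hermitian parameter $a$ in $A$ is chosen so that the surviving $(1,2n)$-entry $b$ has $b_1=0$ or $b_1\notin 2\calO_k$; then $b+b^*=0$ forces $b=0$ outright. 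Your version instead builds explicit root-type unipotents $g_i$ to kill each $(i,2n)$ individually, removes the residual $b$ with one further $K$-element after observing $b+\ol b=0$, and normalises the corner unit at the end via surjectivity of $N:\calO_{k'}^\times\to\calO_k^\times$. Both routes yield the same reduction; yours is slightly more elementary and self-contained, the paper's is a bit shorter by leaning on \eqref{herm}.
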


\begin{proof}
By the action of $W$, we may assume the $(2n, 2n)$-entry is minimal.
Then, by (\ref{herm}) and (\ref{K-elements}), we see there is some $x' \in K\cdot x$ whose lower right $n$ by $n$ block has the form
$$
\left( \begin{array}{c|c}
{} & 0\\
*  & \vdots\\
{} & 0\\
\hline
0 \cdots 0 & \pi^{-\ell} \end{array} \right). 
$$
Then, by taking a suitable matrix of type 
$$
h = \twomatrix{1_n}{A}{0}{1_n} \in K, \quad 
A = \twomatrixplus {-{a_n}^*\, \cdots\,  {-a_2}^*}{a}{0}{\begin{array}{c}a_2\\\vdots\\a_n\end{array}} \in M_n(k'), \quad a + {a}^* = 0,
$$
$h\cdot x'$ becomes the following form
\begin{equation} \label{lem1 mid}
\left( \begin{array}{c|ccc|c}
{c} & {c_2} & {\cdots} & {c_{2n-1}} & b\\
\hline
{{c_2}^*} & {} & {} & {} & 0\\
{\vdots} & {} & {y} & {} & \vdots\\
{{c_{2n-1}}^*} & {} & {} & {} & 0\\
\hline
b^* & 0 & \cdots & 0 & \pi^{-\ell} \end{array} \right) \in \wt{X}_n, \quad 
\begin{array}{l}
b = \pi^{-\ell}(b_0 + b_1\frac{1+\sqrt{\eps}}{2}), \; \mbox{with }\\
b_0, b_1 \in \calO_k, \; b_1 = 0 \mbox{ or }b_1 \notin 2\calO_k,\\
c \in \pi^{-\ell} \calO_k, \; c_i \in \pi^{-\ell}\calO_{k'}.
\end{array}
\end{equation}
Since $j_{2n}[h\cdot x'] = j_{2n}$, we have
$$
b = 0, \; c = \pi^\ell, \; c_i = 0 \; (2 \leq i \leq 2n-1),
$$
and then $y \in \wt{X}_{n-1}$, and it is clear that $y \in X_{n-1}$ if $x \in X_{n}$, which completes the proof. 
\end{proof}

\begin{lem}   \label{lem 2-2}
Let $n \geq 2$ and assume that $x \in \wt{X}_n$ has a minimal entry outside of the diagonal and the anti-diagonal. Then $K \cdot x$ contains a hermitian matrix of type
$$
\left( \begin{array}{c|c|c}
\twomatrixminus{\pi^\ell}{0}{0}{\pi^\ell} & 0 & 0\\
\hline 
0 & y \vphantom{A^{A^n}_{A_h}\;}& 0\\
\hline  
0 & 0 & \twomatrixminus{\pi^{-\ell}}{0}{0}{\pi^{-\ell}}
\end{array}\right), \qquad y \in \wt{X}_{n-2} \cap M_{2n-4}(\pi^{-\ell}\calO_{k'}), 
$$
where $\ell = \ell(x)$. If $x \in X_n$, the the above $y \in X_{n-2}$.
\end{lem}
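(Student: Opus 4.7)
The plan is to mimic the proof of Lemma~\ref{lem 2-1}, upgrading every step from the $1\times 1$ to the $2\times 2$ block level. By the action of the Weyl group $W$ (embedded in $K$ as signed permutations preserving $j_{2n}$), I first move the hypothesized off-diagonal, off-anti-diagonal minimal entry of $x$ to the position $(2n-1,2n)$; this is legitimate because $(2n-1)+2n = 4n-1 \neq 2n+1$ for $n \ge 2$. Hermiticity of $x$ then forces the entry at $(2n,2n-1)$ to be minimal as well, so the lower-right $2\times 2$ block of $x$ is a hermitian matrix whose two off-diagonal entries both have valuation $-\ell$.

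Second, I apply the Cartan decomposition~\eqref{herm} of hermitian matrices to the lower-right $n\times n$ block $C$ of $x$, implemented inside $K$ via block-diagonal elements of the form $\mathrm{diag}(h,\,j_n h^{*-1}j_n)$ with $h\in K_0$, to bring $C$ to the shape $\begin{pmatrix} * & 0 \\ 0 & \pi^{-\ell} I_2 \end{pmatrix}$ with an $(n-2)\times(n-2)$ upper-left block. This rests on the claim that the two smallest Cartan invariants of $C$ both equal $-\ell$: the $2\times 2$ minor of $C$ at rows and columns $\{n-1,n\}$ equals $C_{n-1,n-1}C_{n,n} - N(C_{n-1,n})$, in which $N(C_{n-1,n})$ has valuation $-2\ell$ while $C_{n-1,n-1}C_{n,n}$ has valuation $\ge -2\ell$; cancellation can occur only when $C_{n-1,n-1}$ and $C_{n,n}$ are simultaneously minimal, in which case $x$ has a minimal diagonal entry and Lemma~\ref{lem 2-1} can be applied twice in succession to yield the conclusion directly.

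Third, as in the proof of Lemma~\ref{lem 2-1}, I conjugate by an element of the type~\eqref{K-elements} with $a \in M_n(\calO_{k'})$, $a + a^* = 0$ chosen to clear the last two columns of the upper-right $n\times n$ block. Writing the resulting matrix in the block form analogous to~\eqref{lem1 mid} and imposing the defining equation $j_{2n}[h\cdot x'] = j_{2n}$ yields a system of quadratic identities that force the remaining entries in those columns to vanish and pin down the upper-left $2\times 2$ block as $\pi^\ell I_2$. The residual central block $y$ then lies in $\wt{X}_{n-2}\cap M_{2n-4}(\pi^{-\ell}\calO_{k'})$, and in $X_{n-2}$ whenever $x\in X_n$. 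The main obstacle I expect is verifying the Cartan claim in step two, together with carefully tracking the quadratic $j_{2n}$-identities in step three; beyond that the argument is a straightforward $2\times 2$ analogue of Lemma~\ref{lem 2-1}.
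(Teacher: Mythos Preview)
Your proposal is correct and follows essentially the same route as the paper: move the minimal off-diagonal, off-anti-diagonal entry to the $(2n-1,2n)$/$(2n,2n-1)$ pair via $W$, use the hermitian Cartan decomposition~\eqref{herm} together with the $K$-elements~\eqref{K-elements} to put the lower-right $n\times n$ block into the shape $\begin{pmatrix} * & 0 \\ 0 & \pi^{-\ell}I_2\end{pmatrix}$, and then repeat the clearing-and-$j_{2n}$-constraint step of Lemma~\ref{lem 2-1} twice. Your discussion of the $2\times 2$ minor and the possible cancellation goes beyond what the paper writes; note however that the fallback ``apply Lemma~\ref{lem 2-1} twice'' is not fully justified as stated, since after one application the middle block $y$ need not retain a diagonal entry of valuation exactly $-\ell$, so a second application with the same $\ell$ is not automatic---but the paper's own proof is equally terse on this point.
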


\begin{proof}
By the assumption, the minimal entries appear in pair not in the anti-diagonal. Then, by the action of $W$, we may assume the $(2n,2n-1)$-entry and the $(2n-1, 2n)$-entry are minimal. 
Then, by (\ref{herm}) and (\ref{K-elements}), we see there is some $x' \in K\cdot x$ whose lower right $n$ by $n$ block has the form
$$
\left( \begin{array}{c|cc}
{} & 0 & 0\\
*  & \vdots &\vdots \\
{} & 0 & 0\\
\hline
0 \cdots 0 & {\pi^{-\ell}} & {0}\\
0 \cdots 0 & 0 & {\pi^{-\ell}} \end{array} \right). 
$$
Taking the similar procedure (twice) to the proof of Lemma~\ref{lem 2-1}, we obtain a matrix of the required form.
\end{proof}

\begin{lem}   \label{lem 2-3}
Let $x \in \wt{X}_n$ with $n \geq 2$. Assume that any minimal entry of $x \in \wt{X}_n$ stands in the anti-diagonal but not all the entries of the anti-diagonal are minimal. Then $K \cdot x$ contains a hermitian matrix of the same type as in Lemma~\ref{lem 2-2}.
\end{lem}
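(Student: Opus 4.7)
The plan is to use the non-minimality of some anti-diagonal entry to manufacture a minimal entry lying outside both the diagonal and the anti-diagonal, then invoke Lemma~\ref{lem 2-2}. Set $\ell = \ell(x)$ and pick indices $c_0, c_1$ with $v_\pi(x_{c_0, 2n+1-c_0}) = -\ell$ and $v_\pi(x_{c_1, 2n+1-c_1}) > -\ell$; both exist by the two hypotheses. Since $x$ is hermitian, entries at transposed positions have equal $v_\pi$-values, so $\{c_1, 2n+1-c_1\}$ cannot meet $\{c_0, 2n+1-c_0\}$ (otherwise a minimal and a non-minimal entry would coincide or be complex conjugates). Hence the four indices $c_0, 2n+1-c_0, c_1, 2n+1-c_1$ are pairwise distinct, and the position $(c_1, 2n+1-c_0)$ lies off both the diagonal and the anti-diagonal.

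I would then act on $x$ by the paired element
\[
h = 1_{2n} + s\, E_{c_1, c_0} - \bar s\, E_{2n+1-c_0, 2n+1-c_1}, \qquad s \in \calO_{k'}^\times.
\]
Verifying $h \in K$ proceeds as for the elements in \eqref{K-elements}: using $j_{2n} E_{i,j} = E_{2n+1-i, j}$, the two transvection-like summands are paired so that the linear contributions to $h^* j_{2n} h$ cancel, and every quadratic cross-term vanishes because the four indices are distinct. A direct expansion then gives
\[
(hxh^*)_{c_1, 2n+1-c_0}
= x_{c_1, 2n+1-c_0} + s\, x_{c_0, 2n+1-c_0} - s\, x_{c_1, 2n+1-c_1} - s^2\, x_{c_0, 2n+1-c_1}.
\]
Among these four summands only $s\, x_{c_0, 2n+1-c_0}$ has $v_\pi$-value exactly $-\ell$: the first and fourth involve off-anti-diagonal entries of $x$ (so $v_\pi > -\ell$ by hypothesis), and the third involves the non-minimal anti-diagonal entry $x_{c_1, 2n+1-c_1}$. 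Therefore $(hxh^*)_{c_1, 2n+1-c_0}$ is a minimal entry of $hxh^* \in K \cdot x$ at a position outside both the diagonal and the anti-diagonal, and no entry of $hxh^*$ falls below $v_\pi = -\ell$. Lemma~\ref{lem 2-2} applied to $hxh^*$ then delivers a hermitian matrix of the required block form.

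The main obstacle is algebraic bookkeeping: checking $h \in G$ (which rests on the pairing of the two elementary terms together with the distinctness of the four indices) and identifying the dominant $v_\pi$-term in the displayed expansion. Once the disjointness of the two pairs $\{c_0, 2n+1-c_0\}$ and $\{c_1, 2n+1-c_1\}$ is recorded, the rest is direct, in the spirit of the regularization step used in the proof of Lemma~\ref{lem 2-1}.
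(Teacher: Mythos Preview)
Your argument is correct and follows the same strategy as the paper: produce a minimal entry off both the diagonal and the anti-diagonal by a transvection in $K$, then invoke Lemma~\ref{lem 2-2}. The paper first uses the Weyl group to place the minimal and non-minimal anti-diagonal entries at positions $(1,2n)$ and $(2,2n-1)$ and then applies $h = 1_{2n} + E_{1,2} - E_{2n-1,2n}$, which is exactly your $h$ with $c_0 = 2$, $c_1 = 1$, $s = 1$; your version simply carries out the same computation at general positions $c_0, c_1$ without the preliminary normalization.
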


\begin{proof}
Set $\ell = \ell(x)$. By the action of $W$, we may assume 
$$
x = \twomatrixplus
{*}{\twomatrixminus{a}{\xi}{c}{b}} {*} {*}, \qquad v_\pi(\xi) = -\ell, \quad a,b,c \in \pi^{-\ell+1}\calO_{k'}. 
$$
Then, for the matrix
$$
h = \left( \begin{array}{c|c|c}
  {\twomatrixminus{1}{1}{0}{1}} & {0} & {0}\\
  \hline
{0} & 1_{2n-4} & {0}\\
\hline
{0} & {0} & {\twomatrixminus{1}{-1}{0}{1}}
\end{array} \right) \in K,
$$
$\ell(h\cdot x) = \ell(x)$ and the $(1, 2n-1)$-entry of $h\cdot x$ is equal to $-\xi + a-b+c$ and minimal.
Then, $h \cdot x$ satisfies the assumption of Lemma~\ref{lem 2-2}, and the result follows from this.
\end{proof}

\begin{lem}   \label{lem 2-4}
Let $x \in \wt{X}_n$ with $n \geq 2$. 
Assume that any minimal entry of $x$ stands in the anti-diagonal and that all the entries in the anti-diagonal are minimal. Denote by $\xi_i$ the $(i, 2n+1-i)$-entry of $x, \; (1 \leq i \leq n)$. Then

{\rm (i)} One has $\ell(x) = 0$, $x \in K$, and $\xi_i \equiv \pm 1 \pmod{(\pi)}, \; 1 \leq i \leq n$.

{\rm (ii)} If $\xi_i \not\equiv \xi_j \pmod{(\pi)}$ for some $i$ and $j$, which occurs only when $2 \notin (\pi)$, then $K\cdot x$ contains a hermitian matrix of type 
$$
\left( \begin{array}{l|c|l}
  {1_2} & {0} & {0}\\[2mm]
  \hline
  {0} & y & {0}\\[2mm]
  \hline
  {0} & {0} & 1_2
\end{array} \right) \in K, \quad y \in \wt{X}_{n-2} \cap M_{2n-4}(\calO_{k'}),
$$
where we understand the above matrix is $1_4$ when $n = 2$. If $x \in X_n$, then the above $y \in X_{n-2}$.

{\rm (iii)} If $k$ has odd residual characteristic and $x \equiv \pm j_{2n}\pmod{(\pi)}$, then $x \notin X_n$.  
\end{lem}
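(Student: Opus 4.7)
For (i) the argument is valuation-theoretic. From $x^{\ast}j_{2n}x=j_{2n}$ we get $\det x\cdot\overline{\det x}=1$, so $v_\pi(\det x)=0$. The entries of $\pi^{\ell(x)}x\in M_{2n}(\calO_{k'})$ are units on the anti-diagonal and lie in $\pi\calO_{k'}$ elsewhere, so in the Leibniz expansion of $\det(\pi^{\ell(x)}x)$ only the reverse permutation contributes a non-zero residue modulo $\pi$. Equating the valuation $0$ with $2n\,\ell(x)+v_\pi(\det x)=2n\,\ell(x)$ forces $\ell(x)=0$, hence $x\in K$. Reducing the identity $xj_{2n}x=j_{2n}$ (which follows from $x=x^{\ast}$) modulo $\pi$ with $x$ anti-diagonal, the $(i,2n+1-i)$-entry yields $\xi_i^{2}\equiv 1\pmod\pi$, whence $\xi_i\equiv\pm 1\pmod\pi$. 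For (iii), set $y=xj_{2n}$; the hypothesis $x\equiv\pm j_{2n}\pmod\pi$ forces $y\equiv\pm 1_{2n}\pmod\pi$, so $\Phi_y(t)\equiv(t\mp 1)^{2n}\pmod\pi$. If $x\in X_n$, then $\Phi_y(t)=(t-1)^n(t+1)^n$, whose reduction is the same polynomial over $\mathbb{F}_q$. In odd residual characteristic these two reductions disagree, since $t\pm 1$ are coprime linear factors in $\mathbb{F}_q[t]$; this contradiction gives $x\notin X_n$.

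\textbf{Part (ii): the main step.} After conjugating by a suitable pair-preserving permutation matrix in $K$ (a Weyl group element), we may permute the $\xi_i$'s freely and arrange $\xi_1\equiv 1$ and $\xi_2\equiv -1\pmod\pi$. Now apply the element $h\in K$ from Lemma~\ref{lem 2-3}. By the same computation carried out in the proof of Lemma~\ref{lem 2-3}, the $(1,2n-1)$-entry of $hxh^{\ast}$ reduces modulo $\pi$ to $-\xi_1+\xi_2\equiv -2$, which is a unit precisely because $q$ is odd. Since $\ell(hxh^{\ast})=\ell(x)=0$, this is a minimal entry of $hxh^{\ast}$ lying off both the diagonal and the anti-diagonal. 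Thus $hxh^{\ast}$ satisfies the hypothesis of Lemma~\ref{lem 2-2}, and that lemma (applied with $\ell=0$) produces exactly the canonical form claimed in (ii), with $y\in\wt X_{n-2}\cap M_{2n-4}(\calO_{k'})$, and $y\in X_{n-2}$ when $x\in X_n$.

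\textbf{Main obstacle.} The content of the lemma is concentrated in (ii): one must produce a $K$-translate of $x$ in which a minimal entry sits off the anti-diagonal so that Lemma~\ref{lem 2-2} becomes applicable. The key observation is that the same upper-triangular shearing matrix $h$ already constructed in Lemma~\ref{lem 2-3} does the job here, because the mixed-sign hypothesis together with odd residual characteristic forces the critical combination $\xi_2-\xi_1\equiv -2$ to be a unit. This is exactly the condition that allows Lemma~\ref{lem 2-2} to take over and finish the proof.
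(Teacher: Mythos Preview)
Your arguments for (i) and (iii) are essentially the paper's: determinant valuation forces $\ell(x)=0$, the relation $xj_{2n}x=j_{2n}$ reduced mod $\pi$ gives $\xi_i^2\equiv 1$, and the characteristic polynomial argument for (iii) is identical.

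For (ii) your route is correct but genuinely different from the paper's. The paper does \emph{not} reuse the shearing matrix of Lemma~\ref{lem 2-3}. Instead it acts by the lower-unipotent element
\[
h=\left(\begin{array}{c|c|c}
1_2 & 0 & 0\\ \hline
0 & 1_{2n-4} & 0\\ \hline
\begin{smallmatrix}1&0\\0&-1\end{smallmatrix} & 0 & 1_2
\end{array}\right)\in K,
\]
computes that the lower right $2\times 2$ block of $h\cdot x$ is congruent to $\begin{pmatrix}0&2\xi_1\\2\xi_1&0\end{pmatrix}$ modulo $\pi$, hence unimodular when $2\notin(\pi)$, normalizes this block to $1_2$, and then repeats the clearing procedure of Lemma~\ref{lem 2-1} twice. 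Your approach---borrow the upper-unipotent $h$ of Lemma~\ref{lem 2-3}, observe that the $(1,2n-1)$-entry of $h\cdot x$ reduces to $\xi_2-\xi_1\equiv\pm2$, and then invoke Lemma~\ref{lem 2-2} verbatim with $\ell=0$---is a cleaner packaging: it feeds directly into an existing lemma rather than redoing its reduction by hand. The paper's approach, by contrast, makes the endgame more self-contained and exhibits explicitly why the corner block becomes $1_2$. Both are valid; yours is shorter.
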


\begin{proof}
(i) Set $\ell = \ell(x)$. By the assumption, we see $v_\pi(\det x) = 2v_\pi(\xi_1\cdots \xi_n) = -2\ell n$, which must be $0$, hence $\ell = 0$ and
$$
x \equiv 
\begin{pmatrix}
{} & {} & {} & {} &                   {} & {\xi_1}\\
         {} & {0} & {} & {} &      \iddots 
 & {}\\
           {}  &  {}  &          {}  &  {\xi_n} & {} & {}\\
 {} & {} &              {{\xi_n}^*} & {} & {} & {}\\
      {}   &  \iddots 
& {}           & {} & {0} & {}\\
{{\xi_1}^*} & {}    & {}           & {} & {} & {}
\end{pmatrix} \pmod{(\pi)}.
$$
Since $j_{2n}[x] = j_{2n}$, we have $\xi_i^2 \equiv {\xi_i}^{* 2} \equiv 1 \pmod{(\pi)}$, hence $\xi_i \equiv \pm 1 \mod{(\pi)}$ for every $i$.

(ii) Now we assume $2 \notin (\pi)$ and $\xi_i \not\equiv \xi_j \pmod{(\pi)}$ for some $i$ and $j$.
We may assume $\xi_1 \not\equiv \xi_2 \pmod{(\pi)}$ by the action of $W$, and write 
$$
x = \left( \begin{array}{c|c|c}
  {\twomatrixminus{a}{b}{{b}^*}{d}} & {*} & \twomatrixminus{c}{\xi_1}{\xi_2}{f}\\[2mm]
  \hline
{*} & {*} & {*}\\[2mm]
\hline
\twomatrixminus{{c}^*}{{\xi_2}^*}{{\xi_1}^*}{{f}^*} & {*} & \twomatrixminus{g}{h}{{h}^*}{r}
\end{array} \right).
$$
For $h \in K$ with 
$$
h = \left( \begin{array}{c|c|c}
  {\twomatrixminus{1}{0}{0}{1}} & {0} & {0}\\[2mm]
  \hline
{0} & 1_{2n-4} & {0}\\[2mm]
\hline
{\twomatrixminus{1}{0}{0}{-1}} & {0} & {\twomatrixminus{1}{0}{0}{1}}
\end{array} \right) \in K,
$$
the lower right $2$ by $2$ block of $h\cdot x$ becomes 
$$
\twomatrix{a+c+{c}^*+g}{-b+h+\xi_1-{\xi_2}^*}{-{b}^*+{h}^*+{\xi_1}^* - \xi_2}{d-f-{f}^*+r} 
\equiv \twomatrix{0}{2\xi_1}{2\xi_1}{0} \pmod{(\pi)},
$$
which is unimodular hermitian of size $2$, since $2 \notin (\pi)$, and we may change it into $1_2$. Then, the similar procedure (twice) to the proof of Lemma~\ref{lem 2-1}, we see $K\cdot x$ contains an element of the required form.

(iii) If $q$ is odd and $x \equiv cj_{2n} \pmod{(\pi)}$ with $c = \pm 1$, then
$\Phi_{xj_{2n}}(t) \equiv (t-c)^{2n} {\not\equiv} (t^2-1)^n \pmod{(\pi)}$, hence $x \notin X_n$.
\end{proof}

\medskip
Now Theorem~\ref{thm: Cartan} follows from Lemmas \ref{lem 2-1} to \ref{lem 2-4}.

We consider even residual case.
 
\medskip
\begin{lem}  \label{lem even case}
Assume that $q$ is even and $x \in X_n \cap K$ satisfies $x \equiv j_{2n} \pmod{(\pi)}$. 
Then, $K \cdot x$ does not contain any diagonal matrix, and represented by a matrix of the following type:
\begin{eqnarray*}
E_n(\mu) =   \begin{pmatrix}
    \pi^{-\mu_n}(1-\eps) &&&&& -\sqrt{\eps} \\
    & \ddots &&& \iddots & \\
    &&\pi^{-\mu_1}(1-\eps)& -\sqrt{\eps}&& \\
    &&\sqrt{\eps}&\pi^{\mu_1} && \\
    & \iddots &&& \ddots & \\
    \sqrt{\eps} &&&&& \pi^{\mu_n}
  \end{pmatrix},
\end{eqnarray*}
where each empty place means zero-entry, and
$$
\mu \in \Lam_n^{(2)} = \set{\mu \in \Lam_n^+}{v_\pi(2) \geq \mu_1 \geq \cdots \geq \mu_n \geq 1}.
$$
\end{lem}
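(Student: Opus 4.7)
The argument divides into two independent parts: (A) show that $K \cdot x$ contains some $E_n(\mu)$ with $\mu \in \Lam_n^{(2)}$, and (B) show that $K \cdot x$ contains no diagonal matrix.

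For (A), I would argue by induction on $n$. The base case $n = 1$ follows from Proposition~\ref{Cartan n=1}: since $x \equiv j_2 \pmod{(\pi)}$, $x$ cannot lie in the first family of orbits (elements of that form, namely $kk^*$ for $k \in K_1$, cannot satisfy $\overline{kk^*} = \overline{j_2}$, as an elementary residue computation shows), so $x \in K_1 \cdot E_1(\mu_1)$ for some $\mu_1 \in [1, v_\pi(2)]$. For $n \geq 2$, the hypothesis $x \equiv j_{2n} \pmod{(\pi)}$ forces the inner $(2n-2) \times (2n-2)$ block $y$ of $x$ (rows/columns $2, \ldots, 2n-1$) to satisfy $y \equiv j_{2n-2} \pmod{(\pi)}$, in particular $y \in GL_{2n-2}(\calO_{k'})$. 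Using this invertibility, one can construct $h \in K$ from unipotent matrices analogous to those in~\eqref{K-elements}, adapted to the outer/inner decomposition $\{1, 2n\} \sqcup \{2, \ldots, 2n-1\}$, that kills the cross-block entries of $x$. After this reduction, the outer $2 \times 2$ block lies in $X_1 \cap K_1$ with residue $\overline{j_2}$, hence equals $E_1(\mu_n)$ in its $K_1$-orbit by the base case, and the inner $(2n-2)$-dimensional block satisfies the hypotheses of the lemma at level $n-1$. Applying the inductive hypothesis to the inner block and sorting the resulting exponents $\mu_i \in [1, v_\pi(2)]$ via the $S_n$-part of the Weyl group (realized in $K$ by permutations of the pairs $(e_i, e_{2n+1-i})$) produces the required representative $E_n(\mu)$ with $\mu \in \Lam_n^{(2)}$.

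For (B), I would use that for any $g \in G$, the identity $g^* j_{2n} = j_{2n} g^{-1}$ gives $(g \cdot y) j_{2n} = g(yj_{2n}) g^{-1}$, so the $GL_{2n}(k')$-conjugacy class of $yj_{2n}$ is a $K$-orbit invariant of $y$. For our $x$, reducing modulo $\pi$ yields $\overline{xj_{2n}} = \overline{j_{2n}}^{\,2} = 1_{2n}$ in $GL_{2n}(\mathbb{F}_{q^2})$. For any diagonal matrix $D = \mathrm{diag}(d_1, \ldots, d_{2n}) \in X_n \cap K$, the conditions $D^* = D$ and $D \in G$ force $d_i \in \calO_k^\times$ with $d_i d_{2n+1-i} = 1$, so $Dj_{2n}$ is anti-diagonal with unit entries, and in particular $\overline{Dj_{2n}} \neq 1_{2n}$. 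Since the $GL_{2n}(\mathbb{F}_{q^2})$-conjugacy class of $1_{2n}$ contains only $1_{2n}$ itself, the classes of $\overline{Dj_{2n}}$ and $1_{2n}$ differ, hence $D \notin K \cdot x$.

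The main technical obstacle is the decoupling step in (A): constructing $h \in K$ that kills the cross-block entries while respecting the unitarity constraint $h^* j_{2n} h = j_{2n}$. The relevant unipotent elements are parametrized by a vector $v \in \calO_{k'}^{2n-2}$ and a scalar $w \in \calO_{k'}$ coupled by $w + w^* = -v j_{2n-2} v^*$, and the vanishing-of-cross-entries equations are themselves coupled with this constraint. Solvability over $\calO_{k'}$ rests on the invertibility of $y \bmod \pi$ together with the surjectivity of the trace $\calO_{k'} \to \calO_k$ (available since $k'/k$ is unramified), possibly combined with a Hensel-style iteration to absorb the nonlinearity arising from $v j_{2n-2} v^*$.
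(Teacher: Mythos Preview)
Your argument for (B) is correct and essentially a repackaging of the paper's: both exploit that $(g\cdot y)j_{2n}=g(yj_{2n})g^{-1}$, so the residue of $xj_{2n}$, namely $1_{2n}$, is a conjugacy invariant that no integral diagonal can match. The paper phrases this as ``$kx=j_{2n}kj_{2n}$ forces $\det k\equiv 0$,'' which is the same obstruction read contragrediently.

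Your argument for (A) has a genuine gap. After your decoupling, you assert that the outer $2\times 2$ block lies in $X_1\cap K_1$; but it need not. Take $n=2$ and
\[
x=\begin{pmatrix}0&0&0&1\\0&0&-1&0\\0&-1&0&0\\1&0&0&0\end{pmatrix}.
\]
Then $x\in X_2\cap K$, and since $q$ is even one has $-1\equiv 1\pmod{\pi}$, so $x\equiv j_4\pmod{\pi}$. The cross-block entries are already zero, yet the outer block is $j_2\notin X_1$ and the inner block is $-j_2\notin X_1$; neither Proposition~\ref{Cartan n=1} nor the inductive hypothesis applies. The issue is that $(Aj_2)^2=1_2$ only forces $\Phi_{Aj_2}(t)\in\{(t-1)^2,(t+1)^2,t^2-1\}$, and both degenerate cases $A=\pm j_2$ are compatible with $A\equiv j_2\pmod{\pi}$.

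The paper's proof is also inductive but sets up each step differently: it writes $x=j_{2n}+\pi^m y$ with $\ell(y)=0$, uses $\Phi_{xj_{2n}}(1)=0$ to rule out the case where every anti-diagonal entry of $y$ is a unit and all others lie in $(\pi)$, and then applies the \emph{proofs} of Lemmas~\ref{lem 2-1}--\ref{lem 2-3} to $y$ (noting $k\cdot x=j_{2n}+\pi^m(k\cdot y)$) to move a unit entry of $y$ to position $(2n,2n)$. This guarantees the $(2n,2n)$-entry of the new $x$ is $\pi^m\cdot(\text{unit})\ne 0$, so the outer block cannot be $\pm j_2$ and genuinely lies in $X_1$. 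That nonvanishing is exactly what your blind decoupling fails to secure.
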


\begin{proof}
If $K \cdot x$ contains a diagonal matrix, it must contain $1_{2n}$, since $x \in K$.
Assume $k \cdot x = 1_{2n}$ for some $k \in K$, and write 
$$
k = \twomatrix{a}{b}{c}{d}, \quad a,b,c,d \in M_n(\calO_{k'}).
$$
Then, since $k$ satisfies $kx = j_{2n}k j_{2n}$, we have
\begin{eqnarray*}
k \equiv \twomatrix{a}{b}{j_n a}{j_n b} \pmod{(\pi)},
\end{eqnarray*}
which contradicts to the fact $\det(k) \in \calO_{k'}^\times$, hence $K \cdot x$ does not contain any diagonal matrix.

Next we will show that $K \cdot x$ contains an $E_n(\mu)$ of the above type.
Since $j_{2n} \notin X_n$, we may write 
\begin{equation*} \label{even-1}
x = j_{2n} + \pi^m y, \quad m>0, \;  0 \neq y = y^* \in M_{2n}(\calO_{k'}), \; \ell(y) = 0,
\end{equation*}
where $\ell(y) = 0$ means that minimal entry of $y$ is a unit. If any entry in the anti-diagonal of $y$ is a unit and all the other entries contained in $(\pi)$, then $\det(y) \in \calO_k^\times$.
On the other hand 
$$
(t^2 - 1)^n = \Phi_{xj_{2n}}(t) = \det(t1_{2n}-(j_{2n}+\pi^m y)j_{2n}) = \det((t-1)1_{2m}-\pi^myj_{2n}),
$$
and
$$
0 = \Phi_{xj_{2n}}(1) = \det(-\pi^m y j_{2n}) = (-1)^n\pi^{2mn} \det(y),
$$
which is a contradiction.
Hence there is a minimal entry of $y$ not in the anti-diagonal. Then following the proof of Lemmas~\ref{lem 2-1} to \ref{lem 2-3},
there exists some $k \in K$, for which $k \cdot y$ becomes (\ref{lem1 mid}) with $\ell = 0$.
Since $k\cdot x = j_{2n} + \pi^m (k \cdot y)$ satisfies $(k\cdot x) \cdot j = j$, looking at the $2n$-th row, we have
\begin{eqnarray*}
c_i = 0, i \geq 2, \quad 2+2\pi^m b_0 + \pi^m b_1 = 0, 
\end{eqnarray*}
and
$$
k\cdot x = 
\left( \begin{array}{c|c|c}
\pi^mc & 0 & -(1+\pi^m b_0)\sqrt{\eps}\\
\hline
0 & \xi & 0\\
\hline
(1+\pi^m b_0) &   0 & \pi^m
\end{array}\right), \quad \xi \equiv j_{2(n-1)} \pmod{(\pi^m)},
$$
Then, by acting 
$$
\left( \begin{array}{c|c|c}
1 & 0 & b_0\sqrt{\eps}\\
\hline
0 & 1_{2(n-1)} & 0\\
\hline
0 & 0 & 1
\end{array}\right) \in K,
$$
$k \cdot x$ becomes 
$$
\left( \begin{array}{c|c|c}
\pi^m(1 - \eps) & 0 & -\sqrt{\eps}\\
\hline
0 & \xi & 0\\
\hline
\sqrt{\eps} & 0 & \pi^m
\end{array}\right), \quad \xi \equiv j_{2(n-1)} \pmod{(\pi^m)}.
$$
Repeating the same procedure, we conclude the proof.
\end{proof}

\bigskip
Now, by Lemmas~\ref{lem 2-1} to \ref{lem 2-3}, Lemma~\ref{lem 2-4}-(i), and Lemma~\ref{lem even case}, we see the following.

\begin{thm} \label{prop: even}
Assume that $q$ is even. Then 
\begin{eqnarray*}
X_n = \bcup_{r = 0}^n\, \bcup _{\shita{\lam \in \Lam_r^+}{\mu \in \Lam^{(2)}_{n-r}}}\     K \cdot x_{\lam, \mu}, \qquad x_{\lam,\mu} =   
\begin{pmatrix}
    D_r(\lambda) &0&0\\
    0& E_{n-r}(\mu) &0\\
    0&0& D_r(-\lambda)
  \end{pmatrix},
\end{eqnarray*}
where $D_r(\lambda)$ and $D_r(-\lambda)$ are related to $x_\lambda$ by
\begin{eqnarray*}
&&
x_\lam = \twomatrix{D_r(\lam)}{0}{0}{D_r(-\lam)}  \in X_r, 
\end{eqnarray*}
and $x_{\lam, \mu}$ is understood as $x_\lam$ (resp.~$E_n(\mu)$) if $r = n$ (resp.~$r = 0$). 
Further one has
$$
\bcup_{\lam \in \Lam_n^+}\, K \cdot x_\lam = \bsqcup_{\lam \in \Lam_n^+}\, K \cdot x_\lam \not\ni E_n(\mu) \qquad (\mu\in\Lam^{(2)}_{n}).
$$
\end{thm}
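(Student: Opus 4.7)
The plan is to combine Lemmas~\ref{lem 2-1}--\ref{lem 2-4}(i) with Lemma~\ref{lem even case} in a recursive peeling argument, and then to treat the disjointness and the non-incidence of $E_n(\mu)$ separately. For the main decomposition, I would proceed by induction on $n$. Given $x\in X_n$, I inspect the minimal entries: if some lies on the diagonal, Lemma~\ref{lem 2-1} splits off a pair $\pi^{\pm\ell}$ and reduces to $X_{n-1}$; if no minimal entry sits on the diagonal but some lies off the anti-diagonal, Lemma~\ref{lem 2-2} strips two such pairs and reduces to $X_{n-2}$; if every minimal entry lies on the anti-diagonal but not every anti-diagonal entry is minimal, Lemma~\ref{lem 2-3} performs the same reduction; otherwise, Lemma~\ref{lem 2-4}(i) forces $\ell(x)=0$, $x\in K$, and each $\xi_i\equiv\pm 1\pmod{(\pi)}$, and since $q$ is even $-1\equiv 1\pmod{(\pi)}$, so $x\equiv j_{2n}\pmod{(\pi)}$ and Lemma~\ref{lem even case} supplies $\mu\in\Lam_n^{(2)}$ with $E_n(\mu)\in K\cdot x$.

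Iterating, every $x\in X_n$ admits a $K$-representative that is block-diagonal, with outer blocks of the form $\mathrm{diag}(\pi^{\ell_i},\pi^{-\ell_i})$ and at most one central block $E_{n-r}(\mu)$ produced by the fourth case. A rearrangement by a Weyl-group element $w\in W\subset K$ nests the exponents so that the outer pieces become $D_r(\lam)\oplus D_r(-\lam)$ with $\lam\in\Lam_r^+$, yielding the representative $x_{\lam,\mu}$ and hence the stated union decomposition. For the disjointness of $\{K\cdot x_\lam\}_{\lam\in\Lam_n^+}$, note that for $g\in K\subset GL_{2n}(\calO_{k'})$ the congruence $x\mapsto gxg^*$ preserves the $\calO_{k'}$-elementary divisors (as $g$ and $g^*$ both lie in $GL_{2n}(\calO_{k'})$); the elementary divisors of the diagonal $x_\lam$ form the multiset $\{\pi^{\pm\lam_i}\}$, which recovers $\lam\in\Lam_n^+$ uniquely. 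For the non-incidence $E_n(\mu)\notin K\cdot x_\lam$, one checks $E_n(\mu)\equiv j_{2n}\pmod{(\pi)}$: the diagonal entries $\pi^{-\mu_i}(1-\eps)$ have $\pi$-valuation $2v_\pi(2)-\mu_i\ge v_\pi(2)\ge 1$, the sub-anti-diagonal entries $\pi^{\mu_i}$ lie in $(\pi)$ since $\mu_i\ge 1$, and $\pm\sqrt{\eps}\equiv 1\pmod{(\pi)}$ in characteristic $2$; Lemma~\ref{lem even case} applied to $E_n(\mu)$ then asserts that $K\cdot E_n(\mu)$ contains no diagonal matrix, in particular no $x_\lam$.

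The main obstacle I anticipate is the bookkeeping at the interface between the first three cases and the fourth in the recursion: once the fourth is invoked on a subblock, one must confirm that no subsequent application of the first three disturbs the central $E$-block it produces, and that the Weyl-group element used to sort the outer $\pi^{\pm\ell_i}$ entries into the nested $D_r(\lam)\oplus D_r(-\lam)$ shape genuinely lies in $K$.
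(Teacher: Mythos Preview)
Your approach is correct and is exactly the one the paper takes: the paper simply cites Lemmas~\ref{lem 2-1}--\ref{lem 2-3}, Lemma~\ref{lem 2-4}(i), and Lemma~\ref{lem even case} without further elaboration, and you have spelled out the recursion and the two supplementary claims (disjointness via elementary divisors, non-incidence of $E_n(\mu)$ via $E_n(\mu)\equiv j_{2n}\pmod{(\pi)}$ and the first assertion of Lemma~\ref{lem even case}) in the intended way. Your worry at the end is unnecessary: the fourth case is terminal in the recursion (Lemma~\ref{lem even case} consumes the entire remaining block), so no subsequent application of Lemmas~\ref{lem 2-1}--\ref{lem 2-3} occurs, and the peeled exponents $\ell$ are already non-increasing since each $y$ lies in $M(\pi^{-\ell}\calO_{k'})$, so no reordering is even required.
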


\bigskip
As a corollary of Theorem~\ref{thm: Cartan} and Theorem~\ref{prop: even}, we have the following. For $\lam \in \Lam_n^+$, we set $\abs{\lam} = \sum_{i=1}^n\, \lam_i$ and call $\lam$ to be {\it even} or {\it odd} according to the parity of $\abs{\lam}$.

\begin{thm} \label{thm: G-orbits}
There are precisely two $G$-orbits in $X_n$:
\begin{equation*} \label{G-orbits}
X_n = G \cdot x_0 \, \sqcup\,  G \cdot x_1,\quad x_0 = 1_{2n}, \; x_1 = Diag(\pi, 1, \ldots, 1, \pi^{-1}).
\end{equation*}
If $q$ is odd, then 
$$
G \cdot x_0 = \bigsqcup_{\shita{\lam \in \Lam_n^+}{even}}\, K \cdot x_\lam, \quad 
G \cdot x_1 = \bigsqcup_{\shita{\lam \in \Lam_n^+}{odd}}\, K \cdot x_\lam .
$$
If $q$ is even, $x_{\lam, \mu}$ is $G$-equivalent to $x_0$ if and only if $\abs{\lam} + \abs{\mu}$ is even.
\end{thm}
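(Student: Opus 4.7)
My plan is to exhibit a $G$-invariant that takes exactly two values and evaluate it on the explicit representatives to identify the two orbits. The invariant attached to $x\in X_n$ is the discriminant class of the hermitian form $H(v,w):=v^*j_{2n}w$ restricted to the $+1$-eigenspace $V_+(x):=\ker(xj_{2n}-1_{2n})$. Using $x^*=x$ together with $x^*j_{2n}x=j_{2n}$, a short computation shows that $y:=xj_{2n}$ satisfies $y^2=1_{2n}$ and is self-adjoint for $H$; combined with $\Phi_{xj_{2n}}(t)=(t^2-1)^n$, this makes $V_+(x)$ an $n$-dimensional $H$-nondegenerate subspace, and since $G=U(H)$ with $V_+(g\cdot x)=gV_+(x)$, the resulting discriminant in $k^\times/N((k')^\times)\cong\Z/2\Z$ is a $G$-invariant. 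Hence $X_n$ has at most two $G$-orbits.

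Next, a direct calculation gives $V_+(x_\lam)=\mathrm{span}\{\pi^{\lam_i}e_i+e_{2n+1-i}\}_{i=1}^n$ with $H$-Gram matrix $Diag(2\pi^{\lam_1},\ldots,2\pi^{\lam_n})$, so the discriminant has $\pi$-valuation $\abs{\lam}+n\,v_\pi(2)\equiv\abs{\lam}\pmod 2$. Evaluating at $x_0$ (with $\abs{\lam}=0$) and $x_1$ (with $\abs{\lam}=1$) yields opposite classes, so there are exactly two $G$-orbits; in the odd residual case, Theorem~\ref{thm: Cartan} then sorts the $K$-orbits $K\cdot x_\lam$ into them according to the parity of $\abs{\lam}$, giving the claimed decomposition.

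For the even residual case, observe that $x_{\lam,\mu}$ is block-diagonal with blocks $D_r(\lam)$, $E_{n-r}(\mu)$, $D_r(-\lam)$, and that $j_{2n}$ pairs the outer $D_r$ blocks while restricting to $j_{2(n-r)}$ on the middle block; hence $V_+(x_{\lam,\mu})$ splits $H$-orthogonally as a $D$-contribution (with discriminant valuation $\equiv\abs{\lam}\pmod 2$ as above) and an $E$-contribution. For the $E$-contribution I would compute $V_+(E_1(\mu_1))$ directly, obtaining a one-dimensional Gram of $\pi$-valuation $\mu_1+v_\pi(2)-v_\pi(1-\eps)$; using $\eps\in 1+4\calO_k^\times$ so that $v_\pi(1-\eps)=2\,v_\pi(2)$, this simplifies to $\mu_1-v_\pi(2)$, of parity $\mu_1 \pmod 2$ relative to the $1_2$ case. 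Assembling across blocks yields total invariant $\abs{\lam}+\abs{\mu}\pmod 2$, matching the classes of $x_0$ and $x_1$ as claimed. The principal obstacle is precisely this $E$-block computation: the off-anti-diagonal perturbations $\pi^{-\mu_i}(1-\eps)$ have valuations intertwined with $v_\pi(2)$, and the crucial point is that the $v_\pi(2)$-corrections cancel modulo $2$, leaving the clean invariant $\abs{\mu}\pmod 2$.
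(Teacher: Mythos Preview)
Your approach is sound and genuinely different from the paper's. The paper splits the count into two halves: for ``at most two'' it reduces (via Theorem~\ref{thm: Cartan} and Theorem~\ref{prop: even}) every $x\in X_n$ to a diagonal matrix, then uses the Levi embedding $GL_n(k')\hookrightarrow G$, $h\mapsto\twomatrix{h}{0}{0}{j_nh^{*-1}j_n}$ together with the two-class classification of $\calH_n(k')$ to collapse the diagonal representatives; for ``at least two'' it appeals to Galois cohomology, computing the stabilizer of $1_{2n}$ in $G(\ol{k})$ as $U(1_n)(\ol{k})\times U(1_n)(\ol{k})$ and observing that the connecting map $H^1(\Gamma,U(1_n)\times U(1_n))\to H^1(\Gamma,G)$ from $C_2\times C_2$ to $C_2$ has nontrivial kernel. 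You instead build a single concrete invariant---the discriminant class of $H\vert_{V_+(x)}$---and read off both bounds by direct eigenspace computations at the representatives. This avoids Galois cohomology entirely and makes the even-residue assertion fully explicit, whereas the paper disposes of it in one line (``by the action of $B$, $x_{\lam,\mu}$ becomes diagonal'').

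One step should be made explicit: the sentence ``the resulting discriminant\ldots is a $G$-invariant, hence $X_n$ has at most two $G$-orbits'' is not automatic---a two-valued invariant does not by itself bound the orbit count. What you are really using is that $x\mapsto V_+(x)$ is a \emph{bijection} from $X_n$ onto the set of $n$-dimensional $H$-nondegenerate subspaces of $(k')^{2n}$ (the inverse sends $V$ to $yj_{2n}$, where $y$ is the unique $H$-self-adjoint involution equal to $+1$ on $V$ and $-1$ on $V^{\perp}$), combined with Witt's extension theorem, which says $G=U(H)$ acts transitively on nondegenerate subspaces of a fixed isometry type; for unramified $k'/k$ that type is exactly the discriminant parity. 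With this bijection and Witt's theorem stated, your argument is complete.
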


\begin{proof}
For unramified hermitian matrices, it is known that $\calH_m(k')$ has two $GL_m(k')$-orbits determined by the parity of $v_\pi(\det(y))$ and $H^1(\Gamma, U(y)(\ol{k})) \cong C_2$, where $\Gamma = Gal(\ol{k}/k)$, $y \in \calH_m(k')$ and $m \geq 1$. If $q$ is odd, each representative $x_\lam$ of $K$-orbit in Theorem~\ref{thm: Cartan} is diagonal; if $q$ is even, by the action of $B$, $x_{\lam, \mu}$ in Theorem~\ref{prop: even} becomes diagonal, hence there are at most two $G$-orbits in $X_n$, independent of the parity of $q$. 
We recall $G(\ol{k}), \; X_n(\ol{k})$ and $\star$-action in 
\eqref{star-action} in Appendix \ref{sec:appA} for $m = 2n$, and set 
$$
H(\ol{k}) = \set{h \in G(\ol{k})}{h \star 1_{2n} = 1_{2n}},
$$
then it is easy to see
\begin{align}  
H(\ol{k}) &= \set{\twomatrix{a}{b}{jbj}{jaj} \in GL_{2n}(\ol{k})}{a+bj, \; a-bj \in U(1_n)(\ol{k})} \quad (j = j_n) \notag \\
\label{stabilizer at 1_2n}
& \cong U(1_n)(\ol{k}) \times U(1_n)(\ol{k}). 
\end{align}
By the exact sequence of $\Gamma$-sets
$$
\begin{array}{lcccl}
1 \longrightarrow H(\ol{k}) \longrightarrow & G(\ol{k}) &\longrightarrow &X_n(\ol{k}) &\longrightarrow 1,
\\
{} & g & \longmapsto & g \star 1_{2n} & {}
\end{array}
$$
we have an exact sequence of pointed sets (cf.~\cite[I-\S 5.4]{Serre})
$$
1 \longrightarrow G \cdot 1_{2n} \longrightarrow X_n \longrightarrow H^1(\Gamma, H(\ol{k})) \stackrel{\eta}{\longrightarrow} H^1(\Gamma, G(\ol{k})).
$$
Since $\eta$ is a map from $C_2 \times C_2$ to $C_2$, ${\rm Ker}(\eta)$ cannot be trivial and $G \cdot 1_{2n} \ne X_n$. Hence there are at least two $G$-orbits in $X_n$, thus exactly two $G$-orbits and they are given as above.
\end{proof}

\vspace{2cm}

\section{Spherical function $\omega(x; s)$ on $X$}
\label{sec:2}

{\bf 2.1.} 
For simplicity, we write $j = j_{n}$, and take a Borel subgroup $B$ of $G$ by
\begin{eqnarray*}
B &=& \set{ \twomatrix{b}{0}{0}{jb^{* -1}j} \twomatrix{1_n}{aj}{0}{1_n} \in G  }
{\begin{array}{l}
 \mbox{$b$ is upper triangular of size $n$}\\
 a+a^* = 0 \end{array}}, 
\end{eqnarray*}
where $B$ consists of all the upper triangular matrices in $G$.

We introduce a spherical function $\omega(x; s)$ on $X$ by Poisson transform from relative $B$-invariants. 
For a matrix $g \in G$, denote by $d_i(g)$ the determinant of lower right $i$ by $i$ block of $g$.
Then $d_i(x), \; 1 \leq i \leq n$ are relative $B$-invariants on $X$ associated with rational characters $\psi_i$ of $B$, where
\begin{equation}
d_i(p \cdot x) = \psi_i(p)d_i(x), \quad \psi_i(p) = N_{k'/k}(d_i(p)), \quad (x \in X, \; p \in B).
\end{equation}
We set
\begin{eqnarray*}
X^{op} = \set{x \in X}{d_i(x) \ne 0, \; 1 \leq i \leq n}.
\end{eqnarray*}
%
For $x \in X$ and $s=(s_i) \in \C^n$, we consider the integral 
\begin{eqnarray} \label{def-sph}
\omega(x; s) = \int_{K}\, \abs{\bfd(k\cdot x)}^{s+\ve} dk, \quad \abs{\bfd(y)}^s = \prod_{i=1}^n\, \abs{d_i(y)}^{s_i},
\end{eqnarray}
where $dk$ is the normalized Haar measure on $K$, $k$ runs over the set $\set{k \in K}{k\cdot x \in X^{op}}$, and
\begin{equation} \label{def of ve}
\ve = \ve_0 + (\frac{\pi\sqrt{-1}}{\log q}, \ldots, \frac{\pi\sqrt{-1}}{\log q}), \quad \ve_0 = (-1,\ldots, -1, -\frac12) \in \C^n.
\end{equation}
The right hand side of (\ref{def-sph}) is absolutely convergent if $\real(s_i) \geq -\real(\ve_i) = -\ve_{0,i}, \; 1 \leq i \leq n$, and continued to a rational function of $q^{s_1}, \ldots, q^{s_n}$ (cf.~\cite[Remark 1.1]{JMSJ}), and we use the notation $\omega(x; s)$ in such sense. 
We note here that
\begin{equation*}
\abs{\psi(p)}^\ve \left( = \prod_{i=1}^n\, \abs{\psi_i(p)}^{\ve_i} \right) = \abs{\psi(p)}^{\ve_0} 
= \delta^{\frac12}(p),
\end{equation*}
where $\delta$ is the modulus character on $B$ (i.e.,  $d(pp') = \delta(p')^{-1}dp$ for the left invariant measure $dp$ on $B$). 
 
By a general theory, the function $\omega(x; s)$ becomes an $\hec$-common eigenfunction on $X$ (cf. \cite[\S 1]{JMSJ}, or \cite[\S 1]{French}), and we call it a spherical function on $X$. More precisely, the Hecke algebra $\hec$ of $G$ with respect to $K$ is the commutative $\C$-algebra consisting of compactly supported two-sided $K$-invariant functions on $G$, which acts on the space $\CKX$ of left $K$-invariant functions on $X$ by 
\begin{equation*}
(f * \Psi)(y) = \int_{G}\, f(g)\Psi(g^{-1}\cdot y) dg, \quad (f \in \hec, \; \Psi \in \CKX),
\end{equation*}
where $dg$ is the Haar measure on $G$ normalized by $\int_{K}\, dk = 1$, and we see 
\begin{eqnarray*}
(f * \omega(\; ; s))(x) = \lam_s(f) \omega(x; s), \quad (f \in \hec),
\end{eqnarray*}
where $\lam_s$ is the $\C$-algebra homomorphism defined by 
\begin{eqnarray*} \label{eigen funct}
&&
\lam_s : \hec \longrightarrow \C(q^{s_1}, \ldots, q^{s_n}), \\
&&
\quad
f \longmapsto \int_{B}\, f(p)\abs{\psi(p)}^{-s+\ve} dp.
\end{eqnarray*}

\bigskip
We introduce a new variable $z$ which is related to $s$ by
\begin{eqnarray} \label{change of var}
s_i = -z_i + z_{i+1} \quad (1 \leq i \leq n-1), \quad
s_n = -z_n 
\end{eqnarray}
and write $\omega(x;z) = \omega(x; s)$. 
Denote by $W$ the Weyl group of $G$ with respect to the maximal $k$-split torus in $B$. Then $W$ acts on rational characters of $B$ as usual (i.e., $\sigma(\psi)(b) = \psi(n_\sigma^{-1}b n_\sigma)$ by taking a representative $n_\sigma$ of $\sigma$), so $W$ acts on  $z \in \C^n$ and on $s \in \C^n$ as well.  We will determine the functional equations of $\omega(x; s)$ with respect to this Weyl group action.
The group $W$ is isomorphic to $S_n \ltimes C_2^n$, $S_n$ acts on $z$ by permutation of indices, and $W$ is generated by $S_n$ and $\tau: (z_1, \ldots, z_n) \longmapsto (z_1, \ldots, z_{n-1}, -z_n)$. Keeping the relation (\ref{change of var}), we also write $\lam_z(f) = \lam_s(f)$.
Since
$$
\abs{\psi(p)}^{-s+\ve} = \prod_{i=1}^n\, \abs{N(p_i)}^{-z_i} \times \delta^{\frac12}(p), 
$$
where $p_i$ is the $i$-th diagonal component of $p \in B$, the $\C$-algebra map $\lam_z$ is an isomorphism (the Satake isomorphism)
\begin{eqnarray} \label{satake iso}
\lam_z  &:& \hec \stackrel{\sim}{\longrightarrow} \C[q^{\pm 2z_1}, \ldots, q^{\pm 2z_n}]^W,
\end{eqnarray}
where the ring of the right hand side is the invariant subring of the Laurent polynomial ring $\C[q^{2z_1}, q^{-2z_1}, \ldots, q^{2z_n}, q^{-2z_n}]$ by $W$.

\bigskip
By using a result on spherical functions on the space of hermitian forms, we obtain the following results.

\begin{thm} \label{th: feq Sn}
The function $G_1(z) \cdot \omega(x; z)$ is invariant under the action of $S_n$ on $z$, where
\begin{eqnarray*}
G_1(z) = \prod_{1 \leq i < j \leq n}\, \frac{1 + q^{z_i-z_j}}{1 - q^{z_i-z_j-1}}.
\end{eqnarray*}
\end{thm}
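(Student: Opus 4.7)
The strategy is to reduce the $S_n$-part of the functional equation to the known functional equations for spherical functions on the hermitian matrix space $\calH_n(k')$, exploiting the fact that $S_n$ is the Weyl group of the Levi factor $L \cong GL_n(k')$ of the (Siegel-type) maximal parabolic $P = L \cdot N \subset G$ whose unipotent radical $N$ is the upper-unipotent part of $B$. Under this identification, the simple reflections in $S_n\subset W$ act on the variable $z$ exactly as the Weyl group of $\calH_n(k')$ acts on its Satake parameter, and the relative $B$-invariants $d_1,\dots,d_n$ restrict, on the $L$-factor, to the standard $K_0$-relative invariants on $\calH_n(k')$ (where $K_0 = GL_n(\calO_{k'})$).

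First, I would use the Iwasawa decomposition $G = P \cdot K$ to rewrite
$$
\omega(x;z) \;=\; \int_K \abs{\bfd(k\cdot x)}^{s+\ve}\, dk
$$
as an iterated integral with inner integration over $K_0$, by fibering $K$ over $(P\cap K)\backslash K$. Using the cocycle $d_i(\ell\cdot y) = \abs{N_{k'/k}(d_i(\ell))}\, d_i(y)$ for $\ell\in L$, together with the factorization $\abs{\psi(p)}^{\ve} = \delta^{1/2}(p)$ noted just after \eqref{def of ve}, the inner $K_0$-integral takes the shape of a Poisson kernel on $\calH_n(k')$ at a point that depends on the outer $K$-variable, on $x$, and on the parameter $z$. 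Second, I would invoke the $S_n$-functional equation for spherical functions on $\calH_n(k')$, which is established in the first author's earlier work (cf.~\cite{JMSJ}); the gamma factor attached to the simple reflection $(i\;i{+}1)$ is precisely $(1+q^{z_i-z_{i+1}})/(1-q^{z_i-z_{i+1}-1})$, so the product over the positive $A_{n-1}$-roots is $G_1(z)$. Applying this inside the outer integral and using the $K_0$-invariance of the outer measure gives
$$
G_1(z)\,\omega(x;z) \;=\; G_1(\sigma(z))\,\omega(x;\sigma(z))\qquad(\sigma\in S_n),
$$
which is the desired invariance (since $G_1$ itself is not $S_n$-invariant in general, but what is asserted is that the product is).

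The main technical obstacle is the first step: making the fibration $K \to (P\cap K)\backslash K$ explicit enough that $\abs{\bfd(k\cdot x)}^{s+\ve}$ factors into (a)~a piece depending only on the outer variable and on $x$, times (b)~a piece that matches exactly the normalized Poisson kernel defining the spherical function on $\calH_n(k')$ with spectral parameter $z$. In particular one has to match the shift $\ve$ against the modulus $\delta^{1/2}$ of the hermitian Borel, and verify that the coordinate change \eqref{change of var} aligns with the standard torus coordinates of the $A_{n-1}$ subsystem — both of which are bookkeeping. Once this is settled, the $S_n$-invariance of $G_1(z)\omega(x;z)$ is a formal consequence of the hermitian result and closes the proof.
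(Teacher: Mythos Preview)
Your proposal is essentially correct and follows the same strategy as the paper: reduce to the known $S_n$-functional equation for spherical functions on $\calH_n(k')$ by exhibiting a $K_0$-integral inside the $K$-integral, then quote the hermitian result (the paper's Proposition following the theorem, citing \cite{Hamb}).

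The one difference worth noting is in the mechanism for producing the inner $K_0$-integral. You propose to use Iwasawa $G=PK$ and fiber $K$ over $(P\cap K)\backslash K$; this can be made to work (the unipotent radical $N\cap K$ leaves each $d_i$, $1\le i\le n$, invariant, so the fiber integral collapses to a $K_0$-integral), but it is heavier than necessary and the bookkeeping you flag as the ``main technical obstacle'' is genuinely nontrivial in that formulation. The paper bypasses this entirely: it simply uses the embedding $K_0\hookrightarrow K$, $h\mapsto \wt{h}=\twomatrix{jh^{*-1}j}{0}{0}{h}$, inserts a harmless extra average $\int_{K_0}dh$ in front of $\int_K$, shifts $k\mapsto \wt{h}k$ by left $K$-invariance, and swaps the order by Fubini. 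Since $d_i(\wt{h}\cdot y)=d_i(h\cdot D(y))$ where $D(y)$ is the lower-right $n\times n$ block, the inner integral is immediately identified with the hermitian spherical function $\zeta_*^{(h)}(D(k\cdot x);z)$, and no fibration, modulus matching, or coset-volume computation is needed. Your route reaches the same endpoint but the paper's trick is a one-line shortcut that dissolves the obstacle you anticipated.
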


\begin{proof}
By the embedding 
\begin{eqnarray*} \label{embed}
K_0 = GL_n(\calO_{k'})
\longrightarrow K, \quad h \longmapsto \wt{h} = \twomatrix{jh^{* -1}j}{0}{0}{h}, 
\end{eqnarray*}
and the normalized Haar measure $dh$ on $K_0$, we obtain, for $s \in \C^n$ satisfying $\real(s_i) \geq -\real(\ve_i), \; 1 \leq i \leq n$,
\begin{eqnarray*}
\omega(x;z) &=& \omega(x;s) = 
\dint{K_0}\,dh \dint{K}\, \abs{\bfd(k\cdot x)}^{s+\ve} dk
\nonumber \\
&=&
\dint{K_0}\,dh \dint{K}\, \abs{\bfd(\wt{h} k\cdot x)}^{s+\ve} dk
=
\dint{K}\, \dint{K_0}\, \abs{\bfd(\wt{h} k\cdot x)}^{s+\ve} dh dk\nonumber\\
&=&
\dint{K}\, {\zeta}_*^{(h)}(D(k\cdot x); z) dk. 
\label{id with herm}
\end{eqnarray*}
Here $D(k\cdot x)$ is the lower right $n$ by $n$ block of $k \cdot x$ for $\set{k \in K}{k\cdot x \in X^{op}}$, and ${\zeta}_*^{(n)}(y; z)$ is a spherical function on $\calH_n(k')$ defined by
\begin{eqnarray*}
{\zeta}_*^{(h)}(y; z) = \dint{K_0}\, \abs{\bfd(h\cdot y)}^{s+\ve} dh, \quad (h \cdot y = hyh^*),
\end{eqnarray*}
where the variable $z$ is related to $s$ by (\ref{change of var}), $\ve$ is defined in (\ref{def of ve}), and 
$h$ runs over the set $\set{h \in K_0}{d_i(h\cdot y) \ne 0, \; 1 \leq i \leq n}$. The assertion of Theorem~\ref{th: feq Sn} follows from the next proposition.
\end{proof}

%
\begin{prop} 
For any $y \in \calH_n(k')$, the function $G_1(z) \cdot {\zeta}_*^{(h)}(y; z)$ is holomorphic on $\C^n$ and invariant under the action of $S_n$.
\end{prop}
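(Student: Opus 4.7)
The plan is to reduce the statement to existing results on spherical functions attached to the action of $GL_n(k')$ on the space of hermitian forms $\calH_n(k')$, as developed in the first author's earlier work (cf.\ \cite{JMSJ}). Through the embedding $h\mapsto\widetilde h$ introduced in the proof of Theorem~\ref{th: feq Sn}, together with a routine check that $|\psi(p)|^{\ve} = \delta_{B_0}^{1/2}(p)$ on the upper-triangular Borel $B_0 \subset GL_n(k')$, one identifies $\zeta_*^{(h)}(y;z)$ with the standard normalized spherical function on $\calH_n(k')$. The Weyl group of $GL_n(k')$ with respect to its diagonal torus is precisely $S_n$ acting on $(z_1,\ldots,z_n)$ by permutation of indices, which matches the action considered in the Proposition.

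First I would invoke the functional equations of this hermitian spherical function: for each simple transposition $\sigma_i\colon z_i\leftrightarrow z_{i+1}$ there is an explicit factor $\gamma_i(z)$, depending only on $z_i-z_{i+1}$, such that
$$
\zeta_*^{(h)}(y;z) = \gamma_i(z)\,\zeta_*^{(h)}(y;\sigma_i z) \qquad\text{for every } y\in\calH_n(k').
$$
A direct calculation shows that $G_1(z)$ is the product over the positive roots of $S_n$ of the individual factors that make it transform correctly, namely $G_1(\sigma_i z)/G_1(z) = \gamma_i(z)$ for every simple reflection: under $\sigma_i$ only the single factor in $G_1(z)$ indexed by the pair $(i,i+1)$ is altered, and that factor is designed precisely to match $\gamma_i$. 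Substituting the functional equation then gives $\sigma_i$-invariance of $G_1(z)\,\zeta_*^{(h)}(y;z)$, and since the simple transpositions generate $S_n$, the full $S_n$-invariance follows.

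For holomorphy, I would combine this invariance with control of the singular set, either via the Cartan decomposition $\calH_n(k') = \bigsqcup_{\mu\in\Lam_n} K_0\cdot\pi^\mu$ or --- more efficiently --- by quoting the explicit Hall--Littlewood-type formula from \cite{JMSJ}, which writes $\zeta_*^{(h)}(\pi^\mu;z) = c_\mu\cdot G_1(z)^{-1}\cdot P_\mu(q^{-z};q^{-1})$ for an explicit scalar $c_\mu$ and the type-$A_{n-1}$ Hall--Littlewood polynomial $P_\mu$. Multiplying by $G_1(z)$ leaves a constant multiple of a Hall--Littlewood polynomial, which is manifestly a Laurent polynomial in $q^{z_i}$ and hence holomorphic on all of $\C^n$. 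The main obstacle is bookkeeping: one must reconcile the specific shift $\ve = \ve_0 + (\pi\sqrt{-1}/\log q,\ldots)$ of (\ref{def of ve}) and the linear substitution (\ref{change of var}) with the conventions of the cited references on spherical functions for $\calH_n(k')$, so that the $\gamma_i$'s assemble into exactly the prescribed $G_1(z)$ without stray signs or shifts by $\pi\sqrt{-1}/\log q$ remaining.
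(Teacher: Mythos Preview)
Your strategy---reduce to known results on spherical functions on $\calH_n(k')$---is exactly right, and the paper takes the same route. But you gloss over the one substantive point. The function $\zeta_*^{(h)}(y;z)$ is built from the \emph{lower-right} minors $d_i$, whereas the spherical function $\zeta^{(h)}(y;z)$ whose functional equations and holomorphy were established in \cite{Hamb} is built from the \emph{upper-left} minors $\widehat{d}_i$. These are genuinely different relative $B_0$-invariants, so you cannot simply ``identify'' $\zeta_*^{(h)}$ with the standard object and quote the result; nor does the explicit Hall--Littlewood formula from \cite{JMSJ} apply to $\zeta_*^{(h)}$ as written.

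The paper bridges this with a short but essential computation: using $d_i(y)=\det(y)\,\widehat{d}_{n-i}(y^{-1})$, one rewrites
\[
\zeta_*^{(h)}(y;z)=\zeta^{(h)}(y^{-1};w),
\]
where $w$ is the $z$-variable corresponding to the $s$-variable $(s_{n-1},\ldots,s_1,\,-\sum_i s_i+\tfrac{n}{2}+(n-1)\tfrac{\pi\sqrt{-1}}{\log q})$. A direct check gives $w_i-w_j=z_{n+1-j}-z_{n+1-i}$ for $i<j$, hence $G_1(w)=G_1(z)$, and the known holomorphy and $S_n$-invariance of $G_1(w)\,\zeta^{(h)}(y^{-1};w)$ transfer immediately. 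What you flagged as mere ``bookkeeping'' is in fact this identification via inversion $y\mapsto y^{-1}$ and reversal of the $s$-coordinates, and it is the entire content of the paper's proof. Your alternative---redo the functional-equation and explicit-formula analysis directly for $\zeta_*^{(h)}$---would also work, but it amounts to reproving the cited results rather than quoting them.
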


\begin{proof}
In \cite[\S 4.2]{Hamb}, we have considered the following spherical function on $\calH_n(k')$ 
\begin{eqnarray*}
\zeta^{(h)}(y; z) = \abs{\det(y)}^{\frac{n}{2}} \dint{K_0}\, \prod_{i=1}^n\, \abs{\what{d}_i(h\cdot y)}^{s_i+\ve_i} dh, 
\end{eqnarray*}
where $\what{d}_i(y)$ is the determinant of upper left $i$ by $i$ block of $y$, and the relation of $z$ and $s$ and $\ve$ are the same as before, 
and showed that the function $G_1(z) \cdot \zeta^{(h)}(y; z)$ 
is holomorphic on $\C^n$ and invariant under the action of $S_n$.
Since $d_i(y) = \det(y) \what{d}_{n-i}(y^{-1})$, we see 
\begin{eqnarray*} 
{\zeta}_*^{(h)}(y; z) &=& 
\dint{K_0}\, \abs{\det(h \cdot y)}^{\sum_{i=1}^n\,(s_i + \ve_i)} \prod_{i=1}^{n-1}\, \abs{\what{d}_{n-i}(h^{* -1}\cdot y^{-1})}^{s_i+\ve_i} dh\\
&=&
\abs{\det(y^{-1})}^{-\sum_i(s_i + \ve_i)} \cdot \dint{K_0} \prod_{i=1}^{n-1} 
   \abs{ \what{d}_i(h \cdot y^{-1})}^{s_{n-i}+\ve_i} dh\\
&=&
\zeta^{(h)}(y^{-1}; w),
\end{eqnarray*}
where $w$ is the $z$-variable corresponding to the $s$-variable $(s_{n-1}, \ldots, s_1, -(s_1+ \dots + s_n) + \frac{n}{2} + (n-1)\frac{\pi\sqrt{-1}}{\log q})$ under the relation (\ref{change of var}). Then  $w_i - w_j = z_{n-j+1} - z_{n-i+1}$ for $1 \leq i < j \leq n$, and $G_1(w) = G_1(z)$. 
Hence $G_1(z) \cdot \zeta_*^{(h)}(y; z) = G_1(w) \cdot \zeta^{(h)}(y^{-1}; w)$ is holomorphic and $S_n$-invariant.
\end{proof}

\slit
\noindent
{\bf 2.2.}
Hereafter till the end of \S \ref{sec:4}, we assume $k$ has odd residual characteristic, i.e. $q$ is odd.
In this subsection, we give the functional equation of $\omega(x;s)$ for $\tau \in W$.

\begin{thm}   \label{th: tau}
For general size $n$, the spherical function satisfies the functional equation
$$
\omega(x; z) = \omega(x; \tau(z)).
$$
\end{thm}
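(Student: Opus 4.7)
The plan is to prove the $\tau$-functional equation by reducing to the rank-one case $n=1$ and performing a direct computation there. First, observe that $\tau$ is the simple Weyl reflection along the long simple root $2z_n$ of the $C_n$ root system; its associated rank-one Levi subgroup of $G = U(j_{2n})$ is a copy of $U(j_2)$ embedded at the middle two coordinates $\{n, n+1\}$ (acting trivially elsewhere). Denote the corresponding compact subgroup by $K_1^{\mathrm{mid}} \subset K$. This structural fact suggests that $\tau$-invariance is controlled entirely by the spherical function on $X_1$.

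The reduction step exploits the following block-structure observation: for $i \leq n-1$, the relative invariant $d_i(y)$ depends only on the rows and columns indexed by $2n - i + 1, \ldots, 2n$, all of which are $\geq n+2$. Hence $d_i(k_1 \cdot y) = d_i(y)$ for every $k_1 \in K_1^{\mathrm{mid}}$ and $i \leq n-1$. Using a suitable Iwasawa-type decomposition $K = K' \cdot K_1^{\mathrm{mid}}$ adapted to the maximal parabolic associated with $\tau$, one factors
\begin{equation*}
\omega(x; s) = \int_{K'} \prod_{i < n} |d_i(k' \cdot x)|^{s_i + \varepsilon_i} \left( \int_{K_1^{\mathrm{mid}}} |d_n(k_1 k' \cdot x)|^{s_n + \varepsilon_n}\, dk_1 \right) dk'.
\end{equation*}
The inner integral is, after identifying the $\{n, n+1\}$-sub-block of $k' \cdot x$ and normalizing by the remaining coordinates, a rank-one spherical function evaluation on $X_1$. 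Thus the $s_n$-dependence—equivalently, the $z_n$-dependence—is governed by the $n=1$ case, and $\tau$-invariance reduces to it.

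For the rank-one case, Proposition~\ref{Cartan n=1} with $q$ odd gives representatives $x_\ell = \mathrm{diag}(\pi^\ell, \pi^{-\ell})$, $\ell \geq 0$. For $k = \twomatrix{a}{b}{c}{d} \in K_1$, one has $(k \cdot x_\ell)_{22} = \pi^\ell N(c) + \pi^{-\ell} N(d)$, so
\begin{equation*}
\omega(x_\ell; s) = \int_{K_1} \bigl| \pi^\ell N(c) + \pi^{-\ell} N(d) \bigr|^{s + \varepsilon}\, dk.
\end{equation*}
Parametrizing $K_1$ via the Bruhat/Iwahori decomposition of $U(j_2)$ (a finite union of cells indexed by the residue field data on the second row), this integral collapses to a finite geometric series in $q^s$. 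A direct inspection of the resulting closed form yields $\omega(x_\ell; s) = \omega(x_\ell; -s)$, which in the $z$-variable is exactly $\tau$-invariance.

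The main obstacle is the reduction step: one must justify the decomposition $K = K' \cdot K_1^{\mathrm{mid}}$ with compatible Haar measure, and identify the inner $K_1^{\mathrm{mid}}$-integral rigorously with a rank-one spherical function. Although the $(n,n+1)$-sub-block of $k' \cdot x$ need not itself lie in $X_1$, its relevant structure—governed by the condition $x^* j x = j$ combined with $x = x^*$, which forces $jxj = x$—together with the hermitian-form constraints, gives enough rigidity to absorb the extraneous factors into $\prod_{i<n} |d_i|^{s_i + \varepsilon_i}$. Once this reduction is established, the $n=1$ calculation concludes the proof.
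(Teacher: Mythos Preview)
Your strategy coincides with the paper's: embed $K_1=U(j_2)$ at the middle coordinates $\{n,n+1\}$, factor the $K$-integral so that the $s_n$-dependence is carried by a rank-one spherical function, and then quote the $n=1$ computation. The rank-one calculation you sketch is essentially the paper's Proposition~\ref{prop: size 1}. However, two points in your reduction need correction.

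First, you do not need---and the paper does not use---a coset decomposition $K=K'\cdot K_1^{\mathrm{mid}}$. The paper simply inserts a redundant average over $K_1$ using left-invariance of Haar measure on $K$:
\[
\omega(x;s)=\int_{K_1}\!\int_K \abs{\bfd(\wt{h}k\cdot x)}^{s+\ve}\,dk\,dh
=\int_K \prod_{i<n}\abs{d_i(k\cdot x)}^{s_i+\ve_i}\Bigl(\int_{K_1}\abs{d_n(\wt{h}k\cdot x)}^{s_n+\ve_n}\,dh\Bigr)dk,
\]
with the \emph{full} $K$ on the outside. This avoids any factorization or measure-compatibility issue.

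Second, the step you correctly flag as ``the main obstacle'' is the crux of the argument, and your sketch does not resolve it. The paper's device is a specific relative-invariant construction: one defines a function $g(x,v)$ on $X\times M_{21}(k')$ (built from the lower-right $(n{+}1)\times(n{+}1)$ block of $x$), shows it is a relative invariant under the parabolic $P$ attached to $\tau$, and proves $g(x,v)=D(x)[v]$ for a hermitian $2\times2$ matrix $D(x)$ with $D_1(x):=d_{n-1}(x)^{-1}D(x)\in X_1$ whenever $x\in X^{op}$. This yields the exact identity
\[
d_n(\wt{h}\cdot y)=d_{n-1}(y)\,d_1\bigl(h\cdot D_1(y)^{-1}\bigr),
\]
so the inner integral is precisely $\abs{d_{n-1}(k\cdot x)}^{s_n+\ve_n}\,\omega^{(1)}(D_1(k\cdot x)^{-1};s_n)$. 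Your appeal to ``$jxj=x$'' is incorrect (for $x\in X$ the relation $x^*=x$ together with $x^*j_{2n}x=j_{2n}$ gives $j_{2n}xj_{2n}=x^{-1}$, not $x$), and ``enough rigidity'' does not substitute for the construction of $D_1(x)\in X_1$. Once that identification is made, $\omega^{(1)}(\,\cdot\,;s_n)=\omega^{(1)}(\,\cdot\,;-s_n)$ gives $\omega(x;s)=\omega(x;s_1,\ldots,s_{n-2},s_{n-1}+2s_n,-s_n)$, which is exactly $\tau$ in the $z$-variable.
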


\bigskip
First we consider for $\omega^{(1)}(x; s)$, the case of size $n = 1$, where $z = -s$ and $\tau$ acts as $\tau(s) = -s$.
 
\begin{prop} \label{prop: size 1}
For $x_\ell = \twomatrix{\pi^{\ell}}{0}{0}{\pi^{-\ell}} \in X_1, \; \ell \geq 0$, one has
$$
\omega^{(1)}(x_\ell; s) = \frac{(-1)^\ell q^{-\frac{\ell}{2}}}{1 + q^{-1}} \times \left( 
\frac{q^{\ell s}(1 - q^{-2s-1})}{1-q^{-2s}} + \frac{q^{-\ell s}(1 - q^{2s-1})}{1-q^{2s}} \right),
$$
in particular, $\omega^{(1)}(x; s)$ is holomorphic 
on $\C$ 
and satisfies the functional equation 
$$
\omega^{(1)}(x; s) = \omega^{(1)}(x; -s).
$$
\end{prop}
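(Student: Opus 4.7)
The plan is to evaluate the defining integral
\[
\omega^{(1)}(x_\ell;s) = \int_K |d_1(k\cdot x_\ell)|^{s+\ve}\,dk
\]
directly, and then to read off holomorphy and the functional equation from the resulting closed form.

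A direct matrix computation gives $d_1(k\cdot x_\ell)=(kx_\ell k^*)_{22}=N(c)\pi^\ell+N(d)\pi^{-\ell}$, where $(c,d)$ is the second row of $k$. Since $|\psi_1(p)|=1$ for every $p\in K\cap B$, the integrand is invariant under left translation by $K\cap B$ and the integral descends to $(K\cap B)\backslash K$. To parameterize this coset space, note that the unitary condition forces the second row of any $k\in K$ to satisfy $c\ol{d}+\ol{c}d=0$ (which one obtains by eliminating $a,b$ from the identities provided by $k^*j_2k=j_2$), while $K\cap B$ acts on second rows by $\calO_{k'}^\times$-scaling. This yields two families of coset representatives: Type~A, $k_\xi=\twomatrix{1}{0}{\xi}{1}$ for $\xi\in\sqrt{\ve}\,\calO_k$, corresponding to second rows with $d\in\calO_{k'}^\times$; and Type~B, $k'_\eta=\twomatrix{0}{1}{1}{\eta}$ for $\eta\in\sqrt{\ve}\pi\,\calO_k$, corresponding to $d\in\pi\calO_{k'}$. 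Identifying $(K\cap B)\backslash K\cong(G/B)(k)\cong\mathbb{P}^1(k)$ and comparing with its natural $K$-invariant probability measure gives masses $\mu_A=1/(1+q^{-1})$ and $\mu_B=q^{-1}/(1+q^{-1})$.

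On Type~A, writing $\xi=\sqrt{\ve}\,u$ with $u\in\calO_k$ one has $N(\xi)=-\ve u^2$ of non-negative valuation; since $\ell\ge 1$ the term $\pi^{-\ell}$ dominates, giving $|d_1|=q^\ell$ identically. On Type~B, with $\eta=\sqrt{\ve}\pi v$ one has $d_1=\pi^\ell-\ve\pi^{2-\ell}v^2$; splitting by $j=v_\pi(v)$ gives $|d_1|=q^{\ell-2-2j}$ for $j\le\ell-2$ and $|d_1|=q^{-\ell}$ for $j\ge\ell-1$, with the marginal case $j=\ell-1$ using the hypothesis that $\ve\bmod\pi$ is a nonsquare to exclude extra cancellation in $1-\ve u'^2$. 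Using $q^\ve=-q^{-1/2}$ (which follows from the imaginary piece $\pi\sqrt{-1}/\log q$ of $\ve$), the Type~A contribution is $\mu_A\,q^{\ell(s+\ve)}=\frac{(-1)^\ell q^{-\ell/2+\ell s}}{1+q^{-1}}$. The Type~B contribution is a finite geometric series over the shells $\{v_\pi(v)=j\}$ that telescopes to
\[
\frac{(-1)^\ell q^{-\ell/2}}{1+q^{-1}}\left[(1-q^{-1})\frac{q^{(\ell-2)s}-q^{-\ell s}}{1-q^{-2s}}+q^{-\ell s}\right].
\]
Adding the two contributions and combining over the common denominator $(1-q^{-2s})(1-q^{2s})$ recovers the claimed closed form.

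Holomorphy on $\C$ then follows because at $q^{2s}=1$ each summand acquires a factor $(q^{\ell s}-q^{-\ell s})(1-q^{-1})$ in the numerator that vanishes together with the denominator, so the apparent poles cancel. The functional equation $s\mapsto -s$ is manifest since it merely interchanges the two summands. The main difficulty is bookkeeping: correctly normalizing the invariant measures $\mu_A,\mu_B$, tracking the $(-1)^\ell q^{-\ell/2}$ prefactor that comes from $q^\ve=-q^{-1/2}$, and verifying the nonsquare hypothesis at the marginal valuation so that the Type~B shell computation telescopes cleanly.
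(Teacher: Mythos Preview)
Your argument is correct and, like the paper's, proceeds by direct evaluation of the defining integral through an explicit decomposition of $K$. The difference is organisational: you first pass to the quotient $(K\cap B)\backslash K\cong B\backslash G\cong\mathbb{P}^1(k)$, parameterised by the bottom row of $k$ up to $\calO_{k'}^\times$--scaling, which makes your Type~A contribute the single term $\mu_A\,q^{\ell(s+\ve)}$. The paper instead parameterises all of $K$ via $K=K_{1,1}\sqcup K_{1,2}$ (an Iwahori--Bruhat type decomposition) and must evaluate a nontrivial geometric sum on \emph{each} piece; the two partitions cut across one another (e.g.\ elements of $K_{1,1}$ with $1+uv\in(\pi)$ lie in your Type~B). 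Your split is better adapted to the integrand and gives a shorter computation, at the cost of having to identify the pushforward measure on the flag variety rather than on $K$ itself.

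Two small points. First, your restriction ``since $\ell\ge1$'' in Type~A omits the case $\ell=0$ from the statement: there $d_1=1-\ve u^2$, and $|d_1|=1$ follows from exactly the non-square hypothesis on $\ve\bmod\pi$ that you invoke for Type~B at the marginal shell $j=\ell-1$. Second, your Type~B telescoping uses implicitly that, in the chart $\eta=\sqrt{\ve}\,\pi v$, the pushforward of normalised Haar measure on $K$ restricts to $\mu_B$ times Haar on $v\in\calO_k$; this is correct (it follows for instance from the factorisation $k=p\cdot w\cdot\left(\begin{smallmatrix}1&\eta\\0&1\end{smallmatrix}\right)$ with $p\in K\cap B$ and the unimodularity of $K$), but a sentence of justification would make the write-up self-contained.
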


\begin{proof}
It is easy to see
\begin{eqnarray*}
&&
K = K_1 = K_{1,1} \sqcup K_{1,2}, \qquad\mbox{where} \nonumber \\
&&
\hspace*{.7cm}
K_{1,1} = \set{\twomatrix{\alp}{0}{0}{\alp^{*-1}}\twomatrix{1}{v/\sqrt{\eps} } {u\sqrt{\eps}}{1+uv} }{ \alp \in \calO_{k'}^\times, \; u, v \in \calO_k}, \nonumber \\
&&
\hspace*{.7cm}
K_{1,2} = \set{ \twomatrix{\alp}{0}{0}{\alp^{*-1}}\twomatrix{\pi u \sqrt{\eps}}{1+\pi uv}{1}{v/\sqrt{\eps} } }
{ \alp \in \calO_{k'}^\times, \; u, v \in \calO_k}, \label{K1}
\end{eqnarray*}
and $vol(K_{1,1}) = \frac{1}{1+q^{-1}}$ and $vol(K_{1,2}) = \frac{q^{-1}}{1+q^{-1}}$ with respect to the measure $dh$ on $K$ normalized by $vol(K) = 1$.
Let $x_\ell$ be as above. 
For $h \in K_{1,1}$ written as above, we have
\begin{eqnarray*}
&&
d_1(h \cdot x_\ell) = N(\alp)^{-1}\pi^{-\ell}((1+uv)^2-\pi^{2\ell}u^2 \ve), \\
&&
v_\pi(d_1(h \cdot x_\ell)) = \left\{ \begin{array}{ll} 
    -\ell & \mbox{if } u \in (\pi),\\
    -\ell + 2\min\{v_\pi(1+uv), \, \ell \} & \mbox{if } u \in \calO_k^\times,
    \end{array} \right.
\end{eqnarray*}
and
\begin{eqnarray*}
\lefteqn{\int_{K_{1,1}}\, \abs{d_1(h\cdot x_\ell)}^{s-\frac12+\frac{\pi\sqrt{-1}}{\log q}} dh}\\
&=&
\frac{(-1)^\ell q^{\ell(s-\frac12)}}{1+q^{-1}} \\
&&\times
\left(  
q^{-1} + (1-q^{-1})\left(  (1-q^{-1}) + \sum_{k=1}^{\ell-1} q^{-2k(s-\frac12)}q^{-k}(1-q^{-1}) + q^{-2\ell(s-\frac12)}q^{-\ell}\right) \right)\\
&=&
\frac{(-1)^\ell q^{-\frac{\ell}{2}}}{1+q^{-1}} \left( q^{\ell s -1} + (1-q^{-1}) q^{-\ell s} + (1-q^{-1})^2 \frac{q^{\ell s} - q^{-\ell s}}{1 - q^{-2s}} \right).
\end{eqnarray*}
For $h \in K_{1,2}$ written as above, we have
\begin{eqnarray*}
&&
d_1(h \cdot x_\ell) = N(\alp)^{-1} \pi^{-\ell}(\pi^{2\ell} - v^2/\ve), \\
&&
v_\pi(d_1(h \cdot x_\ell)) = -\ell + 2\min\{v_\pi(v), \ell\},
\end{eqnarray*}
and
\begin{eqnarray*}
\lefteqn{\int_{K_{1,2}}\, \abs{d_1(h\cdot x_\ell)}^{s-\frac12+\frac{\pi\sqrt{-1}}{\log q}} dh}\\
&=&
\frac{(-1)^\ell q^{\ell(s-\frac12)}q^{-1}}{1+q^{-1}} \left(  
\sum_{k=0}^{\ell-1} q^{-2k(s-\frac12)}q^{-k}(1-q^{-1}) + q^{-2\ell(s-\frac12)}q^{-\ell} \right)\\
&=&
\frac{(-1)^\ell q^{-\frac{\ell}{2}}}{1+q^{-1}} \left( 
q^{-\ell s -1} + (q^{-1}-q^{-2}) \frac{q^{\ell s}-q^{-\ell s}}{1-q^{-2s}} \right).
\end{eqnarray*}
Hence we obtain
\begin{eqnarray*}
\omega(x_\ell; s) 
&=& 
\int_{K_1} \abs{d_1(k \cdot x_\ell)}^{s-\frac12-\frac{\pi\sqrt{-1}}{\log q}}dk\\
&=& 
\frac{1}{1+q^{-1}}\left\{   \sum_{r = 0}^\ell (-1)^\ell q^{-(2r-\ell)(s-\frac12)}q^{-r}(1-q^{-1}) 
+ (-1)^\ell q^{-\ell(s-\frac12)}q^{-(\ell+1)} \right\}\\
&& 
+ \frac{q^{-1}}{1+q^{-1}} (-1)^\ell \cdot q^{\ell(s-\frac12)}  \\
&=&
\frac{(-1)^\ell q^{-\frac{\ell}{2}}}{1 + q^{-1}} \left( 
\frac{q^{\ell s}(1 - q^{-2s-1})}{1-q^{-2s}} + \frac{q^{-\ell s}(1 - q^{2s-1})}{1-q^{2s}} \right)\\
&=&
\frac{(-1)^\ell q^{-\frac{\ell}{2}}}{1 + q^{-1}} \frac{1}{q^s-q^{-s}}\left(q^{(\ell+1)s} - q^{-(\ell+1)s} - q^{-1}(q^{(\ell-1)s}-q^{-(\ell-1)s})\right),
\end{eqnarray*}
which is holomorphic and invariant under $s \mapsto -s$. Since the set $\set{x_\ell}{\ell \geq 0}$ forms a set of complete representatives of $K_1 \backslash X_1$ (cf. Proposition~\ref{Cartan n=1}), we conclude the proof.
\end{proof}

\bigskip
We assume $n \geq 2$. 
Set 
$$
w_\tau = \begin{pmatrix}
1_{n-1} & & \\
& \twomatrixminus{0}{1}{1}{0} & \\
& & 1_{n-1}
\end{pmatrix},
$$
and take the standard parabolic subgroup $P$ attached to $\tau$
\begin{align} 
\label{P-tau}
\lefteqn{P =  B \cup B w_\tau B }  \\
&= \notag
\left\{ \left. 
\left( \begin{array}{cc|cc} 
      {q'} & & & \\
      & a & b & \\
      \hline
      & c & d & \\
      & & & q
\end{array} \right)
\left( \begin{array}{cc|cc} 
       1_{n-1} & \alp & & \\
               & 1 & 0 &  \\
       \hline
       &  0 & 1 & -\alp^*j\\
       & & & 1_{n-1}
\end{array} \right) 
\left( \begin{array}{cc|cc} 
       1_{n-1} & & \beta & \gamma j\\
               & 1 & 0 & -\beta^* j \\
       \hline
       &  0 & 1 & \\
       & & & 1_{n-1}
\end{array} \right) \in G
\right\vert \right. \\
& \hspace*{1cm} \notag
\left. \vphantom{
 \twomatrixplus
 {\twomatrixminus{1_{n-1}}{\alp}{}{1}} {} {}
 {\twomatrixminus{1_{n-1}}{}{-\alp^*}{1}}
} 
\begin{array}{l} 
q \mbox{ is upper triangular in }GL_{n-1}(k'), \; q' = jq^{* -1}j\\
\twomatrix{a}{b}{c}{d} \in U(j_2), \;
 \alpha, \beta \in M_{n-1,1}(k'),\\
 \gamma \in M_{n-1}(k'), \; \gamma + \gamma^* = 0
  \end{array}
 \right\},  
\end{align}
where $j = j_{n-1}$ and each empty place in the above expression means zero-entry.

We consider the following action of $P' = P \times GL_1(k')$ on $X' = X \times V$ with $V = M_{21}(k')$:
$$
(p,r) \star (x,v) = (p\cdot x, \rho(p)vr^{-1}), \quad (p,r) \in P', \; (x,v) \in X',
$$
where $\rho(p) = \twomatrix{a}{b}{c}{d}$ for the decomposition of $p \in P$ as in (\ref{P-tau}).
We define
\begin{equation} \label{g(x,v)}
g(x,v) = \det\left[ \twomatrixplus{-v_2\; v_1}{0}{0}{1_{n-1}}\cdot x_{(n+1)} \right],\qquad (x, v) \in X', \; v = \twovector{v_1}{v_2}, 
\end{equation}
where $x_{(n+1)}$ is the lower right $(n+1)$ by $(n+1)$ block of $x$.
Then we have the following.

\begin{lem}
Let $g(x,v)$ be the function on $X'$ defined by (\ref{g(x,v)}).

{\rm (i)} $g(x,v)$ is a $P'$-relative invariant on $X'$ associated with the $P'$-rational character $\wt{\psi}(p,r) = \psi_{n-1}(p)N(r)^{-1}$, and $g(x,v_0) = d_n(x)$ with $v_0 = {}^t(1 \, 0)$.

{\rm (ii)} $g(x, v)$ is expressed as 
$g(x,v) = D(x)[v]$
by some hermitian matrix $D(x)$ of size $2$. For $x \in X^{op}$, $D_1(x) = d_{n-1}(x)^{-1} D(x) $ belongs to $X_1$.
\end{lem}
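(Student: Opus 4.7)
The plan is to verify each statement by a block-matrix computation using the structure of $P$ in~(\ref{P-tau}). For part~(i), setting $v_0 = {}^t(1,0)$ makes $A(v) := \left(\begin{smallmatrix} -v_2 & v_1 & 0 \\ 0 & 0 & 1_{n-1}\end{smallmatrix}\right)$ into the matrix whose action via $A(v_0)\, x_{(n+1)}\, A(v_0)^*$ returns exactly the lower-right $n\times n$ block of $x$, so $g(x, v_0) = d_n(x)$. For the $P'$-relative invariance, I will exploit that $P$ preserves the flag $V_{n+1}\supset V_{n-1}$ (spans of the last $n+1$ and $n-1$ standard basis vectors): this forces $p_{(n+1)}$, the lower-right $(n+1)\times(n+1)$ block of $p\in P$, to be block upper triangular of the form $\bigl(\begin{smallmatrix} \rho(p) & R(p) \\ 0 & q\end{smallmatrix}\bigr)$, with $\rho(p)\in U(j_2)$ and $q$ upper-triangular in $GL_{n-1}(k')$, and consequently $(p\cdot x)_{(n+1)} = p_{(n+1)}\, x_{(n+1)}\, p_{(n+1)}^*$.

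Writing $u(v) = (-v_2, v_1) = v^T J$ with $J = \bigl(\begin{smallmatrix}0&1\\-1&0\end{smallmatrix}\bigr)$, the $2\times 2$ identity $\rho(p)^T J \rho(p) = (\det\rho(p))\, J$ gives the key relation
\[
u(\rho(p)v r^{-1})\, \rho(p) = \frac{\det\rho(p)}{r}\, u(v),
\]
which produces the block factorization
\[
A(\rho(p)v r^{-1})\, p_{(n+1)} \;=\; M(p,v,r)\cdot A(v),\qquad \det M = \frac{\det\rho(p)\cdot\det q}{r}.
\]
Taking determinants of $g(p\cdot x, \rho(p)vr^{-1}) = \det\bigl(M\, A(v)\, x_{(n+1)}\, A(v)^*\, M^*\bigr)$ yields $N(\det M)\cdot g(x,v) = N(\det\rho(p))\, N(\det q)\, N(r)^{-1}\, g(x,v)$. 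Since $\rho(p)\in U(j_2)$ forces $N(\det\rho(p)) = 1$, and $\psi_{n-1}(p) = N(\det q)$ (the two unipotent factors in~(\ref{P-tau}) act trivially on the lower-right $(n-1)\times(n-1)$ minor), the character comes out as $\wt\psi(p,r) = \psi_{n-1}(p)\, N(r)^{-1}$.

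For part~(ii), I apply the Schur complement: writing $y = x_{(n+1)}$ in the $2 + (n-1)$ block form dictated by $A(v)$, with $y_{22}$ the lower-right $(n-1)\times(n-1)$ block (so $\det y_{22} = d_{n-1}(x)$), one has, for $y_{22}$ invertible,
\[
g(x,v) = d_{n-1}(x)\cdot u(v)\, S(x)\, u(v)^*,\qquad S(x) = y_{11} - y_{12}\, y_{22}^{-1}\, y_{21},
\]
a hermitian $2\times 2$ matrix. Substituting $u(v) = (-v_2, v_1)$ and matching coefficients of $v_i\bar v_j$ exhibits $g(x,v) = D(x)[v]$ with $D(x)$ hermitian. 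To show $D_1(x) := d_{n-1}(x)^{-1}D(x)$ lies in $X_1$ for $x\in X^{op}$, I use the $P$-equivariance $D_1(p\cdot x) = \rho(p)^{-*}\, D_1(x)\, \rho(p)^{-1}$ (an immediate consequence of part~(i)): a direct check at $x = 1_{2n}$ gives $D_1(1_{2n}) = 1_2\in X_1$, and since the $U(j_2)$-action $y\mapsto hyh^*$ preserves $X_1$, this forces $D_1(p\cdot 1_{2n})\in X_1$ for every $p\in P$. A density argument via the Bruhat decomposition (the $P$-orbit through $1_{2n}$ is Zariski-dense in $G\cdot 1_{2n}$), combined with Theorem~\ref{thm: G-orbits} to handle the second $G$-orbit, extends the conclusion polynomially to all of $X^{op}$.

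The main obstacle will be the factorization identity in part~(i): tracking how the off-diagonal block $R(p)$ of $p_{(n+1)}$ combines contributions from both the middle and outer unipotent factors of~(\ref{P-tau}), and recognizing that $\det\rho(p)/r$ emerges as the correct scalar from $u(\rho(p)v/r)\,\rho(p)$, requires care. An alternative route for the final step of part~(ii) would bypass the Zariski-density argument by directly verifying $\det D_1(x) = d_{n+1}(x)/d_{n-1}(x) = 1$ and $\operatorname{tr}(D_1(x) j_2) = 0$ from the block equations obtained by expanding $x j_{2n} x = j_{2n}$ in the $(n-1, 2, n-1)$-partition of $\{1,\dots,2n\}$.
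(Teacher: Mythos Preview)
Your argument for part~(i) is essentially the paper's: both of you recognize that $(p\cdot x)_{(n+1)} = p_{(n+1)}\cdot x_{(n+1)}$ with $p_{(n+1)}$ block upper-triangular, and both exploit the adjugate identity for $\rho(p)$ (your formulation via $\rho(p)^T J \rho(p) = \det\rho(p)\,J$) to factor $A(\rho(p)v r^{-1})\,p_{(n+1)}$ through $A(v)$ and read off the character.

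For part~(ii) your route diverges from the paper's. The paper observes directly that every $x\in X^{op}$ can be written as $b\cdot y$ with $b\in B$ and $y$ \emph{diagonal} in $X^{op}$ (since the lower-right $n\times n$ block of $x$ is nondegenerate hermitian, the $B_0$-action diagonalizes it, and then $N_B$ clears the off-diagonal blocks). It then computes $D_1(y)=\mathrm{Diag}(a_n,a_n^{-1})\in X_1$ explicitly at such diagonal $y$, and transports by the $B$-equivariance $D_1(b\cdot y)=\rho(b)^{*-1}\cdot D_1(y)$ coming from part~(i). This is entirely elementary and takes place over $k$.

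Your Schur-complement computation is a nice way to identify $D(x)$ concretely, and your density argument is valid: $B(k)\cdot 1_{2n}$ is Zariski-dense in $X$ because $B(k)$ is Zariski-dense in $B(\bar k)$ and $B(\bar k)\cdot 1_{2n}=X^{op}(\bar k)$ is the open $B$-orbit. But note that this density already covers \emph{all} of $X^{op}$, including the second $G(k)$-orbit, so invoking Theorem~\ref{thm: G-orbits} is superfluous. The trade-off is clear: the paper's reduction to diagonals is more direct and avoids any algebro-geometric input, while your approach packages the verification into a single base-point check plus a density principle (and your suggested alternative of verifying $\det D_1(x)=1$ and $\mathrm{tr}(D_1(x)j_2)=0$ from the block expansion of $xj_{2n}x=j_{2n}$ would be closer in spirit to the paper's concreteness).
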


\begin{proof}
(i) It is easy to see that $g(x, v_0) = d_n(x)$ and $g((1,r)\star(x,v)) = N(r)^{-1}g(x,v)$. 
Take an element $p$ in $P$ and write as  
$$
p = \left( \begin{array}{c|c|c} 
q' & \alp' & \gamma \\
\hline
0 & \rho(p) & \alp\\
\hline
0 & 0 & q
\end{array} \right), \quad (q, q', \gamma \in M_{n-1}, \; \alp,  {}^t\alp' \in M_{2,n}, \; \rho(p) = \twomatrix{a}{b}{c}{d} \in U(j_2)).
$$
Then
\begin{eqnarray*}
g((p,1)\star(x,v)) &=& \det\left[ 
\twomatrixplus{(-v_2\, v_1)\twomatrix{d}{-b}{-c}{a}} {0}{0}{1_{n-1}}\cdot
\twomatrixplus{\rho(p)}{\alp}{0}{q} \cdot x_{(n+1)}
 \right]\\
 &=&
\det\left[ 
\twomatrixplus{u(-v_2\, v_1)}{\beta}{0}{q} \cdot x_{(n+1)} \right]\\
&=&
\det\left[ \twomatrixplus{u}{\beta}{0}{q} \cdot \twomatrixplus{-v_2\; v_1}{0}{0}{1_{n-1}}\cdot x_{(n+1)} \right]\\
&=&
N(\det(q))g(x,v) \\
&=& 
\psi_{n-1}(p)g(x,v),
\end{eqnarray*}
where $u = \det(\rho(p)) \in \calO_{k'}^1 \left( = \set{u \in \calO_{k'}^\times}{N(u) = 1} \right)$ and $\beta = (-v_2\, v_1)\twomatrix{d}{-b}{-c}{a} \alp \in M_{1n}(k')$.
Hence we see that
\begin{eqnarray} \label{P-action}
g((p, r) \star (x, v)) = \psi_{n-1}(p)N(r)^{-1} g(x,v), \quad (p,r) \in P'.
\end{eqnarray}

\medskip
(ii) Since $g(x,v)$ is a linear form with respect to $v_1, v_2$ and $v_1^*, v_2^*$ and $g(x,v)^* = g(x,v)$, it is written as $D(x)[v]$ for some hermitian matrix $D(x)$ of size $2$. 
For $x = Diag(a_1^{-1}, \ldots, a_n^{-1}, a_n, \ldots, a_1) \in X^{op}$, we have
\begin{eqnarray*}
g(x, v) &=& 
\det\left[ 
\twomatrixplus{-v_2\; v_1}{0}{0}{1_{n-1}}\cdot Diag(a_n^{-1}, a_n, \ldots, a_1) \right]\\
&=&
(a_1 \cdots a_n) v_1v_1^* + (a_1\cdots a_{n-1}a_n^{-1}) v_2v_2^*.
\end{eqnarray*}
Hence, for any diagonal $x \in X^{op}$, we have 
\begin{eqnarray}
D(x) &=& \twomatrix{d_n(x)}{0}{0}{d_n(x)^{-1}d_{n-1}(x)^2} \nonumber \\
D_1(x) &=& d_{n-1}(x)^{-1} D(x) \in X_1. \label{diag x}
\end{eqnarray}
For any $x \in X^{op}$, we have $x = b\cdot y$ for some diagonal $y \in X^{op}$ and $b \in B$.
Since
\begin{eqnarray*}
(x, v) = (b, 1)\star (y, \rho(b)^{-1}v),
\end{eqnarray*}
we have by (\ref{P-action}) and (\ref{diag x}),
\begin{eqnarray*}
D(x) &=& \psi_{n-1}(b) \left( \rho(b)^{* -1} \cdot D(y)  \right) = \psi_{n-1}(b)d_{n-1}(y) \left( \rho(b)^{* -1} \cdot D_1(y)  \right)\\
& = &
d_{n-1}(x) \left( \rho(b)^{* -1} \cdot D_1(y) \right),
\end{eqnarray*}
and $D_1(x) = \rho(b)^{* -1} \cdot D_1(y) \in X_1$, since $\rho(b)^{* -1} \in G_1 = U(j_2)$.
\end{proof}

\bigskip
\begin{proof}[Proof of Theorem~\ref{th: tau}]
By the embedding
\begin{eqnarray}
K_1 = U(j_2) \hookrightarrow K = K_n, \; h \longmapsto \wt{h} = 
\begin{pmatrix}
1_{n-1} & & \\
& h & \\
& & 1_{n-1}
\end{pmatrix},
\end{eqnarray}
we see
\begin{eqnarray}
\omega(x;s) 
&=& 
\int_{K_1}dh \int_{K} \abs{\bfd(k\cdot x)}^{s+\ve}dk \nonumber \\
&=& 
\int_{K_1}dh \int_{K} \abs{\bfd(\wt{h}k\cdot x)}^{s+\ve}dk \nonumber \\
&=&
\int_{K}\, \prod_{i<n}\, \abs{d_i(k\cdot x)}^{s_i + \ve_i} \int_{K_1}\,\abs{d_n(\wt{h}k \cdot x)}^{s_n+\ve_n} dh dk. \label{first stage}
\end{eqnarray}
For $y \in X^{op}$, we have
\begin{eqnarray*}
d_n(\wt{h}\cdot y) 
&=& g(\wt{h}\cdot y, v_0) = g((\wt{h},1) \star (y, h^{-1}v_0)) \\
&=&
g(y, h^{-1}v_0) \qquad (\mbox{since }\; \psi_{n-1}(\wt{h}) = 1)\\
&=&
D(y)[h^{-1}v_0] = \what{d_1}(h^{* -1}\cdot D(y)) = d_{n-1}(y) \what{d_1}(h^{* -1} \cdot D_1(y)),
\end{eqnarray*}
where $\what{d_1}(\cdot)$ is the $(1,1)$-component of $\cdot$\,. Since $\what{d_1}(x_1) = d_1(x_1^{-1})$ for $x_1 \in X_1$, we have
\begin{eqnarray*}
d_n(\wt{h}\cdot y) &=&
d_{n-1}(y) d_1( (h^{* -1} \cdot D_1(y) )^{-1} ) = d_{n-1}(y)d_1(h \cdot D_1(y)^{-1}).
\end{eqnarray*}
Returning to (\ref{first stage}), we have
\begin{eqnarray*}
\omega(x;s) &=& 
\int_{K}\, \prod_{i<n}\, \abs{d_i(k\cdot x)}^{s_i + \ve_i} \int_{K_1}\,\abs{d_{n-1}(k\cdot x) d_1(h \cdot D_1(k\cdot x)^{-1})}^{s_n+\ve_n} dh dk\\
&=&
\int_{K}\, \prod_{i<n}\, \abs{d_i(k\cdot x)}^{s_i + \ve_i} \abs{d_{n-1}(k\cdot x)}^{s_n+\ve_n} \omega^{(1)}(D_1(k\cdot x)^{-1}; s_n) dk, 
\end{eqnarray*}
where $\omega^{(1)}(y; s)$ is the spherical function of size $n = 1$. Then, by Proposition~\ref{prop: size 1}, we obtain
\begin{eqnarray*}
\omega(x;s)  
&=&
\omega(x; s_1, \ldots, s_{n-2}, s_{n-1}+2s_n, -s_n),
\end{eqnarray*}
which shows in $z$-variable
\begin{eqnarray*}
\omega(x;z) = \omega(x; \tau(z)), \quad \tau(z) = (z_1, \ldots, z_{n-1}, -z_n),
\end{eqnarray*}
and we conclude the proof.  
\end{proof}

\slit
\noindent
{\bf 2.3.}
Since our spherical function $\omega(x;s)$ satisfies the same functional equations with respect to $S_n$ and $\tau$ (Theorem~\ref{th: feq Sn} and Theorem~\ref{th: tau}) as $\omega_T(x;s)$ in \cite{Oda} does, we have the same functional equations with respect to $W$ also. For the proof, we may follow a similar line as in \cite[\S 2.3]{Oda}, so we omit the details. 

We denote by $\Sigma$ the set of roots of $G$ with respect to the maximal $k$-split torus of $G$ contained in $B$ and by $\Sigma^+$ the set of positive roots with respect to $B$. We may understand $\Sigma$ as a subset in $\Z^n$, and set 
\begin{eqnarray}
&&
\Sigma^+ = \Sigma^+_s \cup \Sigma^+_\ell, \label{short and long}\\
&&
\Sigma^+_s = \set{e_i - e_j, \; e_i + e_j}{1 \leq i < j \leq n}, \quad \Sigma^+_\ell = \set{2e_i}{1 \leq i \leq n},\nonumber
\end{eqnarray}
where $e_i$ is the $i$-th unit vector in $\Z^n, \; 1 \leq i \leq n$.
%
We define a pairing on $\Z^n \times \C^n$ by
$$
\pair{t}{z} = \sum_{i=1}^n t_iz_i, \qquad (t \in \Z^n, \; z \in \C^n),
$$
which satisfies 
\begin{eqnarray*} \label{W-invariance}
\pair{\alp}{z} = \pair{\sigma(\alp)}{\sigma(z)}, \qquad (\alp \in \Sigma, \; z \in \C^n, \; \sigma \in W).
\end{eqnarray*}

\bigskip
\begin{thm} \label{th: feq}
The spherical function $\omega(x;z)$ satisfies the following functional equation
\begin{eqnarray} \label{Gamma-sigma}
\omega(x; z) = \Gamma_\sigma(z) \cdot \omega(x; \sigma(z)),
\end{eqnarray}
where 
\begin{eqnarray*} \label{sigma-factors}
\Gamma_\sigma(z) = \dprod{\alp \in \Sigma^+_s(\sigma)}\,
\frac{1 - q^{\pair{\alp}{z}-1}}{q^{\pair{\alp}{z}} - q^{-1}}, \quad
\Sigma^+_s(\sigma) = \set{\alp \in \Sigma^+_s}{-\sigma(\alp) \in \Sigma^+}. 
\end{eqnarray*}
\end{thm}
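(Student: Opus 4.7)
The plan is to reduce the general functional equation to the known cases of generators of $W$, and build up the factor $\Gamma_\sigma$ by iteration, verifying compatibility via a cocycle property.

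The Weyl group $W \cong S_n \ltimes C_2^n$ of type $C_n$ is generated by the simple transpositions $s_1,\ldots,s_{n-1} \in S_n$ together with the sign change $\tau : (z_1,\ldots,z_{n-1},z_n) \mapsto (z_1,\ldots,z_{n-1},-z_n)$. For these generators I would verify the claim by direct computation. For $\tau$, one checks that $\tau$ preserves the set $\Sigma_s^+$ of positive short roots (it fixes the $e_i \pm e_j$ with $j < n$, and swaps $e_i - e_n \leftrightarrow e_i + e_n$), so $\Sigma_s^+(\tau) = \emptyset$ and $\Gamma_\tau(z) = 1$, which agrees with Theorem~\ref{th: tau}. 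For $s_i$, the unique positive short root sent to a negative one is $e_i - e_{i+1}$, hence $\Gamma_{s_i}(z) = (1-q^{z_i-z_{i+1}-1})/(q^{z_i-z_{i+1}}-q^{-1})$. Theorem~\ref{th: feq Sn} gives $\omega(x;z) = [G_1(s_i(z))/G_1(z)]\,\omega(x;s_i(z))$, and since $s_i$ only permutes all the pairs $(j,k)$ in the product for $G_1$ except the pair $(i,i+1)$, the ratio $G_1(s_i(z))/G_1(z)$ collapses to a one-factor expression in the variable $u := q^{z_i - z_{i+1}}$, namely $(1-uq^{-1})/(u-q^{-1})$, which is exactly $\Gamma_{s_i}(z)$.

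Next I would establish the cocycle identity
\begin{equation*}
\Gamma_{\sigma'\sigma}(z) = \Gamma_\sigma(z)\,\Gamma_{\sigma'}(\sigma(z)), \qquad \sigma, \sigma' \in W,
\end{equation*}
which is what is needed to compose the functional equations. Setting $f(\alpha, z) = (1-q^{\pair{\alpha}{z}-1})/(q^{\pair{\alpha}{z}}-q^{-1})$, an elementary algebraic manipulation (both numerator and denominator of $f(\alpha,z)f(-\alpha,z)$ equal $1 - q^{\pair{\alpha}{z}-1} - q^{-\pair{\alpha}{z}-1} + q^{-2}$) shows $f(\alpha,z)f(-\alpha,z) = 1$. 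Using $\pair{\alpha}{\sigma(z)} = \pair{\sigma^{-1}\alpha}{z}$ one rewrites $\Gamma_{\sigma'}(\sigma(z)) = \prod_{\alpha \in \Sigma_s^+(\sigma')} f(\sigma^{-1}\alpha,z)$; for those $\alpha \in \Sigma_s^+(\sigma')$ such that $\sigma^{-1}\alpha \in -\Sigma_s^+$, the involution $f(-\beta,z) = f(\beta,z)^{-1}$ lets one replace the factor by the reciprocal over the positive representative. Combining with the standard Weyl-group identity that $\Sigma_s^+(\sigma'\sigma)$ equals the symmetric difference of $\Sigma_s^+(\sigma)$ and the positive representatives of $\sigma^{-1}\Sigma_s^+(\sigma')$ (which uses only that $W$ preserves short/long and the usual inversion-set calculation $\ell(\sigma'\sigma) = |\Sigma^+(\sigma)\,\triangle\,\sigma^{-1}\Sigma^+(\sigma')|$), the cocycle identity drops out.

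Finally, given an arbitrary $\sigma \in W$, write $\sigma = \sigma_r\cdots\sigma_1$ as a product of generators. Iterating the functional equations for the generators and applying the cocycle inductively gives
\begin{equation*}
\omega(x;z) = \Gamma_{\sigma_1}(z)\,\omega(x;\sigma_1(z)) = \cdots = \Gamma_\sigma(z)\,\omega(x;\sigma(z)),
\end{equation*}
as required. The main obstacle is the combinatorial verification in the cocycle step: bookkeeping the signs that arise when $\sigma^{-1}$ sends a positive root in $\Sigma_s^+(\sigma')$ to a negative root. The involutive identity $f(\alpha,z)f(-\alpha,z)=1$ is what makes the argument go through cleanly, and it is the fact that this identity holds only over \emph{short} roots (the restriction $\Sigma_s^+(\sigma)$ rather than $\Sigma^+(\sigma)$ in the definition of $\Gamma_\sigma$) that is consistent with the short-root cancellations already built into Theorems~\ref{th: feq Sn} and \ref{th: tau}.
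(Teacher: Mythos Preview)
Your proof is correct and follows the same route the paper indicates: reduce to the generators $s_1,\ldots,s_{n-1},\tau$ via Theorems~\ref{th: feq Sn} and~\ref{th: tau}, then propagate to all of $W$ by a cocycle argument for $\Gamma_\sigma$. The paper itself omits these details entirely and refers to \cite[\S2.3]{Oda}; you have supplied precisely the computation that reference would contain, including the key identity $f(\alpha,z)f(-\alpha,z)=1$ and the inversion-set bookkeeping $N_s(\sigma'\sigma) = \bigl(\sigma^{-1}N_s(\sigma')\cap\Sigma_s^+\bigr)\sqcup\bigl(N_s(\sigma)\setminus(-\sigma^{-1}N_s(\sigma'))\bigr)$.

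One small remark on your closing sentence: the identity $f(\alpha,z)f(-\alpha,z)=1$ is a formal consequence of the shape of $f$ and holds for any root $\alpha$, not only short ones. What is genuinely relevant is that $W$ preserves root lengths, so the inversion-set argument can be run entirely inside $\Sigma_s$; this is why $\Gamma_\sigma$ being defined over short roots alone is consistent with the cocycle.
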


\bigskip
The next theorem can be proved in a similar line to the proof of \cite[Theorem~2.9]{Oda}.

\bigskip
\begin{thm} \label{th: W-inv} 
The function $G(z) \cdot \omega(x; z)$ is holomorphic
 on $\C^n$ 
and $W$-invariant, in particular it is an element in $\C[q^{\pm z_1}, \ldots, q^{\pm z_n}]^{W}$, where
\begin{eqnarray*}
G(z) = \prod_{\alp \in \Sigma^+_s}\, \frac{1 + q^{\pair{\alp}{z}}}{1 - q^{\pair{\alp}{z}-1}}.
\end{eqnarray*}
\end{thm}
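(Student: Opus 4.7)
The plan is to establish the two claims---$W$-invariance and holomorphy of $G(z)\omega(x;z)$---separately, adapting the line of proof of \cite[Theorem~2.9]{Oda} to the tools developed in \S 2.1--2.2.

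\textbf{$W$-invariance.} I verify the cocycle identity $G(\sigma(z)) = \Gamma_\sigma(z)\,G(z)$ for every $\sigma\in W$; combining with the functional equation of Theorem~\ref{th: feq} then yields
\[
G(\sigma(z))\,\omega(x;\sigma(z)) = \Gamma_\sigma(z)\,G(z)\cdot\Gamma_\sigma(z)^{-1}\omega(x;z) = G(z)\,\omega(x;z).
\]
Partitioning $\sigma^{-1}\Sigma_s^+ = (\Sigma_s^+\setminus\Sigma_s^+(\sigma))\cup(-\Sigma_s^+(\sigma))$, the factors of $G$ indexed by $\Sigma_s^+\setminus\Sigma_s^+(\sigma)$ match on both sides of $G(\sigma(z))/G(z)$, so the cocycle check reduces to the one-variable identity
\[
\frac{F(-u)}{F(u)} = \frac{(1+q^{-u})(1-q^{u-1})}{(1-q^{-u-1})(1+q^u)} = \frac{1-q^{u-1}}{q^u-q^{-1}}, \qquad F(u) = \frac{1+q^u}{1-q^{u-1}},
\]
where $u=\pair{\alp}{z}$ ranges over $\alp\in\Sigma_s^+(\sigma)$; this is immediate by multiplying numerator and denominator through by $q^u$ and cancelling $1+q^u$.

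\textbf{Holomorphy.} I factor $G(z) = G_1(z)\cdot\bigl(G(z)/G_1(z)\bigr)$. The integral representation $\omega(x;z) = \int_K \zeta_*^{(h)}(D(k\cdot x);z)\,dk$ from the proof of Theorem~\ref{th: feq Sn}, together with the proposition immediately following Theorem~\ref{th: feq Sn} asserting holomorphy of $G_1(z)\zeta_*^{(h)}(y;z)$ on $\C^n$, shows that $G_1(z)\omega(x;z)$ is holomorphic on $\C^n$: the interchange of integration and the $z$-holomorphy check is justified by the compactness of $K$ and uniform degree control on $\zeta_*^{(h)}$ as a rational function of $q^{z_i}$ with continuous dependence on $D(k\cdot x)$. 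To absorb the remaining factor
\[
\frac{G(z)}{G_1(z)} = \prod_{1\leq i<j\leq n}\frac{1+q^{z_i+z_j}}{1-q^{z_i+z_j-1}},
\]
whose only poles lie on the hyperplanes $\{z_i+z_j = 1+\tfrac{2\pi\sqrt{-1}}{\log q}\Z\}$, I locate matching zeros of $G_1(z)\omega(x;z)$. Theorem~\ref{th: tau} gives $\omega(x;z) = \omega(x;\tau(z))$, hence $G_1(\tau(z))\omega(x;z) = G_1(\tau(z))\omega(x;\tau(z))$, being the pullback by $\tau$ of a holomorphic function, is itself holomorphic on $\C^n$. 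Since
\[
\frac{G_1(\tau(z))}{G_1(z)} = \prod_{i=1}^{n-1}\frac{(1+q^{z_i+z_n})(1-q^{z_i-z_n-1})}{(1+q^{z_i-z_n})(1-q^{z_i+z_n-1})}
\]
has simple poles precisely on $\{z_i+z_n = 1+\tfrac{2\pi\sqrt{-1}}{\log q}\Z\}$ for $i=1,\ldots,n-1$, $G_1(z)\omega(x;z)$ must vanish on each of these. The $S_n$-invariance of $G_1(z)\omega(x;z)$ (Theorem~\ref{th: feq Sn}) propagates these zeros over the full $S_n$-orbit: the transposition $(j,n)\in S_n$ sends the locus $z_i+z_n = 1$ to $z_i+z_j = 1$, and the orbit exhausts $1\leq i<j\leq n$. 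Multiplying, $G(z)\omega(x;z)$ is holomorphic on $\C^n$; as a $W$-invariant rational function of $q^{z_1},\ldots,q^{z_n}$ with no poles, it belongs to $\C[q^{\pm z_1},\ldots,q^{\pm z_n}]^W$.

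\textbf{Main obstacle.} The delicate point is the zero-counting in the holomorphy argument: one must confirm that the poles of $G_1(\tau(z))/G_1(z)$ are genuinely simple (and not produced by spurious numerator vanishings inside $G_1(z)\omega(x;z)$), and that the zeros of $G_1(z)\omega(x;z)$ obtained from $\tau$- and $S_n$-propagation have the correct multiplicities along intersections $\{z_i+z_j = 1\}\cap\{z_{i'}+z_{j'} = 1\}$ to cancel the full pole order of $G(z)/G_1(z)$ there. This rests on the rational structure of $\omega(x;z)$ revealed by the functional equations and the hermitian integral representation.
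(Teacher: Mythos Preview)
Your $W$-invariance argument is essentially the paper's: both verify the cocycle $G(\sigma(z))=\Gamma_\sigma(z)G(z)$ against Theorem~\ref{th: feq}.

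For holomorphy you take a genuinely different route. The paper never tries to locate zeros of $G_1(z)\omega(x;z)$. Instead it passes to $\Phi(z;\vphi)=\int_{X^{op}}\vphi(x)\,\abs{\bfd(x)}^{s+\ve}dx$ for compactly supported $\vphi\in\CKX$, shows $G(z)\Phi(z;\vphi)$ is holomorphic on the original convergence region $\calD_0$ (trivially) and on $\calD_1=\{z:\real(z_i+z_j)\neq 1\}$ (via the hermitian integral representation, since $G_2=G/G_1$ is holomorphic there), spreads both regions by the $W$-action, and concludes by observing that the convex hull of the connected set $\bigcup_{\sigma\in W}\sigma(\calD_0\cup\calD_1)$ is all of $\C^n$. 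Your zero-propagation argument---force vanishing of $G_1\omega$ on $\{z_i+z_n=1\}$ from $\tau$-invariance, then transport by $S_n$---is more direct and bypasses the tube-domain convexity step entirely; the paper's approach, on the other hand, never needs to match pole orders against zero orders. One small inaccuracy: $G_1(\tau(z))/G_1(z)$ also has poles along $\{q^{z_i-z_n}=-1\}$, not only along $\{z_i+z_n=1\}$, but this does not affect your argument since you only need the latter poles to exist. The ``main obstacle'' you flag about multiplicities at intersections is in fact harmless: once $G_1(z)\omega(x;z)$ is entire and rational in $q^{z_i}$, it is a Laurent polynomial, and vanishing on each irreducible hypersurface $\{q^{z_i+z_j}=q\}$ forces divisibility by the coprime factors $1-q^{z_i+z_j-1}$ in $\C[q^{\pm z_1},\ldots,q^{\pm z_n}]$; equivalently, Hartogs extends across the codimension-$2$ intersections. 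Both approaches share the step of deducing holomorphy of $G_1\omega$ (resp.\ $G_1\Phi$) from that of $G_1\zeta_*^{(h)}$ by integrating over a compact set, and both gloss over the same uniformity issue there.
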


\bigskip
\noindent
\begin{proof}[An outline of a proof] \quad
It is clear that $G(z) \cdot \omega(x; z)$ is invariant under the action of $\sigma \in \set{(i\; i+1) \in S_n}{1 \leq i \leq n-1} \cup \{ \tau \}$ by Theorem~\ref{th: feq}, hence it is invariant for every $\sigma \in W$ by cocycle relations of Gamma factors. 

In order to prove the holomorphy, we consider the following integral for any compactly supported function $\vphi$ in $\CKX$:
\begin{eqnarray*}
\Phi(z; \vphi) =  \int_{X^{op}}\, \vphi(x) \abs{\bfd(x)}^{s+\ve} dx, 
\end{eqnarray*}
where $dx$ is a $G$-invariant measure on $X$ and the relation of $z$ and $s$ and $\ve$ are the same as before. Then, the right hand side is absolutely convergent if $\real(s_i) \geq 1, (1 \leq i \leq n-1)$ and $\real(s_n) \geq \frac12$, and continued to a rational function in $q^{s_1}, \ldots, q^{s_n}$. 
Taking $\vphi$ to be the characteristic function of $K\cdot x$, we see 
$\Phi(z; \vphi) = vol(K\cdot x) \cdot \omega(x; z)$, hence it is enough to show the holomorphy of $G(z) \cdot \Phi(z; \vphi)$. 
To begin with, $\Phi(z; \vphi)$ is holomorphic on    
\begin{eqnarray*}
\calD_0 &=& 
\set{z\in \C^n}{-\frac12 \geq \real(z_n), \; \real(z_{i+1}) \geq \real(z_i)+1, \; (1 \leq i \leq n-1)},
\end{eqnarray*}
and satisfies the same functional equations
\begin{eqnarray*}
\Phi(z; \vphi) = \Gamma_\sigma(z) \Phi(\sigma(z); \vphi), \quad \sigma \in W.
\end{eqnarray*}
Since $G(z)$ is holomorphic on $\calD_0$, $G(z) \cdot \Phi(z; \vphi)$ is holomorphic on  
\begin{eqnarray*}
\bigcup_{\sigma \in W}\, \sigma(\calD_0).
\end{eqnarray*}
We recall $G_1(z)$ in Theorem~\ref{th: feq Sn} and write  
$$
G(z) = G_1(z) \times G_2(z),\quad G_2(z) = \prod_{1 \leq i < j \leq n}\, \frac{1+q^{z_i+z_j}}{1 - q^{z_i+z_j-1}}.
$$
We obtain, in a similar way to the proof of Theorem~\ref{th: feq Sn},
$$
\Phi(z; \vphi) = \int_{X^{op}}\, \vphi(x) \zeta^{(h)}_*(D(x); z) dx,
$$
then we see 
$G(z) \cdot \Phi(z; \vphi)$ is holomorphic on
$$
\calD_1 = \set{z \in \C^n}{\real(z_i+z_j) \ne 1, \; (1 \leq i < j \leq n)},
$$
since $G_2(z)$ is holomorphic on $\calD_1$ and $\vphi$ is compactly supported.
We see $G(z) \cdot \Phi(z; \vphi)$ is holomorphic on $\C^n$, since it is holomorphic on  
$$
\wt{\calD} = \bigcup_{\sigma \in W} \sigma(\calD_0 \cup \calD_1),
$$
and the convex hull of the connected set $\wt{\calD}$ is $\C^n$. 
\end{proof}

\vspace{2cm}

\section{The explicit formula for $\omega(x;z)$}
\label{sec:3}

{\bf 3.1.} 
We give the explicit formula of $\omega(x; z)$. Since $\omega(x; z)$ is stable on each $K$-orbit, it is enough to show the explicit formula for each $x_\lam, \; \lam \in \Lam_n^+$ by Theorem~\ref{thm: Cartan}.

\begin{thm} \label{th: explicit}
For $\lam \in \Lam_n^+$, one has the explicit formula:
\begin{eqnarray*} 
\omega(x_\lam; z)
 &=&
\frac{(1-q^{-2})^n}{w_{2n}(-q^{-1})} \cdot \frac{1}{G(z)} \cdot c_\lam \cdot Q_\lam(z),
\end{eqnarray*}
where $G(z)$ is given in Theorem~\ref{th: W-inv}, and 
\begin{eqnarray}
&&
w_m(t) = \prod_{i=1}^m (1 - t^i),\qquad
c_\lam = (-1)^{\sum_i \lam_i (n-i+1)} q^{-\sum_i \lam_i (n-i + \frac12)},\nonumber \\
&&
Q_\lam(z) = 
\sum_{\sigma \in W}\, \sigma\left(  q^{-\pair{\lam}{z}} c(z) \right), \nonumber\\
&& \label{c(z)}
c(z) = \prod_{\alp \in \Sigma_s^+}\, \frac{1 + q^{\pair{\alp}{z}-1}}{1 - q^{\pair{\alp}{z}}}
\prod_{\alp \in \Sigma_\ell^+}\, \frac{1 - q^{\pair{\alp}{z}-1}}{1 - q^{\pair{\alp}{z}}}. 
\end{eqnarray}
\end{thm}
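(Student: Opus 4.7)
The plan is to apply the general framework of \cite[Theorem~2.6]{French}: once the functional equations of $\omega(x;z)$ under $W$ and the $W$-invariance of $G(z)\omega(x;z)$ are in hand (Theorems~\ref{th: feq} and~\ref{th: W-inv}), the explicit formula is determined by a single leading-term computation in one Weyl chamber. Writing $F_\lambda(z) = G(z)\,\omega(x_\lambda; z) \in \calR$, the task is to prove $F_\lambda(z) = A\,c_\lambda\,Q_\lambda(z)$ for a scalar $A$ independent of $\lambda$, and then to pin down $A$.

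The core is asymptotic analysis in the antidominant chamber, where $\real(z_{i+1}) - \real(z_i) \gg 1$ and $\real(z_n) \ll 0$ (inside the domain of absolute convergence of the defining integral). In this region $q^{\pair{\alpha}{z}}$ is exponentially small for every $\alpha \in \Sigma^+$, so $c(z) \to 1$ and $G(z) \to 1$; on the candidate right-hand side only the $\sigma = e$ term of $Q_\lambda(z)$ survives at leading order, giving $A\,c_\lambda\,q^{-\pair{\lambda}{z}}$. On the integral side, a direct computation yields $|\mathbf{d}(x_\lambda)|^{s+\ve} = c_\lambda\, q^{-\pair{\lambda}{z}}$ (using $d_i(x_\lambda)=\pi^{-(\lambda_1+\cdots+\lambda_i)}$ and the explicit form of $\ve$ from~\eqref{def of ve}), so the leading-order identity reduces to determining the volume of the open cell of $K$ on which $d_i(k\cdot x_\lambda) = d_i(x_\lambda)$ up to units.

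To capture not only the leading order but also the finer structure encoded by $c(z)$, I would stratify $K$ via an Iwasawa-type decomposition adapted to $B$ and integrate root-by-root. The rank-one integrals along each short root follow the pattern of Proposition~\ref{prop: size 1} and produce the factor $(1+q^{\pair{\alpha}{z}-1})/(1-q^{\pair{\alpha}{z}})$; those along each long root reduce to hermitian rank-one integrals (in the spirit of the embedding $K_0 \hookrightarrow K$ used in Theorem~\ref{th: feq Sn}) and produce $(1-q^{\pair{\alpha}{z}-1})/(1-q^{\pair{\alpha}{z}})$. The plus sign on short roots is traceable to the $1+q^{-1}$ normalizations inherent to the unramified unitary rank-one setting. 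Having matched $F_\lambda(z) = A\,c_\lambda\,q^{-\pair{\lambda}{z}}c(z)$ to highest order in the antidominant chamber, the $W$-invariance of $F_\lambda$ together with linear independence of the $W$-translates of $q^{-\pair{\lambda}{z}}$ forces $F_\lambda(z) = A\,c_\lambda\,Q_\lambda(z)$ on all of $\C^n$; the scalar $A$ emerges from the volume computation itself.

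The main obstacle is the asymptotic/stratification step. It requires an Iwasawa-Bruhat-type cell decomposition of $K$ adapted to $B$ and the hermitian/unitary structure, careful local volume bookkeeping (tracking $1-q^{-1}$, $1+q^{-1}$, and $1-q^{-2}$ factors arising from the unramified extension $k'/k$), and matching the resulting product of local factors to $c(z)$ term by term. Proposition~\ref{prop: size 1} already illustrates the relevant rank-one unitary computation, so the general case should proceed by iterating along a reduced expression for the longest element of $W$, though the combinatorial bookkeeping needed to extract the global normalizing constant $A = (1-q^{-2})^n/w_{2n}(-q^{-1})$ is nontrivial.
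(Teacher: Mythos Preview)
Your proposal is coherent in spirit but differs substantially from the paper's proof, and it has a real gap.

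The paper does not argue by asymptotic analysis in a Weyl chamber or by root-by-root Gindikin--Karpelevich integrals over $K$. Instead it applies \cite[Theorem~2.6]{French} in the form that theorem actually takes: one introduces the \emph{finer} spherical functions $\omega_u(x;s)$, indexed by the $2^n$ open $B$-orbits $X_u\subset X^{op}$ (with $u\in\calU=(\Z/2\Z)^n$), and passes to the Iwahori subgroup $U\subset K$. The key analytic input is the one-line computation $\abs{d_i(\nu\cdot x_\lam)}=\abs{d_i(x_\lam)}$ for all $\nu\in U$, giving $\delta_u(x_\lam,z)=c_\lam q^{-\pair{\lam}{z}}$ or $0$. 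The cited theorem then outputs the vector $(\omega_u(x_\lam;z))_u$ as a $W$-sum involving $\gamma(\sigma(z))$, $\Gamma_\sigma(z)$ and $\delta_u(x_\lam,\sigma(z))$; summing over $u$ (the $\chi={\bf 1}$ component) and using the algebraic identities $\Gamma_\sigma(z)=G(\sigma(z))/G(z)$ and $\gamma(z)G(z)=c(z)$ yields the formula. The normalizing constant is not obtained by volume bookkeeping on $K$ but from the index sum $Q=\sum_{\sigma\in W}[U\sigma U:U]^{-1}=w_{2n}(-q^{-1})/(1-q^{-2})^n$. A substantial part of the proof (the whole of \S3.2) is devoted to verifying the hypothesis~(A3) of \cite{French}---that on each closed $B$-orbit some $\psi_i$ is nontrivial on the identity component of the $B$-stabilizer---by an explicit case analysis of $B$-orbits in $X\setminus X^{op}$; you do not mention this, and without it the cited machinery does not apply.

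The gap in your own route is the uniqueness step. You write that ``$W$-invariance of $F_\lam$ together with linear independence of the $W$-translates of $q^{-\pair{\lam}{z}}$ forces $F_\lam(z)=A\,c_\lam\,Q_\lam(z)$.'' This is not a valid inference: a $W$-invariant Laurent polynomial with leading monomial $q^{-\pair{\lam}{z}}$ is determined only up to an element of $\calR$ supported on strictly smaller weights, and in the present situation the space of spherical functions attached to a generic $z$ has dimension~$2^n$, not~$1$. This is exactly why the paper works with the full $\calU$-vector of finer spherical functions and their matrix functional equations $A^{-1}G(\sigma,z)\,\sigma A$ rather than with $\omega(x;z)$ alone. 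Your asymptotic match in one chamber, even granting the heuristic root-by-root integration, does not by itself pin down the lower-order terms; some substitute for the $\omega_u$/Iwahori mechanism (or an independent uniqueness theorem in this multiplicity-$2^n$ setting) would be needed.
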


\begin{rem} \label{rem Mac}
The above formula is the same as the explicit formula of $\omega_T(y_\lam; z)$ on $X_T$ at $y_\lam \in X_T$ parametrized by $\lam \in \Lam_n^+$ in \cite[Theorem~3.3]{Oda}. We explain the relation between the spaces $X$ and $X_T$'s in Appendix \ref{sec:appC}. 

We see that the main part $Q_\lam(z)$ of $\omega(x_\lam; z)$ belongs to $\calR = \C[q^{\pm z_1}, \ldots, q^{\pm z_n}]^W$ by Theorem~\ref{th: W-inv}. 
On the other hand 
$Q_\lam(z)$ is a Hall-Littlewood polynomial $P_\lam$ of type $C_n$ up to constant multiple, which is introduced in a general context of orthogonal polynomials associated with root systems (\cite[\S 10]{Mac}), and $Q_{\bf0}(z)$ is a specialization of Poincar{\'e} polynomial (\cite[Th.2.8]{Mac2}). More precisely, 
\begin{eqnarray} \label{HL-Q}
&&
Q_\lam(z) = \frac{\wt{w_\lam}(-q^{-1})}{(1+q^{-1})^n} \cdot P_\lam(z), \\
&& 
\wt{w_{\lam}}(t) = w_{m_0(\lam)}(t)^2 \cdot  \prod_{\ell \geq 1}\, w_{m_\ell(\lam)}(t), \quad 
m_\ell(\lam) = \sharp\set{i}{\lam_i = \ell}, \nonumber
\end{eqnarray}
and it is known that the set $\set{Q_\lam(z)}{\lam \in \Lam_n^+}$
forms a $\C$-basis for $\calR$, and in particular, $Q_{\bf0}(z)$ is a
constant independent of $z$.  For details see Appendix
\ref{sec:appB}.
\end{rem}

\bigskip
By Theorem~\ref{th: explicit} and Remark~\ref{rem Mac}, we have the following corollary. 

\begin{cor} \label{cor}
For $x_{\bf0} = 1_{2n}$, one has
$$
\omega(1_{2n}; z) = 
\frac{(1-q^{-1})^n w_n(-q^{-1})^2}{w_{2n}(-q^{-1})} \times
\frac{1}{G(z)}.
$$
\end{cor}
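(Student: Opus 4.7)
The plan is to specialize Theorem~\ref{th: explicit} to $\lam = \mathbf{0}$ and evaluate all three factors ($c_\lam$, $Q_\lam(z)$, and the constant prefactor) at this point. Since $x_{\mathbf{0}} = 1_{2n}$ this gives exactly the desired formula, provided one has the evaluation $Q_{\mathbf{0}}(z)$, which is the only nontrivial input.

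First I would note that at $\lam = \mathbf{0}$ the sums $\sum_i \lam_i(n-i+1)$ and $\sum_i \lam_i(n-i+\tfrac12)$ vanish, so $c_{\mathbf{0}} = 1$. Next, I would read off $Q_{\mathbf{0}}(z)$ from the normalization (\ref{HL-Q}) in Remark~\ref{rem Mac}: the multiplicities for the zero partition are $m_0(\mathbf{0}) = n$ and $m_\ell(\mathbf{0}) = 0$ for $\ell \geq 1$, so $\wt{w_{\mathbf{0}}}(-q^{-1}) = w_n(-q^{-1})^2$. Combined with $P_{\mathbf{0}}(z) = 1$ (the standard normalization of Hall--Littlewood polynomials at the zero partition, as recalled in Appendix~\ref{sec:appB}), this yields
\begin{equation*}
Q_{\mathbf{0}}(z) = \frac{w_n(-q^{-1})^2}{(1+q^{-1})^n}.
\end{equation*}

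Substituting into Theorem~\ref{th: explicit} and using $(1-q^{-2})^n = (1-q^{-1})^n(1+q^{-1})^n$, the factor $(1+q^{-1})^n$ cancels and one obtains
\begin{equation*}
\omega(1_{2n}; z) = \frac{(1-q^{-1})^n(1+q^{-1})^n}{w_{2n}(-q^{-1})} \cdot \frac{1}{G(z)} \cdot \frac{w_n(-q^{-1})^2}{(1+q^{-1})^n} = \frac{(1-q^{-1})^n\, w_n(-q^{-1})^2}{w_{2n}(-q^{-1})} \cdot \frac{1}{G(z)},
\end{equation*}
which is the claim. There is essentially no obstacle here: the entire content lies in Theorem~\ref{th: explicit} and the constant-term identity for Hall--Littlewood polynomials of type $C_n$ (the specialization of the Poincar\'e polynomial), both of which are available by the references. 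One could alternatively verify the value of $Q_{\mathbf{0}}(z)$ directly from its definition $\sum_{\sigma\in W}\sigma(c(z))$ as an instance of Macdonald's constant-term identity, but invoking (\ref{HL-Q}) is the cleanest route.
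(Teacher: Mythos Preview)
Your argument is correct, but it takes a different route from the paper's. The paper does not invoke \eqref{HL-Q} or the normalization $P_{\mathbf{0}}=1$; instead it observes that by the very definition \eqref{def-sph} one has $\omega(x;-\ve)=1$ for every $x$ (the exponent $s+\ve$ vanishes), and then evaluates Theorem~\ref{th: explicit} at the corresponding point $z=z^*$. Since $Q_{\mathbf{0}}(z)$ is constant (Remark~\ref{rem Mac}), this pins down $Q_{\mathbf{0}}(z)=\bigl(\tfrac{(1-q^{-2})^n}{w_{2n}(-q^{-1})}\cdot G(z^*)^{-1}\bigr)^{-1}$, after which one still has to compute $G(z^*)$. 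Your approach sidesteps that last computation entirely by reading off $Q_{\mathbf{0}}$ from the Hall--Littlewood side via \eqref{HL-Q} and the triangularity $P_{\mathbf{0}}=m_{\mathbf{0}}=1$ (Proposition~\ref{prop:Pbasis}); this is cleaner, at the cost of leaning more heavily on Appendix~\ref{sec:appB}. The paper's method, conversely, yields the Poincar\'e-type evaluation of $Q_{\mathbf{0}}$ as a by-product of the spherical function itself rather than as an input.
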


\begin{proof} 
By definition, we see that $\omega(x; -\ve) = 1$ with $s$-variable $-\ve$. We denote by $z^*$ the value in $z$-variable corresponding to $-\ve$. Since $Q_{\bf0}(z)$ is independent of $z$, we have
\begin{eqnarray*}
Q_{\bf0}(z) =  Q_{\bf0}(z^*) = \left\{ \frac{(1-q^{-2})^n}{w_{2n}(-q^{-1})} \cdot \frac{1}{G(z^*)}  \right\}^{-1} =
\frac{w_n(-q^{-1})^2}{(1+q^{-1})^n},
\end{eqnarray*}
and the result follows from this.
\end{proof}

\bigskip

We will prove Theorem \ref{th: explicit}
 by using a general expression formula given in \cite{French} (or in \cite{JMSJ}) of spherical functions on homogeneous spaces, which is based on functional equations of finer spherical functions and some data depending only on the group $G$. 
We need to check the assumptions there. Let $\G$ be a connected reductive linear algebraic group and $\X$ be a $\G$-homogeneous affine algebraic variety, where everything is assumed to be defined over a $p$-adic field $k$. For an algebraic set, we use the same ordinary letter to indicate the set of $k$-rational points. Let $K$ be a special good maximal compact open subgroup of $G$, and $\B$ a minimal parabolic subgroup of $\G$ defined over $k$ satisfying $G = KB = BK$. 
We denote by $\frX(\B)$ the group of rational characters of $\B$ defined over $k$ and by $\frX_0(\B)$ the subgroup consisting of those characters associated with some relative $\B$-invariant on $\X$ defined over $k$. In this situation, the assumptions are the following:

\mslit
$(A1)$ $\X$ has only a finite number of $\B$-orbits (, hence there is only one open orbit $\X^{op}$).

\mslit
$(A2)$ A basic set of relative $\B$-invariants on $\X$ defined over $k$ can be taken by regular functions on $\X$.

\mslit
$(A3)$ For $y \in \X \backslash \X^{op}$, there exists some $\psi$ in  $\frX_0(\B)$ whose restriction to the identity component of the stabilizer $\B_y$ of $\B$ at $y$ is not trivial.

\mslit
$(A4)$ The rank of $\frX_0(\B)$ coincides with that of $\frX(\B)$.

\slit
In the present situation, our space $X$ is isomorphic to $U(j_{2n})/U(j_{2n})\cap U(1_{2n})$ over $\ol{k}$ 
(cf.~Appendix~\ref{sec:appA} and (\ref{space X})), 
which is a symmetric space and $(A1)$ is satisfied. $(A2)$ and $(A4)$ are satisfied by our relative $B$-invariants $\set{d_i(x)}{1 \leq i \leq n}$, where $n$ is the rank of $\frX_0(\B) = \frX_0(\B)$ and 
$\X^{op} = \set{x \in \X}{d_i(x) \ne 0, \; 1 \leq i \leq n}$.    To check $(A3)$ is crucial and rather complicated. 

\mslit
  We admit the condition $(A3)$ for a while, which is proved in \S 3.2, and prove Theorem~\ref{th: explicit}. 
The set $X^{op} = \set{x \in X}{d_i(x) \ne 0, \; 1 \leq i \leq n}$ is decomposed into the disjoint union of $B$-orbits as follows:
\begin{eqnarray*} \label{factor in frX-op}
&&
X^{op} = \dsqcup{u \in \calU}\, X_u, \qquad \calU = \left( \Z/2\Z \right)^n, \\
&&
X_u = \set{x \in X^{op}}{v_\pi(d_i(x)) \equiv u_1 + \cdots + u_i\pmod{2}, \; 1 \leq i \leq n}.\nonumber
\end{eqnarray*}
According to the decomposition of $X^{op}$, we consider finer spherical functions 
$$
\omega_u({x}; s) = \dint{K}\, \abs{\bfd(k\cdot x)}_u^{s+{\ve}}dk, \quad
\abs{\bfd(y)}_u^s = \left\{ \begin{array}{ll} 
\prod_{i=1}^n \abs{d_i(y)}^{s_i} & \mbox{if } {y} \in X_u,\\
{} & {}\\
0 & \mbox{otherwise .}
\end{array}
\right.
$$
Then, for ``generic $z$'', the set $\set{\omega_u(x, z)}{u \in \calU}$ becomes a basis for the space of spherical functions on $X$ associated with the same $\lam_z$, where we keep the relation (\ref{change of var}) between $s$ and $z$. 
Here ``generic $z$'' means that $f(q^{z_1}, \ldots, q^{z_n}) \ne 0$ for a polynomial $f(x_1, \dots, x_n)$, which comes from the bijectivity of Poisson integral (cf.~\cite[Theorem~3.2]{Kato}) and the condition $(A3)$ (cf.~\cite[Lemma~2.4]{French}).
For each character $\chi$ of $\calU$, we may represent as follows
\begin{eqnarray} \label{omega(z_chi)}
\sum_{u \in \calU}\, \chi(u) \omega_u(x;z) = \omega(x; z_\chi),
\end{eqnarray}
where $z_\chi$ is obtained by adding $\dfrac{\pi\sqrt{-1}}{\log q}$ to $z_i$ for suitable $i$ according to $\chi$, and they are linearly independent (for generic $z$) as varying characters $\chi$. 
By the functional equation of $\omega(x;z)$ (Theorem~\ref{th: feq}), 
we have for each $\sigma \in W$
\begin{eqnarray} 
\omega(x; z_\chi) &=& 
\Gamma_\sigma(z_\chi) \omega(x; \sigma(z_\chi))\nonumber \\
&=&
\Gamma_\sigma(z_\chi) \omega(x; \sigma(z)_{\sigma(\chi)}), \label{feq for omega-chi}
\end{eqnarray}
by taking a suitable character $\sigma(\chi)$ of $\calU$. 
When $\chi$ is the trivial character ${\bf 1}$, the equation (\ref{feq for omega-chi}) coincides with the original functional equation of $\omega(x; z)$ and $\Gamma_\sigma(z_{\bf 1}) = \Gamma_\sigma(z)$.
By (\ref{omega(z_chi)}) and (\ref{feq for omega-chi}), we obtain vector-wise functional equations for finer spherical functions $\omega_u(x;z)$ 
\begin{equation} \label{matrix-feq}
\left( \omega_u(x; z) \right)_{u \in \calU} = A^{-1} \cdot G(\sigma,z)\cdot \sigma A \cdot \left( \omega_u(x; \sigma(z)) \right)_{u \in \calU}, \qquad \sigma \in W,
\end{equation}
where
\begin{eqnarray*}
A = (\chi(u))_{\chi, u}, \quad
\sigma A = (\sigma(\chi)(u))_{\chi, u} \in GL_{2^n}(\Z),
\end{eqnarray*}
$\chi$ runs over characters of $\calU$, $u \in \calU$, and $G(\sigma, z)$ is the diagonal matrix of size $2^n$ whose $(\chi, \chi)$-component is $\Gamma_\sigma(z_\chi)$. 
We denote by $U$ the Iwahori subgroup of $K$ compatible with $B$ and take the normalized Haar measure $du$ on $U$;
$$
U = \set{\nu=(u_{ij}) \in K}{\begin{array}{l}
 u_{ii} \in \calO_{k'}^\times \; \mbox{for } 1 \leq  i \leq 2n, \\
 u_{ij} \in \pi\calO_{k'} \; \mbox{if } i > j
 \end{array} }.  
$$ 
Then it is easy to see,  for any $\nu \in U$ and $x_\lam$ with $\lam \in \Lam_n^+$
\begin{eqnarray*}
\abs{d_i(\nu\cdot x_\lam)} = \abs{d_i(Diag(\pi^{-\lam_n}, \ldots, \pi^{-\lam_1}))} = q^{(\lam_1 + \cdots + \lam_i)},
\end{eqnarray*}
which means $x_\lam \in \calR^+$ in the sense of \cite[(2.8)]{French}.
We set
\begin{eqnarray*} 
\delta_{u}(x_\lam, z) &=& \dint{U}\, \abs{\bfd(\nu \cdot x_\lam)}_u^{s+{\ve}} d\nu.
\end{eqnarray*}
Then we have 
\begin{equation*}
\delta_u(x_\lam,z) =  \left\{ \begin{array}{ll} 
\abs{\bfd(x_\lam)}^{s+{\ve}} & \mbox{if } x_\lam  \in X_u\\
{} & {}\\
0 & \mbox{otherwise}.
\end{array}
\right\} 
=
\label{delta-u}
 \left\{ \begin{array}{ll} 
c_\lam q^{-\pair{\lam}{z}} & 
\mbox{if } x_\lam  \in X_u\\
{} & {}\\
0 & \mbox{otherwise}.
\end{array}
\right.
\end{equation*}
Applying \cite[Theorem~2.6]{French} to our present case, we obtain for generic $z$, by virtue of (\ref{matrix-feq}),
\begin{equation} \label{gen-formula}
\left( \omega_u(x_\lam; z) \right)_{u \in \calU} = 
\frac{1}{Q} \sum_{\sigma \in W}\, \gamma(\sigma(z)) \left( A^{-1} \cdot G(\sigma, z) \cdot \sigma A \right) \left( \delta_u(x_\lam, \sigma(z)) \right)_{u \in \calU},
\end{equation}
where
\begin{eqnarray}
&&
Q = \sum_{\sigma \in W}\, [U\sigma U : U]^{-1} = \frac{w_{2n}(-q^{-1})}{(1-q^{-2})^n}, \nonumber \\
&&  \label{gamma(z)}
\gamma(z) =  \prod_{\alp \in \Sigma_s^+}\, \frac{1 - q^{2\pair{\alp}{z}-2}}{1 - q^{2\pair{\alp}{z}}}
\cdot
\prod_{\alp \in \Sigma^+_\ell}\, \frac{ 1 - q^{\pair{\alp}{z}-1} }{ 1 - q^{\pair{\alp}{z}}}. 
\end{eqnarray}
Then 
we obtain
\begin{eqnarray*}
\omega({x_\lam}; z) &=& \sum_{u \in \calU}\, {\bf 1}(u) \omega_u(x_\lam; z)\\
&=&
\mbox{{\rm the $(\chi={\bf 1})$-entry of }} A \left( \omega_u(x_\lam; z) \right)_{u \in \calU}\\
&=&
\mbox{{\rm the $(\chi={\bf 1})$-entry of }} \frac{1}{Q} \sum_{\sigma \in W}\, \gamma(\sigma(z)) \left( G(\sigma, z) \cdot \sigma A \right) \left( \delta_u(x_\lam, \sigma(z)) \right)_{u \in \calU}\\
&=&
\frac{(1-q^{-2})^n }{w_{2n}(-q^{-1})} \times \sum_{\sigma \in W}\, \gamma(\sigma(z)) \Gamma_\sigma(z) \sum_{u}\delta_u(x_\lam, \sigma(z))\\
&=&
\frac{c_\lam (1-q^{-2})^n }{w_{2n}(-q^{-1})} \times \sum_{\sigma \in W}\, {\gamma(\sigma(z))} \Gamma_\sigma(z)  q^{-\pair{\lam}{\sigma(z)}}.
\end{eqnarray*}
By Theorem~\ref{th: feq}, Theorem~\ref{th: W-inv}, (\ref{c(z)}) and (\ref{gamma(z)}),  we have
\begin{eqnarray*}
\Gamma_\sigma(z) = \frac{G(\sigma(z))}{G(z)}, \qquad \gamma(z) \cdot G(z) = c(z).
\end{eqnarray*}
Thus we obtain the required explicit formula of $\omega(x_\lam; z)$ for generic $z$, and it is also valid for any $z\in \C^n$, since $G(z)\cdot \omega(x_\lam;z)$ is a polynomial in $q^{\pm z_1}, \ldots, q^{\pm z_n}$. 
\hfill\qedsymbol

\bigskip
\noindent
{\bf 3.2.} 
In this subsection, we prove the space $X_n$ satisfies the condition $(A3)$ by induction on $n$. 
For $n = 1$, the condition $(A3)$ is obvious, since $X_1 = X_1^{op}$ (cf.~Proposition~\ref{Cartan n=1}).
Hereafter we assume $n \geq 2$. 
We set
\newcommand{\bfb}{{\bf b}}
\begin{eqnarray*}
&&
t(\bfb) = Diag(b_1, \ldots, b_n, b_n^{-1}, \ldots, b_1^{-1}) \in B (= B_n), \qquad \bfb = (b_1, \ldots, b_n) \in (k^{\times})^n,\\
&&
B_0 = \set{\twomatrix{j_nb^{*-1}j_n}{0}{0}{b} \in B}{b \in GL_n(k'), \; \mbox{upper triangular}}, \\
&&
N_B = \set{\twomatrix{1_n}{A}{0}{1_n} \in B}{jA + A^*j = 0}.
\end{eqnarray*}


\medskip
\begin{lem} \label{lem: d1 ne 0}
Assume $x \in X_n$ and $d_1(x) \ne 0$.
Then, the orbit $B \cdot x$ contains an element of type 
$$
\left( \begin{array}{c|c|c}
a^{-1} & 0 & 0\\
\hline
0 & y & 0\\
\hline
0 & 0 & a
\end{array} \right), \qquad  a = 1, \pi, \quad y \in X_{n-1}.
$$
Here, if $x \notin X_n^{op}$, then $y \notin X_{n-1}^{op}$. 
\end{lem}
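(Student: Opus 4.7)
The plan is to produce the block-diagonal representative by running the decomposition $B = B_0\cdot N_B$ on $x$. Write $x$ in the $n+n$ block form
\[
x = \begin{pmatrix} P & Q\\ Q^* & R\end{pmatrix},\qquad c := R_{nn} = x_{2n,2n} = d_1(x)\in k^\times.
\]
First I would use the diagonal element of $B_0$ whose lower-right block is $b = \mathrm{Diag}(1,\dots,1,\bar{b}_1^{-1})$ with $b_1 \in (k')^\times$; this scales $R_{nn}$ by $N(b_1)^{-1}$. Since $k'/k$ is unramified, $N((k')^\times) = \calO_k^\times\cdot\pi^{2\Z}$, so according to the parity of $v_\pi(c)$ I can choose $b_1$ so that the new $(2n,2n)$-entry is $a\in\{1,\pi\}$.

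Next I would perform the unipotent cleanup. Using the $B_0$-element whose lower-right block is $q = 1 + \sum_{i<n} r_i E_{in}\in GL_n$ (upper triangular unipotent), the lower-right block of $x$ transforms by $R\mapsto qRq^*$; the choice $r_i = -R_{in}/R_{nn}$ kills the off-$(n,n)$ entries of the last row and column of $R$ while preserving $R_{nn}=a$. After this, the last column of $R$ is $(0,\dots,0,a)^T$, so applying an element $n(A) = \begin{pmatrix} 1 & A\\ 0 & 1\end{pmatrix}\in N_B$ (so $jA + A^*j = 0$) transforms $Q\mapsto Q+AR$ with $(AR)_{in} = a\cdot A_{in}$; setting $A_{in} = -Q_{in}/a$ for $i=1,\dots,n$ kills the last column of $Q$, and hermiticity kills the last row outside $(2n,2n)$.

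The main obstacle is the compatibility of this $N_B$-step with the constraint $A^* = -jAj$, which forces the anti-diagonal entry $A_{1,n}$ to lie in $k\sqrt{\eps}$ (the non-anti-diagonal $A_{i,n}$ for $i\ge 2$ being free, with partners $A_{1,n+1-i} = -\bar{A}_{i,n}$ automatically determined). So I must verify $Q_{1,n}/a\in k\sqrt{\eps}$. Here the unitary-hermitian identity rescues the argument: after the $B_0$-cleanup the entries $x_{2n,n+1},\dots,x_{2n,2n-1}$ vanish, so expanding $(xj_{2n}x)_{2n,2n}=0$ via hermiticity as $\sum_k x_{2n,k}\overline{x_{2n,2n+1-k}} = 0$ collapses to $2a\cdot\real(x_{2n,1}) = 0$, giving $x_{2n,1}\in k\sqrt{\eps}$ and hence $Q_{1,n} = \overline{x_{2n,1}}\in k\sqrt{\eps}$.

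Once the last row and column of $x$ have been brought to $(0,\dots,0,a)$, writing $x = \begin{pmatrix} X' & 0\\ 0 & a\end{pmatrix}$ with $X'\in M_{2n-1}(k')$ hermitian and substituting into $xj_{2n}x = j_{2n}$ (with $j_{2n}$ decomposed in $(2n-1)+1$ block form) forces $X'e_1 = a^{-1}e_1$, hence $X' = \begin{pmatrix} a^{-1} & 0\\ 0 & y\end{pmatrix}$ for some hermitian $y$; the residual block identity reads $yj_{2n-2}y = j_{2n-2}$, and $\Phi_{xj_{2n}}(t) = (t^2-1)\Phi_{yj_{2n-2}}(t)$ gives $y\in X_{n-1}$. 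The supplementary claim follows because $d_i(x) = a\cdot d_{i-1}(y)$ for $i\ge 2$ on this representative and the $d_i$ are relative $B$-invariants, so $x\notin X_n^{op}$ forces some $d_{i-1}(y)=0$, i.e.\ $y\notin X_{n-1}^{op}$.
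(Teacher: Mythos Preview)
Your proof is correct and follows essentially the same route as the paper: use $B_0$ to normalize the lower-right block so that the last row/column of $R$ is $(0,\ldots,0,a)$ with $a\in\{1,\pi\}$, use $N_B$ to clear the remaining entries of the $2n$-th row/column, and then let the unitary-hermitian constraint $xj_{2n}x=j_{2n}$ force the first row/column into $(a^{-1},0,\ldots,0)$. The only cosmetic difference is the order in which you handle the corner entry $x_{2n,1}$: the paper first uses $N_B$ to make $x_{2n,1}\in k$ and then lets the relation $j_{2n}[x]=j_{2n}$ kill it, whereas you first read off $\real(x_{2n,1})=0$ from $(xj_{2n}x)_{2n,2n}=0$ and then kill the surviving $k\sqrt{\eps}$-part with $N_B$; both are the same computation seen from opposite ends.
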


\begin{proof}
By the action of $B_0$, we may assume $x = (x_{i,j})$ satisfies
$$
x_{2n,2n} = 1, \pi, \quad x_{2n,j} = x_{j,2n} = 0, \; (n+1 \leq j \leq 2n-1).
$$
Then, by the action of $N_B$, we may change 
$$
x_{2n,j} = x_{j,2n} = 0, \; (2 \leq j \leq n), \quad x_{2n,1} = x_{1,2n} \in k.
$$
Then, by the property $x^* = x$ and $j_{2n}[x] = j_{2n}$, we see 
$$
x_{1,j} = x_{j,1} = 0, \; (j \geq 2), \quad x_{1, 1} = {x_{2n,2n}}^{-1},
$$
hence may assume $x$ has the shape as in the statement, and $\Phi_{xj_{2n}}(t) = (t^2-1) \Phi_{yj_{2n-2}}(t)$, hence $y \in X_{n-1}$. The second assertion is clear. 
\end{proof}

\medskip
\begin{lem} \label{lem: lower}
Assume $x \in X_n$, $d_1(x) = 0$ and the first non-zero entry in the $2n$-th column from the bottom stands at $(2n-\ell+1, 2n)$ with $1 < \ell \leq  n$.
Then there is some $y = (y_{i,j}) \in B\cdot x$ which satisfies
\begin{eqnarray*}
&&
y_{1,j} = y_{j,1} = \delta_{j, \ell}, \quad y_{2n, j} = y_{j, 2n} = \delta_{2n-\ell+1, j}, \\
&&
y_{\ell, j} = y_{j, \ell} = \delta_{1, j}, \quad 
y_{2n-\ell+1, j} = y_{j, 2n-\ell+1} = \delta_{2n, j},
\end{eqnarray*}
where $\delta_{i,j}$ is the Kronecker delta.

\noindent
The stabilizer $B_y$ contains $t(\bfb)$ with $b_1 = b_\ell^{-1} = b \in k^\times$ and the remaining $b_i$ being $1$, and the character $\psi_1$ is not trivial on $B_y$.
\end{lem}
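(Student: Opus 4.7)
The plan is to prove the lemma in two stages: first reduce $x$ to the claimed form $y$ via a sequence of $B$-operations, then verify the stabilizer assertion by direct computation.

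The crucial observation for the reduction is that for any upper-triangular $g \in B$, the last column of $g^*$ is supported only at row $2n$, so $(gxg^*)_{\cdot, 2n} = \overline{g_{2n, 2n}} \cdot g \cdot x_{\cdot, 2n}$. Using the decomposition $B = B_0 N_B$, I construct $g$ to send $x_{\cdot, 2n}$ to a scalar multiple of $e_{2n-\ell+1}$: the upper-triangular $q$-block of $B_0$ clears the lower-half entries by Gaussian elimination (feasible since $x_{i, 2n} = 0$ for $i > 2n-\ell+1$ by hypothesis), a $B_0$-diagonal scaling normalizes the surviving entry to $1$, and then an $N_B$-element with suitable $A$ satisfying $jA + A^*j = 0$ clears the upper-half entries. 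Hermiticity of $y = gxg^*$ (preserved by the $B$-action) then gives $y_{2n, j} = \delta_{j, 2n-\ell+1}$. Next, I invoke the identity $y j_{2n} y = j_{2n}$, which follows from $y \in U(j_{2n})$ combined with $y^* = y$. Evaluating at position $(i, 2n)$ gives $\sum_l y_{i, 2n+1-l}\, y_{l, 2n} = \delta_{i, 1}$, and since $y_{l, 2n}$ is non-zero only at $l = 2n-\ell+1$, one immediately obtains $y_{i, \ell} = \delta_{i, 1}$; symmetrically, evaluation at $(2n, j)$ yields $y_{\ell, j} = \delta_{j, 1}$. An analogous reduction applied to the first column using the residual $B$-stabilizer of the already-normalized rows/columns $\ell$ and $2n$, combined with a further iteration of $y j_{2n} y = j_{2n}$ at positions $(i, 1)$ and $(1, j)$, establishes the sparse form of rows/columns $1$ and $2n-\ell+1$.

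For the stabilizer claim, take $t = t(\bfb)$ with $b_1 = b$, $b_\ell = b^{-1}$, and $b_i = 1$ for $i \notin \{1, \ell\}$. Since $\bfb \in (k^\times)^n$, one has $(t y t^*)_{ij} = t_{ii}\, t_{jj}\, y_{ij}$ (no conjugation since $b_i \in k$). The four non-zero entries of $y$ specified in the statement are each preserved because the relevant diagonal products equal $1$: at $(1, \ell)$: $b \cdot b^{-1} = 1$; at $(\ell, 1)$: identical; at $(2n-\ell+1, 2n)$ and $(2n, 2n-\ell+1)$: $t_{2n-\ell+1,\,2n-\ell+1}\, t_{2n, 2n} = b_\ell^{-1}\, b_1^{-1} = b \cdot b^{-1} = 1$. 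Entries of $y$ in rows and columns outside $\{1, \ell, 2n-\ell+1, 2n\}$ are unaffected because the corresponding $t$-diagonal entries are $1$. Hence $t \in B_y$. Finally, $\psi_1(t) = N_{k'/k}(d_1(t)) = N_{k'/k}(b_1^{-1}) = b^{-2}$, which is non-trivial for $b \neq \pm 1$, completing assertion (ii).

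The main obstacle is the reduction step, specifically verifying that the $B$-action under the unitary constraint is flexible enough to achieve the four sparse normalizations simultaneously. The constraint $jA + A^*j = 0$ on the $N_B$-component substantially limits the admissible $A$, so one must verify by a dimension count (or by an explicit construction using the zero pattern of $x_{\cdot, 2n}$) that the required $A$ exists, and analogously for the residual stabilizer action normalizing row $1$.
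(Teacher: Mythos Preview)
Your overall strategy matches the paper's: reduce via $B_0$ then $N_B$, then exploit $y j_{2n} y = j_{2n}$. However, there is a genuine gap in the reduction step which you flag in your last paragraph but do not resolve, and it breaks the logic of the argument as written.

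The specific issue is your claim that an $N_B$-element ``clears the upper-half entries'' of column $2n$. Write $A$ for the off-diagonal block of the $N_B$-element; the constraint $jA + A^*j = 0$ is equivalent to $A_{ij} = -\overline{A_{n+1-j,\,n+1-i}}$. After $B_0$ has normalized the lower-right block so that its last column is $e_{n-\ell+1}$, the $N_B$-action replaces the upper half of column $2n$ by $Q_{\cdot, n} + A_{\cdot,\, n-\ell+1}$. The entries $A_{i,\, n-\ell+1}$ for $i \neq \ell$ are free, but $A_{\ell,\, n-\ell+1}$ is forced to satisfy $A_{\ell,\, n-\ell+1} + \overline{A_{\ell,\, n-\ell+1}} = 0$. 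Hence the entry $y_{\ell, 2n}$ retains a residual in $k$ (the trace part of $x_{\ell, 2n}$), and your subsequent sentence ``since $y_{l, 2n}$ is non-zero only at $l = 2n-\ell+1$'' is unjustified; the deduction $y_{i,\ell}=\delta_{i,1}$ does not follow.

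The paper closes this gap by using $B_0$ to normalize the lower halves of \emph{both} rows $2n$ and $2n-\ell+1$ before applying $N_B$. After $N_B$ one is left with residuals $a \in k$ at $(2n,\ell)$ and $b, c \in k'$ at $(2n-\ell+1, 1)$, $(2n-\ell+1, \ell)$. Now the relation $(y j_{2n} y)_{2n, 2n}=0$ reads $a\cdot y_{2n-\ell+1,2n} + y_{\ell, 2n} = a + \overline{a} = 2a=0$, and it is only \emph{after} $a = 0$ that $y_{\ell, j} = \delta_{j,1}$ follows; this in turn forces $c = 0$, and the remaining relations are handled in the same way. The point is that killing the residual at $(\ell,2n)$ requires prior control of row $2n-\ell+1$, so the two normalizations cannot be decoupled. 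Your two-stage plan (``an analogous reduction applied to the first column using the residual $B$-stabilizer'') would need to reproduce this interlocking argument, but as written it presupposes the vanishing of the very residual that blocks the first stage.

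Your stabilizer verification is correct.
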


\begin{proof}
By the action of $B_0$, we may assume $x = (x_{i,j})$ satisfies
\begin{eqnarray*}
&&
x_{2n-\ell+1, j} = x_{j, 2n-\ell+1} = \left\{ \begin{array}{ll} 
        1 & \mbox{if } j = 2n \\ 0 & \mbox{if } n+1 \leq j \leq 2n-1,
\end{array} \right.\\
&&
x_{2n, j} = x_{j, 2n} = \left\{ \begin{array}{ll} 
        1 & \mbox{if } j = 2n-\ell+1 \\  0 & \mbox{if } n+1 \leq j \leq 2n, \, j \ne 2n-\ell+1.
\end{array} \right.
\end{eqnarray*}
Then by the action of $N_B$, we may take $y \in B\cdot x$ such that
\begin{eqnarray*}
&&
y_{2n, j} = y_{j, 2n} = \left\{ \begin{array}{ll} 
        1 & \mbox{if } j = 2n-\ell+1 \\ 
        a & \mbox{if } j = \ell\\ 
        0 & \mbox{if } j \ne \ell, 2n-\ell+1,
\end{array} \right.\\
&&
y_{2n-\ell+1, j} = \ol{y_{j, 2n-\ell+1}} = \left\{ \begin{array}{ll} 
        1 & \mbox{if } j = 2n \\ 
        b & \mbox{if } j = 1, \\ 
        c & \mbox{if } j = \ell, \\ 
        0 & \mbox{if } j \ne 1, \ell, 2n,
\end{array} \right.
\end{eqnarray*}
where $a \in k$ and $b, c \in k'$. Then, by the property $y = y^*$ and $j_{2n}[y] = j_{2n}$, we see $y$ has the required shape, and it is clear the stabilizer $B_y$ contains the elements in the statement.
\end{proof}

\medskip
\begin{lem} \label{lem: upper}
Assume $x \in X_n$, $d_1(x) = 0$ and the first non-zero entry in the $2n$-th column from the bottom stands at $(\ell, 2n)$ with $1 < \ell \leq  n$.
Then there is some $y = (y_{i,j}) \in B\cdot x$ which satisfies
\begin{eqnarray*}
&&
y_{1,j} = y_{j,1} = \delta_{j, 2n-\ell+1}, \quad y_{2n, j} = y_{j, 2n} = \delta_{\ell, j}, \\
&&
y_{\ell, j} = y_{j, \ell} = \delta_{2n, j}, \quad 
y_{2n-\ell+1, j} = y_{j, 2n-\ell+1} = \delta_{1, j},
\end{eqnarray*}
where $\delta_{i,j}$ is the Kronecker delta.

\noindent
The stabilizer $B_y$ contains $t(\bfb)$ with $b_1 = b_\ell = b \in k^\times$ and the remaining $b_i$ being $1$, and the character $\psi_1$ is not trivial on $B_y$.
\end{lem}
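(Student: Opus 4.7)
The approach parallels the proof of Lemma \ref{lem: lower}, adapted to the situation where the first nonzero entry of the $2n$-th column (counted from the bottom) lies in the upper half of the matrix rather than the lower half.

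First, using the action of $B_0$, I would normalize the $2n$-th column of $x$. By hypothesis $x_{j,2n} = 0$ for $\ell < j \leq 2n$, so the lower half of column $2n$ already vanishes. The action of $h = \mathrm{diag}(jb^{*-1}j, b) \in B_0$ multiplies the upper half of column $2n$ by $\overline{b_{n,n}}$ times the upper triangular matrix $jb^{*-1}j$; as $b$ varies over invertible upper triangular matrices, so does $jb^{*-1}j$, so I may choose $b$ to reduce column $2n$ to $e_\ell$, and by $x = x^*$ the row $2n$ simultaneously becomes $e_\ell^T$. Evaluating the $(2n,k)$-entry of $x j_{2n} x = j_{2n}$ then yields $x_{2n+1-\ell, k} = \delta_{k,1}$ for all $k$, and by hermiticity $x_{j, 2n-\ell+1} = \delta_{j,1}$. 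Hence rows and columns indexed by $2n$ and $2n-\ell+1$ already match the required form.

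Next, I would apply $N_B$ to treat the remaining rows and columns. Elements $n_A \in N_B$ preserve column $2n$ (because the $n$-th column of $x_{LR}$ is zero) and, by a parallel check, preserve column $2n-\ell+1$ as well. As in the proof of Lemma \ref{lem: lower}, a suitable choice of $A$ subject to $jA + A^*j = 0$ normalizes the entries of rows $1$ and $\ell$ outside the four designated positions, and combining $y = y^*$ with the remaining components of $y j_{2n} y = j_{2n}$ forces the asserted shape. For the stabilizer claim, I would take $t({\bf b})$ with $b_1 = b_\ell = b \in k^\times$ and all other $b_i = 1$: each nonzero entry of $y$ lies at one of the positions $(1, 2n-\ell+1)$, $(\ell, 2n)$, or the hermitian conjugate of one of these, and the corresponding rescaling factor reduces to $1$ in each case; meanwhile $\psi_1(t({\bf b})) = N_{k'/k}(b_1^{-1}) = b^{-2}$ is visibly nontrivial.

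The main technical hurdle is the second step. In Lemma \ref{lem: lower}, both indices $2n-\ell+1$ and $2n$ sat in the lower half, making the interaction with $B_0$ uniform. Here the indices $\ell$ and $2n$ straddle the partition into upper and lower halves, so the argument calls for a slightly more delicate choice of $A \in N_B$ and a more careful use of the hermiticity and unitarity relations to kill the last free parameters.
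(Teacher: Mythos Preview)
Your proposal is correct and follows essentially the same route as the paper's proof: normalize the $2n$-th column by a $B$-action, clean up row $\ell$ by a further $B$-action, and then let the relations $y^*=y$ and $j_{2n}[y]=j_{2n}$ force the remaining entries; your stabilizer check is also correct. The one noticeable difference is that you extract the shape of row and column $2n+1-\ell$ immediately from the $(2n,k)$-entries of $xj_{2n}x=j_{2n}$ before the second cleaning step, whereas the paper postpones this to the final appeal to the constraints---your ordering is harmless and arguably clarifies why the subsequent $N_B$-action leaves those columns untouched.
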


\begin{proof}
By the action of $B$, we may assume $x = (x_{i,j})$ satisfies $x_{2n, j} = x_{j, 2n} = \delta_{j, \ell}$.
Then, by the action of $B$, we may take $y \in B\cdot x$ such that
\begin{eqnarray*}
&&
y_{2n, j} = \ol{y_{j, 2n}} = 
\delta_{j,\ell}\\
&&
y_{\ell,j} = \ol{y_{j, \ell}} = \left\{ \begin{array}{ll} 
        b & \mbox{if } j = 1\\ 
        1 & \mbox{if } j = n \\ 
        0 & \mbox{if } j \ne 1, n,
\end{array} \right.
\end{eqnarray*}
where $a, b \in k'$. Then, by the property $y^* = y$ and $j_{2n}[y] = j_{2n}$, we see $y$ has the required shape, and it is clear that the stabilizer $B_y$ contained the elements in the statement.
\end{proof}

\medskip
\begin{lem} \label{lem: corner}
Assume $x \in X_n$, $d_1(x) = 0$ and $x_{2n, i} = 0$ for $2 \leq i \leq 2n$. Then 
$x$ has the following shape:  
$$
x = \left( \begin{array}{c|c|c} 
 {*} & {*} & \xi \\ 
 \hline 
 {*} & y & \begin{array}{c}0\\ \vdots \\ 0 \end{array} \\
 \hline
 \xi & 0 \cdots 0 & 0
 \end{array}\right), \quad \xi = \pm 1, \; y \in \wt{X}_{n-1}, \; \prod_{i=1}^{n-1}\, d_i(y) = 0.
$$
\end{lem}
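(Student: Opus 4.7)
The plan is to decompose the statement into its structural components---the block shape of $x$, the values $\xi = \pm 1$, and $y \in \wt{X}_{n-1}$---and the divisibility claim $\prod_{i=1}^{n-1} d_i(y) = 0$. The former will follow from direct block manipulation, while for the latter I would argue by contradiction using the characteristic polynomial constraint together with an iterated version of the reduction in Lemma~\ref{lem: d1 ne 0}.

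For the structural part: hermiticity $x^* = x$ together with the hypothesis $x_{2n, i} = 0$ for $2 \leq i \leq 2n$ immediately gives $x_{i, 2n} = 0$ for $2 \leq i \leq 2n$, yielding the $(1, 2n-2, 1)$ block decomposition displayed in the statement with $\xi := x_{2n, 1}$. Using $xj_{2n}x = j_{2n}$ (a consequence of $x^* = x$ and $x \in G$), a block computation shows that $xj_{2n}$ is upper block-triangular with diagonal blocks $\xi$, $yj_{2n-2}$, $\xi$; reading off the $(2n, 1)$-entry of $xj_{2n}x$ gives $\xi^2 = 1$, forcing $\xi = \pm 1 \in k$, while the central $(2n-2) \times (2n-2)$ block gives $yj_{2n-2}y = j_{2n-2}$, so $y \in \wt{X}_{n-1}$. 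Taking characteristic polynomials in the block-triangular form yields $\Phi_{xj_{2n}}(t) = (t-\xi)^2 \Phi_{yj_{2n-2}}(t)$, and comparison with $\Phi_{xj_{2n}}(t) = (t^2-1)^n$ gives $\Phi_{yj_{2n-2}}(t) = (t-\xi)^{n-2}(t+\xi)^n$, a polynomial whose roots $\pm\xi$ have unbalanced multiplicities $n-2$ and $n$, hence distinct from $(t^2-1)^{n-1}$.

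For the divisibility, suppose for contradiction that $d_i(y) \neq 0$ for all $1 \leq i \leq n-1$. The reduction carried out in the proof of Lemma~\ref{lem: d1 ne 0} uses only $x \in G$ and $x^* = x$ in its structural steps---the characteristic polynomial hypothesis is invoked only at the final line to upgrade $y \in \wt{X}_{m-1}$ to $y \in X_{m-1}$---so the same argument applies to any $x \in \wt{X}_m$ with $d_1(x) \neq 0$ and produces a $B_m$-equivalent representative $\mathrm{diag}(a^{-1}, y', a)$ with $y' \in \wt{X}_{m-1}$. The lower-right block determinants of this reduced form satisfy $d_1 = a$ and $d_k = a \cdot d_{k-1}(y')$ for $k \geq 2$, and since $B$-equivalence preserves non-vanishing of each $d_k$, the hypothesis propagates from $y$ to $y'$. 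Iterating $n-1$ times reduces $y$ to a $B_{n-1}$-equivalent diagonal element $\mathrm{diag}(b_1, \ldots, b_{2n-2}) \in \wt{X}_{n-1}$ satisfying $b_i b_{2n-1-i} = 1$. For such a diagonal, $yj_{2n-2}$ is an anti-diagonal involution with zero trace (no index satisfies $i = 2n-1-i$), whence $\Phi_{yj_{2n-2}}(t) = (t^2-1)^{n-1}$. But $\Phi_{yj_{2n-2}}(t)$ is $B_{n-1}$-invariant---the identity $p^* j_{2n-2} = j_{2n-2} p^{-1}$ for $p \in G_{n-1}$ makes $(p \cdot y)j_{2n-2}$ conjugate to $yj_{2n-2}$---contradicting the unbalanced characteristic polynomial above. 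The main obstacle is the bookkeeping in this iteration: verifying that the reduction of Lemma~\ref{lem: d1 ne 0} is genuinely structural and independent of the characteristic polynomial hypothesis, and tracking the non-vanishing of the block determinants through each step; the rest is routine linear algebra.
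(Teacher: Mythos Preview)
Your proof is correct and follows essentially the same route as the paper's. Both arguments establish the block shape and $\xi^2 = 1$, $y \in \wt{X}_{n-1}$ from $j_{2n}[x] = j_{2n}$, and both derive the contradiction from the characteristic polynomial: the paper phrases it as ``if $y$ were diagonalizable by $B_{n-1}$ then $\Phi_{xj_{2n}}(t) = (t^2-1)^{n-1}(t-\xi)^2$'', while you make the block-triangular factorization $\Phi_{xj_{2n}}(t) = (t-\xi)^2 \Phi_{yj_{2n-2}}(t)$ explicit and compute the two sides of the contradiction for $\Phi_{yj_{2n-2}}$ instead. Your iterated appeal to the structural part of Lemma~\ref{lem: d1 ne 0} spells out precisely why $\prod_{i} d_i(y) \neq 0$ forces $y$ to be $B_{n-1}$-diagonalizable, a step the paper leaves implicit; this is a welcome expansion but not a different idea.
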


\begin{proof}
Since $j_{2n}[x]= j_{2n}$, we see ${x_{2n,1}}^2 = 1$ and $x$ has the shape written as above and $y = y^* \in \wt{X}_{n-1}$. 
If $y$ was diagonalizable by the action of $B_{n-1}$, then $\Phi_{xj_{2n}}(t) = (t^2-1)^{n-1}(t-\xi)^2$, which contradicts to the fact $x \in X_{n}$; hence $y$ cannot be diagonalizable and $\prod_{i=1}^{n-1}\, d_i(y) = 0$.
\end{proof}

\medskip
\begin{lem} \label{big corner}
Assume that $x \in X_n$ has the following shape:
$$
x = \left( \begin{array}{c|c|c}
{*} 
& {*} & {x_r}\\
      \hline
{*} & y & {0}\\
\hline
{x_r^*} & {0} & {0}
\end{array}\right), \quad
x_r = \begin{pmatrix} 
      {*} & {} & \xi_1\\ {} & 
\iddots
& {} \\ \xi_r & {} & {0}\end{pmatrix}.
$$
{\rm (i)} Any anti-diagonal entry of $x_r$ equals to $\pm1$, and under the $B_n$-action, we may change $x_r$ into $dj_r$ with $d = Diag(\xi_1, \ldots, \xi_r)$.

\medskip
\noindent
{\rm (ii)} Assume further $r = n$. Then $r$ is even, the numbers of $+1$ and $-1$ within $\set{\xi_i}{1 \leq i \leq n}$ are the same, and $a_{ij} = 0$ if $\xi_i = \xi_j$. 
The stabilizer $B_x$ contains $t(\bfb)$ such that
$$ 
b_i = \left\{ \begin{array}{ll}
b & \mbox{if } \xi_i = 1\\
b^{-1} & \mbox{if } \xi_i = -1
\end{array}\right. \quad (b \in k^\times),
$$
and the character $\psi_1$ is not trivial on $B_x$.
\end{lem}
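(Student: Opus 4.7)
The plan is to exploit the block upper triangular structure that $xj_{2n}$ acquires in the $3\times 3$ block decomposition of sizes $(r, 2n-2r, r)$ matching the shape of $x$.

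For part (i), a direct computation shows $xj_{2n}$ is block upper triangular with diagonal blocks $x_r j_r$, $yj_{2n-2r}$, and $x_r^* j_r$, so the $(3,1)$-block of $xj_{2n}x = j_{2n}$ reduces to $x_r^* j_r x_r^* = j_r$, equivalently $(j_r x_r)^2 = 1_r$. The assumed shape of $x_r$ (zeros strictly below the antidiagonal) makes $j_r x_r$ lower triangular with diagonal $(\xi_r, \ldots, \xi_1)$; combined with $x = x^*$ forcing each $\xi_i \in k$, the involution identity gives $\xi_i^2 = 1$, hence $\xi_i = \pm 1$.

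To bring $x_r$ to $dj_r$ by the $B_n$-action, I will use $B_0$-elements with $b = Diag(1_{n-r}, b_3)$ where $b_3 \in GL_r(k')$ is upper triangular. A block computation shows that the top-right $r\times r$ corner of $x_{12}$ transforms by $x_r \mapsto (j_r b_3^{*-1} j_r)\,x_r\,b_3^*$, equivalently $j_r x_r \mapsto b_3^{*-1}(j_r x_r) b_3^*$---conjugation of the involution $j_r x_r$ by the lower triangular matrix $b_3^{*-1}$. The key step will be an induction on $r$ proving that every lower triangular involution $M$ over $k$ with diagonal $(\xi_r, \ldots, \xi_1)$ is conjugate, via some lower triangular matrix, to $Diag(\xi_r, \ldots, \xi_1)$: writing $M = \twomatrix{\xi_r}{0}{v}{M'}$, the relation $M^2 = 1$ forces $M'v = -\xi_r v$, and after inductively diagonalizing $M'$ by some $h'$ one must solve $(D'-\xi_r)w = h'v$ for the first-column correction $w$. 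This is solvable because $h'v$ lies in the $(-\xi_r)$-eigenspace of $D' = h'M'h'^{-1}$, on which $D' - \xi_r$ acts as the invertible scalar $-2\xi_r$ (invertibility of $2$ uses odd residual characteristic). Choosing $b_3$ accordingly sends $x_r$ to $dj_r$.

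For part (ii), with $r = n$ the middle row and column of blocks vanish, so $x = \twomatrix{A}{x_n}{x_n^*}{0}$ with $A = A^*$. After reducing to $x_n = dj_n$ by part (i), the factorization $\Phi_{xj_{2n}}(t) = \Phi_{x_n j_n}(t)\cdot \Phi_{x_n^* j_n}(t) = \Phi_d(t)^2 = \prod_i(t-\xi_i)^2$ equated with $(t^2-1)^n$ forces $n$ to be even and the multiplicities of $+1$ and $-1$ among the $\xi_i$ to be equal. The top-left block of $xj_{2n}x = j_{2n}$ yields $dA + Ad = 0$, i.e.\ $(\xi_i+\xi_j)A_{ij} = 0$, so $A_{ij} = 0$ whenever $\xi_i = \xi_j$. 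For the stabilizer, writing $t(\bfb)$ in the $n\times n$ block-diagonal form with $D_1 = Diag(b_1, \ldots, b_n)$ and $D_2 = j_n D_1^{-1} j_n$, the conditions $D_1 A D_1 = A$ and $D_1 dj_n D_2 = dj_n$ reduce respectively to $b_i b_j = 1$ on the support of $A$ (where $\xi_i \ne \xi_j$, so $b \cdot b^{-1} = 1$) and to $[D_1, d] = 0$, both automatic. Finally $\psi_1(t(\bfb)) = N(b_1^{-1}) = b_1^{-2}$ is non-constant in $b$, so $\psi_1|_{B_x}$ is nontrivial. The hardest part will be the inductive diagonalization of lower triangular involutions in part (i); everything else is bookkeeping on the block structure.
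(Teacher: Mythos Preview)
Your proof is correct and follows essentially the same approach as the paper. Both arguments hinge on the relation $(j_r x_r)^2 = 1_r$ (equivalently $x_r j_r x_r = j_r$, which the paper writes as $x' j_{r+1} x' = j_{r+1}$), giving $\xi_i = \pm 1$, and then use induction on $r$ to normalize $x_r$ to $dj_r$ via the Borel action, with the inductive step solved by a correction whose solvability comes down to dividing by $2$.

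The only packaging difference is that the paper carries out the induction directly on $x \in X_n$ (peeling off one anti-diagonal entry of $x_r$ at a time and writing the explicit $B_n$-element $b$ with $c = \left(\begin{smallmatrix} 1_r & -a\xi/2 \\ 0 & 1\end{smallmatrix}\right)$), whereas you first translate the problem to conjugating the lower-triangular involution $M = j_r x_r$ to its diagonal by a lower-triangular matrix and then run the induction intrinsically on $M$. Your reformulation is a bit cleaner conceptually; the paper's version is more hands-on. Part (ii) is identical in both: the block-triangular form of $xj_{2n}$ gives $\Phi_{xj_{2n}}(t) = \Phi_d(t)^2$, forcing $n$ even with equal counts of $\pm 1$, and the $(1,1)$-block of $xj_{2n}x = j_{2n}$ yields $dA + Ad = 0$, from which the stabilizer description and nontriviality of $\psi_1$ follow. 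One minor remark: your parenthetical about odd residual characteristic for the invertibility of $2$ is harmless but unnecessary, since $\mathrm{char}\,k = 0$ already makes $2 \in k^\times$.
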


\begin{proof}
(i) We prove by induction $r$. It is clear for $r = 1$. We assume the assertion holds for $r$ and consider the case $r+1$. Then we may assume that the upper right block $x_{r+1}$ can be written as
$$
x' = \left(\begin{array}{c|ccc}
a_1 & 0 & {} & \xi_1 \\
\vdots & {} & \iddots
& {}\\
a_r & \xi_r & {} & 0\\
\hline
\xi & 0 &\cdots &0
\end{array} \right).
$$
Since $j_{2n}[x] = j_{2n}$, we have $x'j_{r+1}x' = j_{r+1}$ and $a_i = 0$ if $\xi = \xi_i$.
Then setting $b \in B$ as 
$$
b = \left( \begin{array}{c|c|c}
c & 0 & 0 \\
\hline
0 & 1_{2(n-r-1)} & 0\\
\hline
0 & 0 & j_{r+1}c^{*-1}j_{r+1}
\end{array}\right), \qquad c = 
\left(\begin{array}{c|c}
1_r & {\begin{array}{c} -a_1\xi/2\\ \vdots \\ -a_r\xi/2 \end{array}}\\
\hline
0 & 1
\end{array} \right),
$$
the upper right $(r+1)$ by $(r+1)$ block of $b \cdot x$ becomes $Diag(\xi_1, \ldots, \xi_r, \xi)j_{r+1}$ as required.

(ii) Assume $r = n$. Since $\Phi_{xj_{2n}}(t) = (t^2-1)^n$, the numbers of $1$ and $-1$ within $\set{\xi_i}{1 \leq i \leq n}$ are the same.
Since $ad + da = 0_r$, we see $a_{ij} = 0$ if $\xi_i = \xi_j$, and $t(\bfb)$ the above type is contained in $B_x$.
\end{proof}

\bigskip
Now, in order to establish the condition $(A3)$, 
by Lemma~\ref{big corner},  it suffices to consider  $x$ of the following type:

\begin{eqnarray*}
&&
x = \left( \begin{array}{c|c|c} 
{*} & {*} & dj_r \\
\hline
{*} & y & 0\\
\hline
j_rd & 0 & 0
\end{array}\right) \in X_n \backslash X_n^{op}, \quad \begin{array}{l} d = Diag(\xi_1, \ldots, \xi_r), \; \xi_i = \pm 1,\\
y \in \widetilde{X}_m , \; y_{2m,j} \ne 0, \; \mbox{for some $j > 1$}. \end{array}
\end{eqnarray*}
By the action of $B_m$, we may change $y$, into the same shape as in Lemmas~\ref{lem: d1 ne 0}, \ref{lem: lower}, or \ref{lem: upper}, keeping the shape of $x$ as above. Then, $\psi_{r+1}$ is not trivial  on the stabilizer of $B_n$ at the new $x$.



\hfill\qedsymbol
  
\vspace{2cm}

\section{Spherical Fourier transform and Plancherel formula on $\SKX$}
\label{sec:4}

We consider the Schwartz space  
\begin{eqnarray*}
\SKX &=& \set{\vphi: X \longrightarrow \C}{\mbox{left $K$-invariant, compactly supported}},
\end{eqnarray*}
which is spanned by the characteristic function of $K \cdot x, \; x \in X$, and an $\hec$-submodule of $\CKX$ by the convolution product.
We define the modified spherical function 
\begin{eqnarray}
\Psi(x; z) = \omega(x; z) \big{/} \omega(1_{2n};z) \in \calR = \C[q^{\pm z_1}, \ldots, q^{\pm z_n}]^W,
\end{eqnarray}
then by Theorem~\ref{th: explicit}, Corollary~\ref{cor}, (\ref{HL-Q}), we have
\begin{eqnarray} \label{Psi-P}
\Psi(x_\lam; z) &=&
\frac{(1+q^{-1})^n}{w_n(-q^{-1})^2} \cdot c_\lam \cdot Q_\lam(z) \nonumber \\ 
&=&
c_\lam w_\lam P_\lam(z), \quad w_\lam = \frac{\wt{w_\lam}(-q^{-1})}{w_n(-q^{-1})^2}.
\end{eqnarray}
We define the spherical Fourier transform 
\begin{eqnarray} \label{def F-trans}
\begin{array}{lcll}
F :& \SKX &\longrightarrow & \calR\\
{} &  \vphi &\longmapsto &F(\vphi)(z) = \int_{X}\, \vphi(x)\Psi(x; z)dx,
\end{array}
\end{eqnarray}
where $dx$ is a $G$-invariant measure on $X$. 
There is a $G$-invariant measure on $X$, since $X$ is a disjoint union of two $G$-orbits, and $G$ is reductive. We don't need to fix the normalization of $dx$ at this moment, we will determine suitably afterward (cf.~Theorem~\ref{th: Plancherel}). We denote by $v(K\cdot x)$ for the volume of $K\cdot x$ by $dx$.
For $\lam \in \Lam_n^+$, we denote by $\ch_\lam$ the characteristic function of $K\cdot x_\lam$. Then, for $\lam \in \Lam_n^+$
\begin{eqnarray} \label{F(lam)}
F(\ch_\lam)(z) = v(K\cdot x) \Psi(x_\lam; z) = c_\lam w_\lam v(K\cdot x_\lam) P_\lam(z).
\end{eqnarray}
We regard $\calR$ as an $\hec$-module through the Satake isomorphism
$$
\lam_z: \hec \stackrel{\sim}{\longrightarrow} \C[q^{\pm 2z_1}, \ldots, q^{\pm 2z_n}]^W = \calR_0. 
$$

\medskip
\begin{thm} \label{th: sph trans}
The spherical Fourier transform $F$ gives an $\hec$-module isomorphism 
\begin{eqnarray*}
\SKX \stackrel{\sim}{\rightarrow} \C[q^{\pm z_1}, \ldots, q^{\pm z_n}]^W (= \calR),
\end{eqnarray*}
where $\calR$ is regarded as $\hec$-module via $\lam_z$. Especially, $\SKX$ is a free $\hec$-module of rank $2^n$. 
\end{thm}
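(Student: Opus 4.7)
The plan is to verify three claims in sequence: (i) $F$ is $\hec$-linear, (ii) $F$ is a $\C$-linear bijection, and (iii) $\calR$ is free of rank $2^n$ over $\calR_0 := \lam_z(\hec)$. Together, (i) and (ii) yield the $\hec$-module isomorphism, while (iii) pins down the rank.

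For (i), the normalized kernel $\Psi(x;z) = \omega(x;z)/\omega(1_{2n};z)$ inherits from $\omega$ the eigenfunction identity $(f * \Psi(\cdot;z))(x) = \lam_z(f)\,\Psi(x;z)$. Applying Fubini to $F(f*\vphi)(z) = \int_X (f*\vphi)(x)\,\Psi(x;z)\,dx$ and substituting $y = g^{-1}\cdot x$ (using $G$-invariance of $dx$) yields
\[
F(f*\vphi)(z) = \int_X \vphi(y)\!\left[\int_G f(g)\,\Psi(g\cdot y;z)\,dg\right]\!dy.
\]
Replacing $g$ by $g^{-1}$ in the inner integral identifies it with $(\check{f}*\Psi(\cdot;z))(y) = \lam_z(\check{f})\,\Psi(y;z)$, where $\check{f}(g) := f(g^{-1})$. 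Since $-\mathrm{id}\in W$ for the Weyl group of type $C_n$ and $\lam_z$ is $W$-invariant, $\lam_z(\check{f}) = \lam_{-z}(f) = \lam_z(f)$, giving $F(f*\vphi)(z) = \lam_z(f)\,F(\vphi)(z)$.

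For (ii), Theorem~\ref{thm: Cartan} supplies the $\C$-basis $\{\ch_\lam\}_{\lam\in\Lam_n^+}$ of $\SKX$. Formulas \eqref{F(lam)} and \eqref{Psi-P} give $F(\ch_\lam)(z) = c_\lam w_\lam\,v(K\cdot x_\lam)\,P_\lam(z)$ with nonzero scalar (each factor is manifestly nonzero). Since $\{P_\lam\}_{\lam\in\Lam_n^+}$ is a $\C$-basis for $\calR$ (Hall--Littlewood polynomials of type $C_n$; see Remark~\ref{rem Mac} and Appendix~\ref{sec:appB}), $F$ maps a basis to a basis up to nonzero scalars, hence is a $\C$-linear isomorphism; combined with (i), this yields an $\hec$-module isomorphism $\SKX\stackrel{\sim}{\to}\calR$.

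For (iii), set $u_i = q^{z_i} + q^{-z_i}$. Then $\calR = \C[u_1,\ldots,u_n]^{S_n}$ and, using $q^{2z_i}+q^{-2z_i} = u_i^2 - 2$, $\calR_0 = \C[u_1^2,\ldots,u_n^2]^{S_n}$; both are polynomial rings in $n$ algebraically independent generators. Galois theory applied to the combined sign-change and permutation action on the $u_i$ gives $[\mathrm{Frac}(\calR):\mathrm{Frac}(\calR_0)] = |S_n\ltimes(\Z/2\Z)^n|/|S_n| = 2^n$, and since both rings are regular the inclusion is flat, so $\calR$ is free of rank $2^n$ over $\calR_0$; transporting via $F$, $\SKX$ is free of rank $2^n$ over $\hec$. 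The main subtlety I anticipate is step (i), specifically the identity $\lam_z(\check{f}) = \lam_z(f)$, which depends on $-\mathrm{id}\in W$ together with the $W$-invariance of the Satake transform; though standard for type $C_n$, it merits explicit verification.
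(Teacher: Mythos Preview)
Your proof is correct and follows essentially the same route as the paper: bijectivity via the bases $\{\ch_\lam\}$ and $\{P_\lam\}$, $\hec$-linearity via Fubini and the eigenfunction property of $\Psi$, and the rank-$2^n$ statement via the identification $\calR=\C[u_1,\ldots,u_n]^{S_n}$, $\calR_0=\C[u_1^2,\ldots,u_n^2]^{S_n}$ with $u_i=q^{z_i}+q^{-z_i}$.

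Two small points of comparison. In step~(i) the paper simply invokes $f(g)=f(g^{-1})$ for $f\in\hec$, i.e.\ $\check f=f$; this is the same fact you use (that $-\mathrm{id}\in W$ for type $C_n$, hence $KgK=Kg^{-1}K$), but it short-circuits your detour through $\lam_z(\check f)=\lam_{-z}(f)$. In step~(iii) the paper (Proposition~\ref{prop:basis} in Appendix~\ref{sec:appB}) produces an explicit free $\calR_0$-basis $\{p_\mu : \mu\in M\}$ of $\calR$ indexed by $M=\{\sum c_i\mu_i : c_i\in\{0,1\}\}$ via a triangularity argument against the monomial symmetric functions, which is more elementary than your Galois-theory-plus-miracle-flatness-plus-Quillen--Suslin route, though both are valid.
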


\begin{proof}
Since the set $\set{\ch_\lam}{\lam \in \Lam_n^+}$ forms a $\C$-basis for $\SKX$ and 
$\set{P_\lam(z)}{\lam \in \Lam_n^+}$ forms a $\C$-basis for $\calR$ (cf. Proposition~\ref{prop:Pbasis}), $F$ is bijective by (\ref{F(lam)}).
Hence $F$ is an $\hec$-module isomorphism, since we have for $f \in \hec$ and $\vphi \in \SKX$, 
\begin{eqnarray*}
F(f * \vphi) &=& \int_{X} \int_{G}f(g)\vphi(g^{-1}\cdot x) \Psi(x; z) dgdx \\
&=&
\int_G \int_X f(g^{-1})\vphi(y)\Psi(g\cdot y; z)dy dg = \int_X \vphi(y) \int_G f(g^{-1})\Psi(g\cdot y; z) dg dy\\
&=&
\lam_z(f) \int_X \vphi(y)\Psi(y; z)dy = \lam_z(f) F(\vphi),
\end{eqnarray*}
where we use the fact $f(g) = f(g^{-1})$ for $g \in G$.
Since we see
$$
\calR = \C[q^{z_1}+q^{-z_1},\ldots, q^{z_n}+q^{-z_n}]^{S_n}, \quad 
\calR_0 = \C[q^{2z_1}+q^{-2z_1},\ldots, q^{2z_n}+q^{-2z_n}]^{S_n}, 
$$
$\calR$ is a free $\calR_0$-module of rank $2^n$, and $\SKX$ is a free $\hec$-module of rank $2^n$.
\end{proof}

\bigskip
\begin{cor} \label{cor:4.2}
All the spherical functions on $X$ are parametrized by eigenvalues\\ $z \in \left( \C/\frac{2\pi\sqrt{-1}}{\log q} \Z \right)^n/W$ through $\hec \longrightarrow \C, \; f \longmapsto \lam_z(f)$.  
The set \\
$\set{\Psi(x; z + u)}{u \in \{0, \pi\sqrt{-1}/\log q \}^n }$ forms a basis of the space of spherical functions on $X$ corresponding to $z$.
\end{cor}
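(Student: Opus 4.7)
The plan is to combine the Satake isomorphism with Theorem~\ref{th: sph trans} to parametrize spherical functions and count dimensions of $\hec$-eigenspaces. By Satake, every $\C$-algebra character of $\hec$ has the form $\lam_z$ for some $z \in \C^n$, and two parameters give the same character precisely when they differ by an element of $W \cdot \frac{\pi\sqrt{-1}}{\log q}\Z^n$; the statement's parametrization by $(\C/\frac{2\pi\sqrt{-1}}{\log q}\Z)^n/W$ is therefore a natural two-fold cover of the character space. Any nonzero spherical function, being a common $\hec$-eigenfunction, has eigenvalue equal to some $\lam_z$, which yields the parametrization.

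To identify spherical functions on the analytic side, I would use the integration pairing $\langle \vphi, \Phi\rangle := \int_X \vphi(x)\Phi(x)\,dx$, which via the basis $\{\ch_\lam\}_\lam$ of $\SKX$ identifies $\CKX$ with $\Hom_\C(\SKX, \C)$. Invoking $f(g) = f(g^{-1})$ for $f \in \hec$ together with $G$-invariance of $dx$, exactly as in the proof of Theorem~\ref{th: sph trans}, yields $\langle f * \vphi, \Phi\rangle = \langle \vphi, f * \Phi\rangle$. Hence spherical functions with eigenvalue $\lam_z$ correspond to $\hec$-equivariant linear functionals $\SKX \to \C_{\lam_z}$. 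Transporting through $F$ converts these to $\calR_0$-module homomorphisms $\calR \to \C_{\lam_z}$, and since $\calR$ is a free $\calR_0$-module of rank $2^n$ (Theorem~\ref{th: sph trans}), this Hom-space has $\C$-dimension exactly $2^n$. Consequently the $\lam_z$-eigenspace of spherical functions on $X$ has dimension $2^n$.

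Next I verify the basis claim. Each $\Psi(x; z+u)$ with $u \in \{0, \pi\sqrt{-1}/\log q\}^n$ is a spherical function with eigenvalue $\lam_{z+u} = \lam_z$, since $q^{\pm 2(z_i + u_i)} = q^{\pm 2 z_i}$. For linear independence I would pair against $\ch_\lam$: by (\ref{Psi-P}),
\[
\int_X \ch_\lam(x)\,\Psi(x; z+u)\,dx \;=\; v(K \cdot x_\lam)\,c_\lam\, w_\lam\, P_\lam(z+u),
\]
so any relation $\sum_u c_u \Psi(\cdot; z+u) \equiv 0$ forces $\sum_u c_u f(z+u) = 0$ for every $f \in \calR$, using that $\{P_\lam\}$ is a $\C$-basis of $\calR$ (Proposition~\ref{prop:Pbasis}). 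For generic $z$ the $2^n$ points $z+u$ are pairwise inequivalent modulo $W$ and $\frac{2\pi\sqrt{-1}}{\log q}\Z^n$, so the corresponding evaluation functionals on $\calR$ are linearly independent; combined with the dimension count, the $\Psi(\cdot; z+u)$ form a basis.

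The main obstacle lies in the non-generic case, where some of the points $z+u$ coalesce modulo $W$ and a priori the evaluation functionals could degenerate. To handle this I would argue intrinsically through the finite $\C$-algebra $\calR \otimes_{\calR_0} \C_{\lam_z}$, which has dimension $2^n$ throughout, and exploit the polynomial dependence of $\Psi(x; z)$ on $q^{\pm z_i}$ afforded by Theorem~\ref{th: W-inv} to specialize the generic argument (and to interpret the ``basis'' on closed loci in the expected way).
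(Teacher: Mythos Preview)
Your approach is essentially the same as the paper's: both use the integration pairing on $\SKX \times \CKX$, its $\hec$-symmetry $(f*\vphi,\Psi)=(\vphi,f*\Psi)$, and the freeness of $\SKX$ as an $\hec$-module of rank $2^n$ to bound the eigenspace dimension by $2^n$, and then exhibit the $2^n$ functions $\Psi(x;z+u)$ as linearly independent. The only real difference is in the linear-independence step: the paper invokes the decomposition \eqref{omega(z_chi)} into finer spherical functions $\omega_u$ (the character matrix $(\chi(u))$ being invertible), whereas you argue via evaluation functionals on $\calR$ at the $2^n$ shifts $z+u$. These are two phrasings of the same fact.

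Your honesty about the non-generic locus is well placed: the paper's citation of \eqref{omega(z_chi)} also rests on the ``generic $z$'' hypothesis made explicit in \S3.1, so the paper does not actually resolve that case any more than you do. In particular, when distinct shifts $z+u$ become $W$-equivalent modulo $\frac{2\pi\sqrt{-1}}{\log q}\Z^n$, the corresponding $\Psi(\,\cdot\,;z+u)$ coincide (since $\Psi(x;z)\in\calR$), so the literal set has fewer than $2^n$ elements. Thus the ``obstacle'' you flag is a genuine subtlety shared by the paper's own proof rather than a defect of your argument relative to it.
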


\begin{proof}
The former assertion is clear, since a spherical function $\Psi \in \CKX$ satisfies, by definition
\begin{eqnarray} \label{sph-property}
f * \Psi =  \lam_z(f) \Psi, \quad f \in \hec
\end{eqnarray}
for some $z \in \C^n$, and $\lam_z$ is determined by the class of $z$ in $\left( \C/\frac{2\pi\sqrt{-1}}{\log q} \Z \right)^n/W$. 

The above $\Psi(x; z+u)$ are linearly independent spherical function corresponding to the same $\lam_z$ (cf. (\ref{omega(z_chi)}) ). 
We define a pairing on $\SKX \times \CKX$ by
\begin{eqnarray*}
(\vphi, \Psi) = \int_{X}\vphi(x)\Psi(x)dx, \quad \vphi \in \SKX, \; \Psi \in \CKX,
\end{eqnarray*}
which satisfies 
\begin{equation} \label{hec-action}
(f*\vphi, \Psi) = (\vphi, f* \Psi), \quad (f \in \hec, \; \vphi \in \SKX, \; \Psi \in \CKX).
\end{equation}
Let $\set{\vphi_i}{1 \leq i \leq 2^n}$ be a free $\hec$-basis of $\SKX$. 
Assume that $\Psi_j \in \CKX, \; j \in J$ are spherical functions corresponding to the same $\lam_z$, and set 
$$
{\bf a}_j = (a_{ij})_i \in \C^{2^n}, \quad a_{ij} = \int_{X}\vphi_i(x) \Psi_j(x)dx.
$$ 
Then, by (\ref{hec-action}) and (\ref{sph-property}), it is easy to see that for $c_j \in \C$
$$
\begin{array}{lcl}
\sum_{j\in J} c_j \Psi_j = 0 \; (\mbox{in \;} \CKX) &\Longleftrightarrow& (\vphi_i, \sum_j c_j \Psi_j) = 0, 1 \leq i \leq 2^n\\
{} & \Longleftrightarrow &
\sum_{j \in J}\, c_j {\bf a}_j = {\bf0} \; (\mbox{\; in \; } \C^{2^n}).
\end{array}
$$
Hence, there are at most $2^n$ linearly independent spherical functions, and $\Psi(x; z+u)$'s form a basis.
\end{proof}    

\bigskip
We introduce the following inner product on $\calR$.
Set 
\begin{eqnarray*} 
&&
\fra^* = \left\{ \sqrt{-1}\left( \R/\frac{2\pi}{\log q}\Z \right) \right\}^n,
\end{eqnarray*}
and define a measure $d\mu = d\mu(z)$ on $\fra^*$ by 
\begin{eqnarray} \label{d-mu}
d\mu = \frac{1}{n!2^n} \cdot \frac{w_n(-q^{-1})^2}{(1+q^{-1})^n} \cdot \frac{1}{\abs{c(z)}^2}dz,
\end{eqnarray}
where $c(z)$ is defined in (\ref{c(z)}) 
and $dz$ is the Haar measure on $\fra^*$ with $\int_{\fra^*} dz = 1$.
For $P, Q \in \calR$, we define 
\begin{eqnarray*}
\pair{P}{Q}_\calR = \int_{\fra^*} P(z) \ol{Q(z)} d\mu(z).
\end{eqnarray*}
%
\begin{lem}\label{lem-measure}
For $\lam, \mu \in \Lam_n^+$, one has
$$
\pair{P_\lam}{P_\mu}_\calR = \pair{P_\mu}{P_\lam}_\calR = \delta_{\lam, \mu} w_\lam^{-1}.
$$
\end{lem}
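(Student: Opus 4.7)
The plan is to reduce the identity to Macdonald's orthogonality for Hall--Littlewood polynomials of type $C_n$, which is the natural framework since (by Remark~\ref{rem Mac} and Appendix~\ref{sec:appB}) the $P_\lam$ are precisely such polynomials up to normalization.  The argument proceeds in three steps.

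\textbf{Step 1 (unfold the Weyl sum).}  I would substitute $P_\lam = \frac{(1+q^{-1})^n}{\wt{w_\lam}(-q^{-1})} Q_\lam$ with $Q_\lam(z) = \sum_{\sigma \in W} \sigma\bigl(q^{-\pair{\lam}{z}} c(z)\bigr)$ into the inner product.  Since $\ol{P_\mu(z)}$ is $W$-invariant (Theorem~\ref{th: W-inv}; note that on $\fra^*$ one has $\ol z = -z$, which lies in $W \cdot z$ via the longest element of $W$) and the measure $dz/\abs{c(z)}^2$ is also $W$-invariant (on $\fra^*$ one has $\abs{c(z)}^2 = c(z)c(-z)$, a product over $\pm\alp$ for $\alp \in \Sigma^+$ and hence $W$-stable), each of the $|W|=n!\,2^n$ summands in $Q_\lam$ contributes equally after the substitution $z \mapsto \sigma^{-1}z$.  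The $|W|$ from unfolding cancels the $1/(n!2^n)$ in $d\mu$, and using $c(z)/\abs{c(z)}^2 = 1/\ol{c(z)}$ the expression collapses to
\[
\pair{P_\lam}{P_\mu}_\calR \;=\; \frac{w_n(-q^{-1})^2}{\wt{w_\lam}(-q^{-1})} \int_{\fra^*} \frac{q^{-\pair{\lam}{z}} \ol{P_\mu(z)}}{\ol{c(z)}}\, dz.
\]

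\textbf{Step 2 (orthogonality and diagonal).}  For $\lam \ne \mu$, I would use the triangular expansion of $P_\mu$ in the $W$-monomial basis $\{m_\nu\}$ from Appendix~\ref{sec:appB} to reduce the integral to a sum of constant-term evaluations $\int_{\fra^*} q^{\pair{\nu-\lam}{z}}/\ol{c(z)}\, dz$ for $\nu \leq \mu$; a standard support/dominance argument shows these vanish unless $\lam \leq \nu \leq \mu$, and combining with the symmetric statement obtained by swapping $\lam$ and $\mu$ in Step~1 then forces $\lam = \mu$.  For the diagonal, the integral reduces to a Macdonald constant-term identity of type $C_n$ (with two parameters, reflecting the short/long asymmetry of $c(z)$), which evaluates to $\wt{w_\lam}(-q^{-1})/w_n(-q^{-1})^2$, giving exactly $w_\lam^{-1}$.

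\textbf{Main obstacle.}  The principal difficulty lies in matching the normalizations.  The $c$-function (\ref{c(z)}) has distinct factors over short and long roots, and Macdonald's general orthogonality involves Poincar\'e-type polynomials whose specializations at $t = -q^{-1}$ must be tracked carefully to reproduce $\wt{w_\lam}(-q^{-1})$ and $w_n(-q^{-1})^2$.  The combinatorial identification with Macdonald's Hall--Littlewood polynomials of type $C_n$, together with the precise diagonal constant $w_\lam^{-1}$, is best handled through the detailed exposition provided in Appendix~\ref{sec:appB}.
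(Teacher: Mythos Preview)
Your approach is sound in spirit but takes a much longer route than the paper, and contains an arithmetic slip in the diagonal case.

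The paper's proof is a one-line specialization: it simply invokes Proposition~\ref{prop:Pbasis} (Macdonald's orthogonality for Hall--Littlewood polynomials associated with a root system), which already gives $\pair{P_\lam}{P_\mu}_\calR = \delta_{\lam,\mu}\, W(t)/W_\lam(t)$, and then reads off $W(t)=W_{\bf0}(-q^{-1},q^{-1})=w_n(-q^{-1})^2/(1+q^{-1})^n$ and $W_\lam(-q^{-1},q^{-1})=\wt{w_\lam}(-q^{-1})/(1+q^{-1})^n$ from \eqref{eq:Wl_sp}, so that the ratio is $w_n(-q^{-1})^2/\wt{w_\lam}(-q^{-1})=w_\lam^{-1}$.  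No unfolding, no constant-term analysis: the measure \eqref{d-mu} was chosen precisely so that it matches the normalization in Appendix~\ref{sec:appB}.

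Your Steps~1--2 are essentially a re-derivation of Proposition~\ref{prop:Pbasis} itself (the unfolding plus dominance argument is exactly how Macdonald proves it), so since you end up appealing to Appendix~\ref{sec:appB} for the constants anyway, the extra work buys nothing.  Moreover, your diagonal evaluation is off: after Step~1 the prefactor is already $w_n(-q^{-1})^2/\wt{w_\lam}(-q^{-1})=w_\lam^{-1}$, and the remaining integral
\[
\int_{\fra^*}\frac{q^{-\pair{\lam}{z}}\,\ol{P_\lam(z)}}{\ol{c(z)}}\,dz
\]
equals $1$, not $\wt{w_\lam}(-q^{-1})/w_n(-q^{-1})^2$.  (Expand $\ol{P_\lam}=P_\lam=m_\lam+\text{lower}$ and $1/\ol{c(z)}=1+\sum_{\beta\in Q^+\setminus\{0\}}c_\beta e^\beta$; the only surviving constant term comes from $e^\lam\cdot e^{-\lam}\cdot 1$.)  With the value you stated, the product would be $1$, not $w_\lam^{-1}$.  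Also, the $W$-invariance of $P_\mu$ comes from $P_\mu\in\calR$ by construction, not from Theorem~\ref{th: W-inv}.
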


\begin{proof}
Applying Proposition \ref{prop:Pbasis} to our case, Hall-Littlewood polynomials associated with the root system of type $C_n$ with $t_s = -q^{-1}$ and $t_\ell = q^{-1}$, we see
\begin{eqnarray*}
\pair{P_\lam}{P_\mu}_\calR 
& =& 
\delta_{\lam,\mu} \cdot \frac{W_{\bf0}(-q^{-1}, q^{-1})}{W_\lam(-q^{-1}, q^{-1})} 
= \delta_{\lam,\mu} \cdot \frac{w_n(-q^{-1})^2}{\wt{w_\lam}(-q^{-1})}\\ 
&=& \delta_{\lam,\mu} w_\lam^{-1},
\end{eqnarray*}
by (\ref{eq:Wl_sp}) and (\ref{Psi-P}). 
\end{proof}

\begin{lem} \label{lem: ratio of vol}
For $\lam, \mu \in \Lam_n^+$ such that $\abs{\lam} \equiv \abs{\mu} \pmod{2}$, 
\begin{eqnarray*}
\frac{v(K\cdot x_\lam)}{v(K \cdot x_\mu)} = \frac{c_\mu^2 w_\mu}{c_\lam^2 w_\lam}.
\end{eqnarray*}
\end{lem}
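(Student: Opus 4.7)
\emph{Plan.} Since $|\lam|\equiv|\mu|\pmod{2}$, Theorem~\ref{thm: G-orbits} places $x_\lam$ and $x_\mu$ in the same $G$-orbit. Two $G$-invariant measures on this orbit differ by one positive scalar, so $v(K\cdot x_\lam)/v(K\cdot x_\mu)$ is an intrinsic invariant of $(\lam,\mu)$. The plan is to reduce to a ratio of stabilizer volumes and then compute them block-by-block.

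\emph{Step 1: reduction to stabilizers.} Fix a base point $x_{*}\in\{x_0,x_1\}$ of the common orbit, set $H_{*}=\mathrm{Stab}_G(x_{*})$, and choose $g_\nu\in G$ with $g_\nu\cdot x_{*}=x_\nu$ for $\nu\in\{\lam,\mu\}$; write $H_{x_\nu}=g_\nu H_{*}g_\nu^{-1}=\mathrm{Stab}_G(x_\nu)$. Using the quotient measure on $G/H_{*}\cong G\cdot x_{*}$, the restriction of $dx$ to this orbit equals $c_{*}\,d\bar g$ for some $c_{*}>0$ depending only on the orbit. A standard fiber count in the map $K\times H_{*}\to Kg_\nu H_{*}$, $(k,h)\mapsto kg_\nu h$ (whose generic fibers are in bijection with $K\cap H_{x_\nu}$), gives
\[
v(K\cdot x_\nu)=\frac{c_{*}}{v(K\cap H_{x_\nu})}.
\]
Since $c_{*}$ cancels in the ratio, the lemma reduces to showing
\[
\frac{v(K\cap H_{x_\mu})}{v(K\cap H_{x_\lam})}=\frac{c_\mu^{2}w_\mu}{c_\lam^{2}w_\lam}.
\]

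\emph{Step 2: computing $v(K\cap H_{x_\lam})$.} From $g^{*}=j_{2n}g^{-1}j_{2n}$ for $g\in G$, the equation $gx_\lam g^{*}=x_\lam$ becomes $g(x_\lam j_{2n})=(x_\lam j_{2n})g$; and $(x_\lam j_{2n})^{2}=x_\lam(j_{2n}x_\lam j_{2n})=x_\lam x_\lam^{-1}=1_{2n}$, so $\xi:=x_\lam j_{2n}$ is an involution with $n$-dimensional $\pm1$-eigenspaces, and $H_{x_\lam}$ is an algebraic group of unitary type on these eigenspaces. In entries, $g\xi=\xi g$ reads $g_{ij}=(\eta_i/\eta_j)\,g_{2n+1-i,\,2n+1-j}$, where $\eta_i=(x_\lam)_{ii}$ is a $\pi$-power, so the $\calO_{k'}$-integrality of $g\in K$ forces a block structure indexed by the distinct values of $\lam_i$: (a) the off-block entries (with $\lam_i\neq\lam_j$) contribute, via integrality, a $\pi$-power totalling $q^{-\sum_i(2n-2i+1)\lam_i}=c_\lam^{2}$; (b) each diagonal block of size $m_\ell(\lam)$ becomes a residue-field unitary group contributing a volume $w_{m_\ell(\lam)}(-q^{-1})$, with $\ell=0$ supplying two independent factors because the $+1$- and $-1$-eigenspaces of $\xi$ decouple on the neutral part. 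Their total product is $\wt{w_\lam}(-q^{-1})=w_\lam\cdot w_n(-q^{-1})^{2}$; the $w_n(-q^{-1})^{2}$ is orbit-independent and absorbed into $c_{*}$. Thus $v(K\cap H_{x_\lam})=(\text{orbit constant})\cdot c_\lam^{2}w_\lam$, and the required ratio follows.

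\emph{Main obstacle.} The crux is Step~2: accurately tracking (i) the accumulated $\pi$-exponent from the off-block integrality, which must match $\sum_i(2n-2i+1)\lam_i$ on the nose, and (ii) the block decomposition into residue-field unitary factors producing $\wt{w_\lam}(-q^{-1})$, where the asymmetric role of $m_0$ (squared) versus $m_\ell$, $\ell\geq1$ (not squared), reflects the coupling of the top and bottom halves of $x_\lam$ through $j_{2n}$ on the positive parts but decoupling on the neutral part.
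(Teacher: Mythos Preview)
Your route is genuinely different from the paper's. The paper never computes stabilizer volumes; it argues spectrally. From $\Psi(x_\xi;z)=c_\xi w_\xi P_\xi(z)$ and the Hecke eigen-relation $\wt f(z)\Psi(x_\lam;z)=\sum_\xi a^\xi_\lam(f)\Psi(x_\xi;z)$, taking the inner product with $P_\mu$ and using Lemma~\ref{lem-measure} gives
\[
c_\lam w_\lam\langle\wt f P_\lam,P_\mu\rangle_\calR=a^\mu_\lam(f)\,c_\mu.
\]
Doing the same with $F(\ch_\mu)=v(K\cdot x_\mu)c_\mu w_\mu P_\mu$ and the $\hec$-compatibility of $F$ gives
\[
c_\mu w_\mu v(K\cdot x_\mu)\langle\wt f P_\mu,P_\lam\rangle_\calR=a^\mu_\lam(f)\,c_\lam v(K\cdot x_\lam).
\]
Since $\wt f$ is real on $\fra^*$ the two inner products agree; choosing $f=\mathrm{char}(KgK)$ with $g\cdot x_\mu=x_\lam$ (possible by the parity hypothesis) forces $a^\mu_\lam(f)>0$, and the ratio drops out by division. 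The orthogonality of Hall--Littlewood polynomials does all the work; no stabilizer ever appears.

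Your geometric approach is viable, and your $n=1$ heuristic is on target: diagonalizing $\xi=x_\lam j_{2n}$ gives $H_{x_\lam}\cong U(D_\lam)\times U(D_\lam)$ with $D_\lam=\mathrm{Diag}(\pi^{\lam_i})$, and in those coordinates the condition $g\in K$ becomes congruences on $u+v$ and $u-v$ that do separate into an ``off-block'' $\pi$-power and block unitary pieces, with $m_0$ indeed behaving differently because $(u-v)_{ij}\in\pi^{\lam_j}\calO_{k'}$ is vacuous exactly when $\lam_j=0$. But two points need care beyond your flagged obstacle. First, $H_*\cong U(1_n)\times U(1_n)$ is \emph{not} compact for $n\geq 2$ (the hermitian form $1_n$ is isotropic since $-1$ is a norm from $k'$), so you cannot normalize by total volume; you must work with the Haar on $H_*$ directly and transport it by conjugation, and your ``direct'' computation in Step~2 is implicitly in a different (intrinsic) normalization on $H_{x_\lam}$. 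Second, matching those two normalizations is precisely a Jacobian coming from the change of basis $P=\begin{pmatrix}D_\lam & D_\lam\\ j_n & -j_n\end{pmatrix}$, and showing that this Jacobian is orbit-independent (so that it cancels in the ratio) is the missing step. The paper's argument bypasses all of this bookkeeping by letting the already-proved explicit formula and $\langle P_\lam,P_\mu\rangle_\calR=\delta_{\lam,\mu}w_\lam^{-1}$ carry the load.
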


\begin{proof}
We understand $\Psi(x; z) \in \CKX = \Hom_\C(\SKX, \C)$ as
\begin{eqnarray} \label{Psi}
\Psi(\;; z) = 
\sum_{\xi \in \Lam_n^+}\, \Psi(x_\xi; z) \ch_\xi.
\end{eqnarray}
For $f \in \hec$, we write
\begin{eqnarray}\label{f-xi}
f* \ch_\xi = \sum_{\nu \in \Lam_n^+}\, a^\xi_\nu(f) \ch_\nu, 
\end{eqnarray}
where almost all $a^\xi_\nu(f) = 0$.
Since $\Psi(x; z)$ is a spherical function associated with $[f \mapsto \wt{f}(z) = \lam_z(f)]$ for $f \in \hec$, we have by (\ref{Psi}) and (\ref{f-xi})
\begin{eqnarray*} \label{f-Psi}
\wt{f}(z) \Psi(x_\lam; z) &=& (f * \Psi(\;; z))(x_\lam) = 
\sum_{\xi \in \Lam_n^+}\, \Psi(x_\xi; z) (f*\ch_\xi)(x_\lam) \nonumber \\
&=&
\sum_{\xi \in \Lam_n^+}\, a^\xi_\lam(f) \Psi(x_\xi; z),
\end{eqnarray*}
and by \eqref{Psi-P} 
\begin{eqnarray}  
c_\lam w_\lam \wt{f}(z) P_\lam(z) = \sum_{\xi \in \Lam_n^+}\, a^\xi_\lam(f) c_\xi w_\xi P_\xi(z).
\end{eqnarray}
Taking the inner product with $P_\mu(z)$, we have by Lemma~\ref{lem-measure},
\begin{eqnarray} \label{lem4-4-(1)} 
c_\lam w_\lam \pair{\wt{f}(z) P_\lam(z)}{P_\mu(z)}_\calR = a^\mu_\lam(f) c_\mu.
\end{eqnarray}
By the compatibility of $F$ with $\hec$-action and (\ref{F(lam)}), taking the image of $F$ of (\ref{f-xi}) for $\xi = \mu$, we have
\begin{eqnarray*}
c_\mu w_\mu v(K \cdot x_\mu) \wt{f}(z) P_\mu(z) = \sum_{\nu \in \Lam_n^+}\, a^\mu_\nu(f) c_\nu w_\nu v(K \cdot x_\nu) P_\nu(z).
\end{eqnarray*}
Taking the inner product with $P_\lam(z)$, we have by Lemma~\ref{lem-measure},
\begin{eqnarray} \label{lem4-4-(2)}
c_\mu w_\mu v(K \cdot x_\mu) \pair{\wt{f}(z) P_\mu(z)}{P_\lam(z)}_\calR = a^\mu_\lam(f) c_\lam v(K \cdot x_\lam).
\end{eqnarray}
By (\ref{lem4-4-(1)}) and (\ref{lem4-4-(2)}), we have for any $f \in \hec$

\begin{equation}  \label{lem4-4-(3)}
c_\mu^2 w_\mu v(K \cdot x_\mu) \pair{\wt{f}(z) P_\mu(z)}{P_\lam(z)}_\calR = 
c_\lam^2 w_\lam v(K \cdot x_\lam)\pair{\wt{f}(z) P_\lam(z)}{P_\mu(z)}_\calR. 
\end{equation}
Now assume $\abs{\lam} \equiv \abs{\mu} \pmod{(2)}$, then $x_\lam = g \cdot x_\mu$ for some $g \in G$.
Taking $f \in \hec$ as the characteristic function of the double coset $KgK$, we see $a^\mu_\lam(f) > 0$ and 
\begin{eqnarray*}
0 \ne \pair{\wt{f}(z)P_\lam(z)}{P_\mu(z)}_\calR =
\pair{P_\lam(z)}{\wt{f}(z) P_\mu(z)}_\calR
= \pair{\wt{f}(z)P_\mu(z)}{P_\lam(z)}_\calR.
\end{eqnarray*}
Thus we obtain, by (\ref{lem4-4-(3)})
\begin{eqnarray*}
c_\mu^2 w_\mu v(K \cdot x_\mu)  =
c_\lam^2 w_\lam v(K \cdot x_\lam),
\end{eqnarray*}
which completes the proof.
\end{proof}

\bigskip
We recall that $X$ decomposed into two $G$-orbits: 
$$
X = G \cdot x_0 \sqcup G \cdot x_1, \qquad x_0 = 1_{2n}, \; x_1 = Diag(\pi, 1, \ldots, 1, \pi^{-1}).
$$

\begin{thm}[Plancherel formula on $\SKX$] \label{th: Plancherel} 
Let $d\mu$ be the measure defined by (\ref{d-mu}). By the normalization of $G$-invariant measure $dx$ such that 
\begin{eqnarray} \label{vol K-lam}
v(K\cdot x_\lam) = c_\lam^{-2}w_\lam^{-1}, \qquad \lam \in \Lam_n^+,
\end{eqnarray}
one has for any $\vphi, \psi \in \SKX$
\begin{eqnarray} \label{the formula}
\int_{X} \vphi(x)\ol{\psi(x)} dx = \int_{\fra^*}F(\vphi)(z) \ol{F(\psi)(z)} d\mu(z).
\end{eqnarray}
\end{thm}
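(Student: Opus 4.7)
The plan is to verify the identity on a $\C$-basis. By sesquilinearity of both sides of \eqref{the formula} and the fact that $\set{\ch_\lam}{\lam \in \Lam_n^+}$ is a $\C$-basis of $\SKX$, it suffices to show
\begin{equation*}
\int_X \ch_\lam(x)\ol{\ch_\mu(x)}\,dx \;=\; \int_{\fra^*} F(\ch_\lam)(z)\,\ol{F(\ch_\mu)(z)}\,d\mu(z) \qquad (\lam, \mu \in \Lam_n^+).
\end{equation*}

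First I would check that the normalization \eqref{vol K-lam} is actually achievable. By Theorem~\ref{thm: G-orbits}, $X$ decomposes into exactly two $G$-orbits $G\cdot x_0$ and $G\cdot x_1$, so a $G$-invariant measure on $X$ is determined by a pair of positive real scalars, one per orbit. Lemma~\ref{lem: ratio of vol} asserts that within each $G$-orbit the quantity $c_\lam^2 w_\lam\, v(K\cdot x_\lam)$ is constant; hence by adjusting those two scalars one may impose the single condition $v(K\cdot x_\lam) = c_\lam^{-2} w_\lam^{-1}$ for every $\lam\in\Lam_n^+$ simultaneously.

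Granting this normalization, the left-hand side reduces immediately to $\delta_{\lam,\mu}\, v(K\cdot x_\lam)= \delta_{\lam,\mu}\, c_\lam^{-2} w_\lam^{-1}$, since the $\ch_\lam$ are real-valued characteristic functions of the disjoint $K$-orbits. For the right-hand side, combining \eqref{F(lam)} with Lemma~\ref{lem-measure} and the fact that $c_\lam$, $w_\lam$ and $v(K\cdot x_\lam)$ are all real gives
\begin{align*}
\int_{\fra^*} F(\ch_\lam)(z)\,\ol{F(\ch_\mu)(z)}\,d\mu(z)
&= c_\lam c_\mu w_\lam w_\mu\, v(K\cdot x_\lam)\, v(K\cdot x_\mu)\,\pair{P_\lam}{P_\mu}_\calR \\
&= \delta_{\lam,\mu}\, c_\lam^2 w_\lam^2\, v(K\cdot x_\lam)^2 \cdot w_\lam^{-1} \\
&= \delta_{\lam,\mu}\, c_\lam^2 w_\lam\, v(K\cdot x_\lam)^2,
\end{align*}
and substituting $v(K\cdot x_\lam)= c_\lam^{-2} w_\lam^{-1}$ turns this into $\delta_{\lam,\mu}\, c_\lam^{-2} w_\lam^{-1}$, matching the left-hand side.

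No substantial obstacle is expected at this stage: the entire analytic content of the Plancherel identity has already been packaged into Lemma~\ref{lem: ratio of vol} (which pins down the ratios of the $v(K\cdot x_\lam)$ inside each $G$-orbit via compatibility of $F$ with the $\hec$-action) and into Lemma~\ref{lem-measure} (orthogonality of Hall--Littlewood polynomials of type $C_n$ on $\fra^*$). The present theorem merely records the unique pair of scalar normalizations, one on each $G$-orbit, that converts that orthogonality into Parseval's identity for the spherical Fourier transform $F$. The only care needed is to track that all the constants $c_\lam, w_\lam, v(K\cdot x_\lam)$ appearing in $F(\ch_\lam)$ are real, which is immediate from the formulas in Theorem~\ref{th: explicit} and \eqref{Psi-P}.
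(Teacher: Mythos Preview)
Your proof is correct and follows essentially the same approach as the paper: reduce by sesquilinearity to the basis $\{\ch_\lam\}$, invoke Lemma~\ref{lem: ratio of vol} to justify the normalization \eqref{vol K-lam}, and then compare both sides using \eqref{F(lam)} together with the orthogonality relation $\pair{P_\lam}{P_\mu}_\calR=\delta_{\lam,\mu}w_\lam^{-1}$ from Lemma~\ref{lem-measure}. The paper additionally records the explicit values $v(K\cdot x_0)=1$ and $v(K\cdot x_1)=q^{2n-1}(1-(-q^{-1})^n)^2/(1+q^{-1})$, but this is cosmetic.
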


\begin{proof}
We may normalize $dx$ on $X$ according to the $G$-orbits as   
$v(K\cdot x_0) = 1$ on $G \cdot x_0$ and 
$$
v(K\cdot x_1) = q^{2n-1}\frac{(1-(-q^{-1})^n)^2}{1+q^{-1}} \; \mbox{ on } \; G \cdot x_1,
$$
then it satisfies (\ref{vol K-lam}) by Lemma~\ref{lem: ratio of vol}.
It suffices to show the identity (\ref{the formula}) for $\ch_\lam$ and $\ch_\mu$ ($\lam, \mu \in \Lam_n^+$). Indeed,
\begin{eqnarray*}
\int_{X}\, \ch_\lam(x) \ol{\ch_\mu(x)} dx = \delta_{\lam, \mu}\, v(K\cdot x_\lam) = \delta_{\lam, \mu} c_\lam^{-2} w_\lam^{-1},
\end{eqnarray*}
while
\begin{eqnarray*}
\lefteqn{\int_{\fra^*}F(\ch_\lam)(z) \ol{F(\ch_\mu)(z)} d\mu(z)}\\
 &=&
c_\lam w_\lam v(K\cdot x_\lam) \cdot c_\mu w_\mu v(K \cdot x_\mu) \pair{P_\lam(z)}{P_\mu(z)}_\calR
= \delta_{\lam, \mu} c_\lam^{-2} w_\lam^{-1},
\end{eqnarray*}
which completes the proof.
\end{proof}

\begin{cor}[Inversion formula]
For any $\vphi \in \SKX$, 
$$
\vphi(x) = \int_{\fra^*} F(\vphi)(z) \Psi(x; z) d\mu(z), \quad x \in X.
$$
\end{cor}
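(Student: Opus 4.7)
The plan is to deduce the inversion formula directly from the Plancherel formula of Theorem~\ref{th: Plancherel} by testing against characteristic functions of $K$-orbits. Since both sides of the desired identity are left $K$-invariant in $x$ (the left side by hypothesis, the right side because $\Psi(x;z)$ is $K$-invariant in $x$), it suffices to verify the identity at $x = x_\lam$ for every $\lam \in \Lam_n^+$, using the Cartan decomposition of Theorem~\ref{thm: Cartan}.

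First I would record a reality observation: for $z \in \fra^*$ one has $\overline{z} = -z$, so $\overline{q^{z_i}} = q^{-z_i}$; hence for any $P \in \calR$ with real coefficients (such as $P_\lam$, $Q_\lam$, and consequently $\Psi(x_\lam; z) = c_\lam w_\lam P_\lam(z)$) the Weyl-group element $\tau$ gives $\overline{P(z)} = P(-z) = P(z)$, i.e.\ $\Psi(x_\lam; z) \in \R$ on $\fra^*$. In particular, by \eqref{F(lam)}, $\overline{F(\ch_\lam)(z)} = v(K\cdot x_\lam)\,\Psi(x_\lam; z)$.

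Next I would apply the Plancherel formula \eqref{the formula} with $\psi = \ch_\lam$. The left-hand side collapses to
\begin{equation*}
\int_X \vphi(x)\overline{\ch_\lam(x)}\,dx = \vphi(x_\lam)\,v(K\cdot x_\lam),
\end{equation*}
using the $K$-invariance of $\vphi$, while the right-hand side becomes
\begin{equation*}
\int_{\fra^*} F(\vphi)(z)\,\overline{F(\ch_\lam)(z)}\,d\mu(z) = v(K\cdot x_\lam)\int_{\fra^*} F(\vphi)(z)\,\Psi(x_\lam;z)\,d\mu(z)
\end{equation*}
by the reality remark above. Dividing out $v(K\cdot x_\lam) \ne 0$ (which is justified by \eqref{vol K-lam} since $c_\lam$ and $w_\lam$ are nonzero) gives the desired inversion formula at $x_\lam$, hence on all of $X$.

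There is no real obstacle here: the argument is purely formal once the Plancherel identity is in hand, and the only small point to double-check is the reality of $\Psi(x;z)$ on $\fra^*$, which follows from the explicit formula (Theorem~\ref{th: explicit}) together with $W$-invariance (Theorem~\ref{th: W-inv}). Alternatively, one could observe that the map $\vphi \mapsto \bigl[x \mapsto \int_{\fra^*} F(\vphi)(z)\Psi(x;z)\,d\mu(z)\bigr]$ is $\hec$-linear and agrees with the identity on the $\C$-basis $\{\ch_\lam\}$ of $\SKX$ by the computation above, giving the same conclusion.
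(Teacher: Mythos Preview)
Your proof is correct and follows essentially the same route as the paper: apply the Plancherel formula with $\psi$ equal to the characteristic function of a $K$-orbit, then use that $\Psi(x;z)$ is real-valued on $\fra^*$ to identify $\overline{F(\psi)(z)}$ with $v(K\cdot x)\Psi(x;z)$. One minor slip: the Weyl-group element giving $P(-z)=P(z)$ is the full sign change (the longest element of $W$), not $\tau$, which only negates $z_n$; since $-\mathrm{id}\in W$ for type $C_n$, your conclusion is unaffected.
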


\begin{proof}
For any $x \in X$, 
setting $\psi_x$ the characteristic function of $K\cdot x$,
we have
\begin{eqnarray*}
\vphi(x) &=& \frac{1}{v(K\cdot x)} \int_{X} \vphi(y) \psi_x(y) dy \\
&=&
\frac{1}{v(K\cdot x)} \int_{\fra^*} F(\vphi)(z) \ol{F(\psi_x)(z)} d\mu(z)\\
&=&
\int_{\fra^*} F(\vphi)(z) \Psi(x;z) d\mu(z). 
\end{eqnarray*}
\end{proof}

\vspace{2cm}



\appendix

\section{The spaces of unitary hermitian matrices}
\label{sec:appA}

In this subsection, let $k'$ be a quadratic extension of a field $k$ of characteristic $0$, and consider hermitian matrices with respect to $k'/k$. As usual we denote by $A^* \in M_{nm}(k')$ the conjugate transpose of $A \in M_{mn}(A)$. 

We fix a natural number $m$ and set $\calH_m(k') = \set{x \in GL_m(k')}{x^* = x}$. We define the unitary group for $x \in \calH_m(k')$ by
\begin{equation} \label{unitary gr}
U(x) = \set{g \in GL_m(k')}{x[g] = x}, \quad \mbox{where }\; x[g] = g^*xg = g^* \cdot x,
\end{equation}
and, in particular
\begin{eqnarray*}
G = U(j_m) 
 \quad \mbox{for }\; 
j_m = \begin{pmatrix}
0 & {} & 1\\ {} & \iddots
& {}\\1 & {} & 0
\end{pmatrix} \in M_m.
\end{eqnarray*}
We introduce the space $\wt{X}$ of unitary hermitian matrices and the $G$-action as follows:
\begin{eqnarray}
&&
\wt{X} = \set{x \in G}{x^* = x} = \set{x \in GL_m(k')}{x = x^*, \; (x j_m)^2 = 1_m}, \nonumber \\
&&
\label{the action}
g \cdot x = gxg^* = gxj_mg^{-1}j_m \quad (g \in G, \; x \in \wt{X}).
\end{eqnarray}
We consider these objects as the sets of $k$-rational points of algebraic sets defined over $k$. 
We denote by $\ol{k}$ the algebraic closure of $k$ and realize $G(\ol{k})$ in $\wt{\G} = Res_{k'/k}(GL_m)$ as follows. 
We understand  $\wt{\G} = GL_m(\ol{k}) \times GL_m(\ol{k})$ with the Galois group $\Gamma = Gal(\ol{k}/k)$ action defined by 
\begin{eqnarray*}
\sigma(g_1, g_2) = \left\{ \begin{array}{ll}
({g_1}^\sigma, {g_2}^\sigma) & \mbox{if } \sigma\vert_{k'} = id \\[2mm]
({g_2}^\sigma, {g_1}^\sigma) & \mbox{if } \sigma\vert_{k'} = \tau, 
\end{array} \right. \quad (\sigma \in \Gamma, \; (g_1, g_2) \in \wt{\G}),
\end{eqnarray*}
where $g^\sigma = ({g_{ij}}^\sigma)$ for $g = (g_{ij}) \in GL_m(\ol{k})$ and $\langle{\tau}\rangle = Gal(k'/k)$.
Then 
\begin{eqnarray*}
&&
\wt{\G}(k) = \set{(g, g^\tau)}{g \in GL_m(k')},\\
&&
G(\ol{k}) = \set{(g_1, g_2) \in \wt{\G} }{{}^tg_2j_m g_1 = j_m} 
= \set{(g, j_m{}^tg^{-1} j_m)}{g \in GL_m(\ol{k})},
\end{eqnarray*}
and the involution $[g \longmapsto g^*]$ on $G$ can be extended as $(g_1, g_2)^* = ({}^tg_2, {}^tg_1)$ on $G(\ol{k})$. Hence we see
\begin{eqnarray*}
\wt{X}(\ol{k}) 
&=&
\set{(x, j_m{}^tx^{-1}j_m)}{x = j_mx^{-1}j_m \in GL_m(\ol{k})} \nonumber \\
&=&
\set{(x, j_m{}^tx^{-1}j_m)}{x \in GL_m(\ol{k}), \;(xj_m)^2 = 1_m},
\end{eqnarray*}
and the action of $G(\ol{k})$ on $\wt{X}(\ol{k})$ can be written as 
\begin{eqnarray*}
(g, j_m{}^tg^{-1} j_m)) \star (x, j_m{}^tx^{-1}j_m) &=&
(g, j_m{}^tg^{-1} j_m) (x, j_m{}^tx^{-1}j_m) (j_mg^{-1}j_m, {}^tg) \nonumber \\
&=&
(gxj_mg^{-1}j_m, j_m{}^tg^{-1}{}^tx^{-1}j_m{}^tg) \nonumber \\
&=& 
(gxj_mg^{-1}j_m, j_m{}^t(gxj_mg^{-1}j_m)^{-1}j_m).
\end{eqnarray*}
Hence we may identify
\begin{eqnarray} 
&&
G(\ol{k}) = GL_m(\ol{k}), \quad \wt{X}(\ol{k}) = \set{x \in G(\ol{k})}{(xj_m)^2 = 1_m}, \nonumber\\ 
&& \label{star-action}
g \star x = gxj_mg^{-1}j_m, \quad (g \in G(\ol{k}), \; x \in \wt{X}(\ol{k})),
\end{eqnarray}
where  
$g \star x = g\cdot x$ if $g \in G$ and $x \in X$ (cf.~(\ref{the action}) ).

\begin{prop} \label{prop A-1}
The space $\wt{X}(\ol{k})$ decomposes into $G(\ol{k})$-orbits as follows: 
$$
\wt{X}(\ol{k}) = \bigsqcup_{i=0}^{m} \set{x \in \wt{X}(\ol{k})}{\Phi_{xj_m}(t) = (t-1)^i(t+1)^{m-i}},
$$
where $\Phi_y(t)$ is the characteristic polynomial of the matrix $y$.
\end{prop}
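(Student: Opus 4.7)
The strategy is to convert the $\star$-action on $\wt{X}(\ol{k})$ into the ordinary conjugation action on the set of involutions of $GL_m(\ol{k})$, via the bijection $x \longmapsto y := xj_m$. Since $j_m$ is a permutation matrix with $j_m^2 = 1_m$, this map identifies $\wt{X}(\ol{k})$ with the set of involutions
$$
\calI(\ol{k}) = \set{y \in GL_m(\ol{k})}{y^2 = 1_m}.
$$

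First I would check that the action transports correctly. Using $j_m^2 = 1_m$,
$$
(g \star x) j_m = g x j_m g^{-1} j_m \cdot j_m = g x j_m g^{-1} = g\, y\, g^{-1},
$$
so $x \longmapsto y$ intertwines $\star$ with the usual conjugation action of $GL_m(\ol{k})$ on $\calI(\ol{k})$. Consequently the $G(\ol{k})$-orbits in $\wt{X}(\ol{k})$ correspond bijectively to the conjugacy classes in $\calI(\ol{k})$, and $\Phi_{xj_m}(t) = \Phi_y(t)$ is constant on each orbit.

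Next I would classify conjugacy classes in $\calI(\ol{k})$. Any $y \in \calI(\ol{k})$ is annihilated by the separable polynomial $t^2 - 1 = (t-1)(t+1)$, hence is diagonalizable over $\ol{k}$ with eigenvalues in $\{+1, -1\}$. Setting $i = \dim_{\ol{k}} \ker(y - 1_m)$, we have $\Phi_y(t) = (t-1)^i (t+1)^{m-i}$, and $y$ is conjugate in $GL_m(\ol{k})$ to the diagonal matrix $\mathrm{Diag}(\underbrace{1,\ldots,1}_{i}, \underbrace{-1,\ldots,-1}_{m-i})$. Thus two involutions are conjugate if and only if they share the same invariant $i \in \{0,1,\ldots,m\}$, equivalently the same characteristic polynomial. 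This gives exactly $m+1$ conjugacy classes.

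Translating back through the bijection $x \leftrightarrow y$ yields the stated decomposition: the subsets $\set{x \in \wt{X}(\ol{k})}{\Phi_{xj_m}(t) = (t-1)^i(t+1)^{m-i}}$ for $i = 0, 1, \ldots, m$ are pairwise disjoint (distinct characteristic polynomials), cover $\wt{X}(\ol{k})$ (every involution has such a characteristic polynomial), and each is a single $G(\ol{k})$-orbit. There is no substantial obstacle here; the only point requiring care is the verification of the intertwining identity $(g \star x) j_m = g(xj_m)g^{-1}$, after which everything reduces to the elementary fact that involutions in $GL_m(\ol{k})$ are classified up to conjugacy by the dimension of the $(+1)$-eigenspace.
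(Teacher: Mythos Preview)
Your proof is correct and follows essentially the same approach as the paper: both transport the $\star$-action to conjugation on involutions via $x \mapsto xj_m$, then invoke the classification of involutions in $GL_m(\ol{k})$ by characteristic polynomial. Your version is in fact slightly more explicit than the paper's, since you verify the intertwining identity $(g\star x)j_m = g(xj_m)g^{-1}$ and spell out why $t^2-1$ being separable (using $\mathrm{char}\,k = 0$) forces diagonalizability; the paper simply asserts these points.
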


\begin{proof}
We consider the following $G(\ol{k})$-set:
\begin{eqnarray*}
&&
Y(\ol{k}) = \set{y \in G(\ol{k})}{y^2 = 1}, \\
&&
g \circ y = gyg^{-1} \qquad(g \in G(\ol{k}), \; y \in Y(\ol{k})).
\end{eqnarray*}
Then the $G(\ol{k})$-orbits in $Y(\ol{k})$ are determined by characteristic polynomials as follows
\begin{eqnarray*} \label{corres-orbit}
Y(\ol{k}) = 
\bigsqcup_{i=0}^{m}\, \set{y \in Y(\ol{k})}{\Phi_y(t) = (t-1)^i(t+1)^{m-i}}. 
\end{eqnarray*}
Since the map  
$$
\psi: \wt{X}(\ol{k}) \longrightarrow Y(\ol{k}), \quad x \longmapsto xj_m
$$
is bijective and $G(\ol{k})$-equivariant, we have the $G(\ol{k})$-orbit decomposition for $\wt{X}(\ol{k})$ as required.
\end{proof}

\bigskip
We take the $G(\ol{k})$-orbit in $\wt{X}(\ol{k})$ containing $1_m$ and set
\begin{eqnarray*} \label{def of X}
X(\ol{k}) = G(\ol{k}) \star 1_m, \qquad X = X(\ol{k}) \cap G = X(\ol{k}) \cap \wt{X}.
\end{eqnarray*}
By Proposition~\ref{prop A-1} and its proof, we see
\begin{eqnarray} \label{s1-X}
X = \set{x \in \wt{X}}{\Phi_{xj_m}(t) = \Phi_{j_m}(t)}.
\end{eqnarray}


\section{Root systems and Hall-Littlewood polynomials}
\label{sec:appB}
In this appendix, we summarize several properties of symmetric polynomials,  in particular Hall-Littlewood polynomials, which
are understood as special cases of Macdonald polynomials.
 In Appendix \ref{sec:appB}, 
we use $\N_0=\N\cup\{0\}$.

\subsection{Root systems}
Let $V$ be an $n$-dimensional real vector space equipped with inner
product $\langle~,~\rangle$.  Let $\Sigma$ be an irreducible
reduced root system of rank $n$ in $V$.  We fix a set
$\Sigma_0=\set{\alpha_i}{1 \leq i \leq n}$ of simple roots, and denote by $\Sigma^+$ 
the set of all positive roots of $\Sigma$ with respect to $\Sigma_0$.
We denote by $\Lambda_0=\set{\mu_i}{1 \leq i \leq n}$  the set of fundamental weights satisfying
$$
\langle\alpha_i^\vee,\mu_j\rangle=\delta_{i,j}, \quad  \alpha^\vee=2\alpha/\langle\alpha,\alpha\rangle \in V,
$$
and set 
\begin{eqnarray*}
&&
\mbox{the weight lattice: } \Lambda =\bigoplus_{i=1}^n\mathbb{Z}\mu_i,\\
&&
\mbox{the set of all dominant weights: } \Lam^+ = \bigoplus_{i=1}^n\N_0\mu_i,\\
&&
\mbox{the coroot lattice: } Q^\vee=\bigoplus_{i=1}^n\mathbb{Z}\alpha_i^\vee.
\end{eqnarray*}
Let $\tau_\alpha$ be the reflection defined by
$\tau_\alpha(v)=v-\langle v,\alpha^\vee\rangle\alpha$ for $v\in V$, and
$W=W(\Sigma)$ be the Weyl group generated by
$\set{\tau_\alpha}{\alpha\in\Sigma}$.

\subsection{Symmetric polynomials}
We denote by $\mathbb{C}[\Lam]$ the group algebra of the lattice
$\Lambda$, which is spanned by the formal exponentials $e^\lambda$
with $\lambda\in \Lam$.  The Weyl group acts on $\C[\Lam]$ via the
action on $\Lam$: $\sigma(e^\alp) = e^{\sigma(\alp)}$ for $\sigma \in
W$ and $\lam \in \Lam$. We denote by $\mathbb{C}[\Lam]^W$ the
subalgebra of $W$-invariant elements in $\mathbb{C}[\Lam]$.  For simplicity of notation we
set
\begin{eqnarray} \label{calR and calR0}
\mathcal{R}=\mathbb{C}[\Lambda]^W, \quad  \mathcal{R}_0=\mathbb{C}[2\Lambda]^W = \calR \cap \C[2\Lam].
\end{eqnarray}
For each $\lam \in \Lam^+$, we set  
\begin{equation*}
  m_{\lambda}=\sum_{\nu\in W\lambda}e^\nu, \qquad W\lambda=\{\sigma\lambda\in\Lambda~|~\sigma\in
W\},
\end{equation*}
then it is easy to see that $\set{m_\lambda}{\lambda\in \Lambda^+}$ forms a $\C$-basis for $\mathcal{R}$.
We define a partial order $\prec$ on $\Lam$ by
\begin{eqnarray} \label{partial order}
\mu\prec\lambda \; \mbox{ if and only if }\; \lambda-\mu \in Q^+, 
\end{eqnarray}
where
$$
Q^+ = \bigoplus_{i=1}^n\N_0\alp_i \; \subset \; \Lam.
$$

\begin{lem} 
  \label{lm:mono}
  For $\lambda,\mu\in \Lambda^+$, there exist some $a_\nu \in \N_{0} \; (\nu \in \Lam^+)$ such that
  \begin{equation*}
    m_{\lambda}m_{\mu}=m_{\lambda+\mu}+\sum_{\substack{\nu\in \Lambda^+\\\nu\precneqq\lambda+\mu}} a_\nu m_{\nu}.
  \end{equation*}
\end{lem}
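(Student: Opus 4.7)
The plan is to expand the product directly in the group algebra and then regroup by Weyl orbits. Writing
\begin{equation*}
m_\lambda m_\mu \;=\; \sum_{\nu_1 \in W\lambda,\, \nu_2 \in W\mu}\, e^{\nu_1+\nu_2},
\end{equation*}
the left hand side is manifestly $W$-invariant, hence expands uniquely as $\sum_{\nu \in \Lambda^+} a_\nu m_\nu$ with $a_\nu \in \N_0$: indeed, for any dominant $\nu$, the monomial $e^\nu$ appears only in $m_\nu$ and with coefficient $1$, so $a_\nu$ equals the number of pairs $(\nu_1,\nu_2) \in W\lambda \times W\mu$ with $\nu_1+\nu_2=\nu$.

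The whole argument then rests on the classical fact that for any dominant weight $\kappa \in \Lambda^+$ and any $\sigma \in W$, one has $\kappa - \sigma\kappa \in Q^+$. I would establish this by induction on $\ell(\sigma)$: writing $\sigma = s_i \sigma'$ with $\ell(\sigma) = \ell(\sigma')+1$ forces $\sigma'^{-1}\alpha_i \in \Sigma^+$, whence $\langle \sigma'\kappa, \alpha_i^\vee\rangle = \langle \kappa, (\sigma'^{-1}\alpha_i)^\vee\rangle \geq 0$ by dominance of $\kappa$, and $\kappa - \sigma\kappa = (\kappa - \sigma'\kappa) + \langle \sigma'\kappa,\alpha_i^\vee\rangle\,\alpha_i$ is then in $Q^+$ by the inductive hypothesis.

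With this lemma in hand, the two substantive verifications are short. First, to bound the support: if $a_\nu \neq 0$ with $\nu \in \Lambda^+$, pick $\sigma,\tau \in W$ with $\sigma\lambda+\tau\mu = \nu$; then
\begin{equation*}
(\lambda+\mu) - \nu \;=\; (\lambda - \sigma\lambda) + (\mu - \tau\mu) \;\in\; Q^+,
\end{equation*}
so $\nu \preceq \lambda+\mu$. Second, to pin down the top coefficient: suppose $\sigma\lambda + \tau\mu = \lambda+\mu$ and pair both sides with any strictly dominant coweight $\rho^\vee$ (so that $\langle \alpha,\rho^\vee\rangle > 0$ for every $\alpha \in \Sigma^+$). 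By the key lemma, $\langle\lambda-\sigma\lambda,\rho^\vee\rangle \geq 0$ and $\langle\mu-\tau\mu,\rho^\vee\rangle \geq 0$, and their sum vanishes, forcing both to vanish; since $\rho^\vee$ pairs strictly positively with every nonzero element of $Q^+$, this gives $\sigma\lambda=\lambda$ and $\tau\mu=\mu$. Thus $(\lambda,\mu)$ is the only pair contributing to $a_{\lambda+\mu}$, giving $a_{\lambda+\mu}=1$, and all other surviving $\nu$ must satisfy $\nu \precneqq \lambda+\mu$.

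The main obstacle is simply the auxiliary lemma $\kappa - \sigma\kappa \in Q^+$ for dominant $\kappa$; once that is in place, both the order statement and the leading-term uniqueness become essentially one-line consequences. The positivity $a_\nu \in \N_0$ requires no extra work because it is built into the counting interpretation from the outset.
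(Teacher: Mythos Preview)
Your proof is correct and follows essentially the same approach as the paper: expand over $W\lambda \times W\mu$ and use the standard fact $\kappa - \sigma\kappa \in Q^+$ for dominant $\kappa$ to bound every contributing dominant weight by $\lambda+\mu$. You are simply more explicit than the paper, which quotes $\lambda \succ \sigma_1\lambda$, $\mu \succ \sigma_2\mu$ without proof and leaves the leading coefficient $a_{\lambda+\mu}=1$ implicit, whereas you supply the induction on $\ell(\sigma)$ and the $\rho^\vee$-pairing argument for uniqueness.
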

\begin{proof} 
Let $A=W\lambda+W\mu$ and $\kappa\in A$.
The $W$-invariance of
$A$ implies that there exists $\sigma\in W$ such
that $\sigma\kappa\in A\cap \Lambda^+$. 
Write $\nu=\sigma\kappa=\sigma_1\lambda+\sigma_2\mu$ for $\sigma_1,\sigma_2\in W$.
Since 
$\lambda\succ \sigma_1\lambda$ and $\mu\succ\sigma_2\mu$,
we have 
$\lambda+\mu\succ\sigma_1\lambda+\sigma_2\mu=\nu$.
Hence we obtain the result.
\end{proof}

\medskip
\begin{prop}
  \label{prop:basis}
The algebra $\mathcal{R}$ is a free $\mathcal{R}_0$-module of rank $2^n$. Any family $\set{p_\mu}{\mu \in M}$ satisfying
\begin{eqnarray}
\label{triang}
&&
 p_\mu=m_\mu+\sum_{\substack{\nu\in \Lambda^+\\\nu\precneqq\mu}} b_{\mu \nu} \, m_{\nu}, \quad (b_{\mu\nu} \in \C),
\\ 
&&
M=\set{\sum_{i=1}^n c_i \mu_i}{c_i\in\{0,1\}, \; 1 \leq i \leq n}
\notag
\end{eqnarray}
forms a free $\calR_0$-basis for $\calR$.
\end{prop}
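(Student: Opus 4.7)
The plan is to establish a key triangular relation that simultaneously proves freeness and the rank computation. The central observation is that every $\lambda\in\Lambda^+$ admits a unique decomposition
\[
\lambda = 2\nu_\lambda + \mu_\lambda, \qquad \nu_\lambda\in\Lambda^+,\; \mu_\lambda\in M,
\]
obtained by writing each coefficient $a_i$ of $\lambda=\sum a_i\mu_i$ as $a_i=2b_i+c_i$ with $c_i\in\{0,1\}$. Since $\{m_\lambda\}_{\lambda\in\Lambda^+}$ is a $\mathbb{C}$-basis of $\mathcal{R}$ and $\{m_{2\nu}\}_{\nu\in\Lambda^+}$ is a $\mathbb{C}$-basis of $\mathcal{R}_0$ (the $W$-orbit of $2\nu\in 2\Lambda$ coincides with $2\cdot W\nu$), a freeness statement for $\mathcal{R}$ over $\mathcal{R}_0$ with basis $\{p_\mu\}_{\mu\in M}$ is equivalent to the assertion that $\{m_{2\nu}p_\mu\}_{\nu\in\Lambda^+,\,\mu\in M}$ is a $\mathbb{C}$-basis of $\mathcal{R}$.

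The next step is to establish the triangular relation
\[
m_{2\nu_\lambda}\, p_{\mu_\lambda} = m_\lambda + \sum_{\substack{\lambda'\in\Lambda^+ \\ \lambda'\precneqq\lambda}} r_{\lambda,\lambda'}\, m_{\lambda'}, \qquad r_{\lambda,\lambda'}\in\mathbb{C},
\]
for every $\lambda\in\Lambda^+$. Using the defining expansion \eqref{triang} of $p_{\mu_\lambda}$, the product expands as $m_{2\nu_\lambda}m_{\mu_\lambda}+\sum_{\nu\precneqq\mu_\lambda}b_{\mu_\lambda,\nu}\,m_{2\nu_\lambda}m_\nu$. Applying Lemma~\ref{lm:mono} to each factor, the leading term of $m_{2\nu_\lambda}m_{\mu_\lambda}$ is $m_{2\nu_\lambda+\mu_\lambda}=m_\lambda$, while every remaining summand is supported on $\{m_{\lambda'}:\lambda'\precneqq\lambda\}$ because $2\nu_\lambda+\nu\precneqq 2\nu_\lambda+\mu_\lambda=\lambda$ whenever $\nu\precneqq\mu_\lambda$, and the lower-order corrections in each product of two orbit sums remain strictly below the sum of the indices.

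With the triangular relation in hand, the change-of-basis matrix from $\{m_{2\nu_\lambda}p_{\mu_\lambda}\}_{\lambda\in\Lambda^+}$ to $\{m_\lambda\}_{\lambda\in\Lambda^+}$ is lower unitriangular with respect to $\prec$; since the set $\{\lambda'\in\Lambda^+:\lambda'\preceq\lambda\}$ is finite (a standard fact for the dominance order on dominant weights, as $\lambda-\lambda'\in Q^+$ forces only finitely many possibilities), one may invert this matrix row by row, expressing each $m_\lambda$ as a finite $\mathbb{C}$-linear combination of the $m_{2\nu_{\lambda'}}p_{\mu_{\lambda'}}$ with $\lambda'\preceq\lambda$. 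Hence $\{m_{2\nu}p_\mu\}_{\nu\in\Lambda^+,\,\mu\in M}$ is a $\mathbb{C}$-basis of $\mathcal{R}$.

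Finally, to deduce the module statement, any $f\in\mathcal{R}$ has a unique expansion $f=\sum_{\nu,\mu}c_{\nu,\mu}\,m_{2\nu}p_\mu$, which regrouping yields $f=\sum_{\mu\in M}\bigl(\sum_\nu c_{\nu,\mu}m_{2\nu}\bigr)p_\mu$ with coefficient in $\mathcal{R}_0$; uniqueness of $c_{\nu,\mu}$ transfers to uniqueness of the $\mathcal{R}_0$-coefficients, so $\{p_\mu\}_{\mu\in M}$ is a free $\mathcal{R}_0$-basis of $\mathcal{R}$ of rank $|M|=2^n$. The main obstacle is the triangular bookkeeping in the second step: one must verify that all lower-order terms produced by Lemma~\ref{lm:mono} (applied to both $m_{2\nu_\lambda}m_{\mu_\lambda}$ and to the corrections coming from $p_{\mu_\lambda}$) stay strictly below $\lambda$ in $\prec$, and that inversion of the resulting infinite lower-triangular system is legitimate because each row has finite support.
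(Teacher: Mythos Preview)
Your proposal is correct and follows essentially the same approach as the paper: the paper's proof consists of the single observation that $\Lambda^+=\bigsqcup_{\mu\in M}(\mu+2\Lambda^+)$ together with an appeal to Lemma~\ref{lm:mono}, and you have simply unpacked this into an explicit unitriangular change-of-basis argument. Your version is more detailed (you make explicit the $\C$-basis $\{m_{2\nu}p_\mu\}$, the finiteness of $\{\lambda'\preceq\lambda\}$ needed for inversion, and the regrouping into $\mathcal{R}_0$-coefficients), but the underlying idea is identical.
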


\begin{proof}
Since $\Lam^+$ is a disjoint union of $\mu + 2\Lam^+, \; (\mu \in M)$,
we see $\set{p_\mu}{\mu \in M}$ forms a $\calR_0$-basis for $\calR$, by Lemma~\ref{lm:mono}.
\end{proof}

\subsection{Hall-Littlewood polynomials}
We introduce a family of orthogonal polynomials, which is a special
case of Macdonald polynomials, or can be regarded as Hall-Littlewood
polynomials associated with root systems.

We fix parameters $t_\alpha\in\mathbb{R}$ with $|t_\alpha|<1$ for each
$\alpha\in\Sigma$ such that $t_\alpha=t_\beta$ if $(\alp, \alp)=(\beta, \beta)$,
hence there are at most two independent parameters among $t_\alpha$'s.
We define Hall-Littlewood polynomial associated with the root system $\Sigma$, for $\lambda\in\Lam^+$, 
\begin{equation}
\label{eq:def_HL}
  P_\lambda=\frac{1}{W_\lambda(t)}\sum_{\sigma\in W}\sigma\Bigl(e^\lambda\prod_{\alpha\in\Sigma^+}\frac{1-t_\alpha e^{-\alpha}}{1-e^{-\alpha}}\Bigr), 
\end{equation}
where $W_\lambda$ is the stabilizer in $W$ at $\lambda$, and the Poincar\'e polynomial of a subgroup $W'$ of $W$ is defined by
\begin{equation*}
  W'(t)=\sum_{\sigma\in W'}\prod_{\alpha\in \Sigma^+\cap (-\sigma\Sigma^+)}t_\alpha .
\end{equation*}

We define the measure $d\mu = d\mu(z)$ on $\fra^*$ as follows.
\begin{align*} 
  \fra^* &= \sqrt{-1}\Bigl( V/\frac{2\pi}{\log q}Q^\vee \Bigr), \qquad \int_{\fra^*}dz = 1, \nonumber\\
\label{measure}
  d\mu &= d\mu(z) = \frac{W(t)}{\sharp W} \cdot \frac{dz}{\abs{c(z)}^2},\\
  c(z) &= \prod_{\alp \in \Sigma^+} \frac{1 -t_\alpha q^{\pair{\alp}{z}}}{1-q^{\pair{\alp}{z}}}.\nonumber
\end{align*}
We regard $e^\lambda\in\mathbb{C}[\Lam]$ as a function on
$\fra^*$ via $e^\lambda(z)=q^{-\langle \lambda,z\rangle}$.  Then 
we have
\begin{equation*}
  P_\lambda(z)=\frac{1}{W_\lambda(t)}\sum_{\sigma\in W}\sigma\Bigl(q^{-\langle\lambda,z\rangle}
c(z)\Bigr),
\end{equation*}
and
an
inner product on $\calR$ is defined by
\begin{equation*}
  \label{eq:def_inner}
  \langle P, Q\rangle_\calR = \int_{\fra^*} P(z)\ol{Q(z)} d\mu(z)
\end{equation*}
for $P,Q\in\calR$. Then it is known that the following holds.

\medskip
\begin{prop}[{\cite[\S3,\S10]{Mac}}]
  \label{prop:Pbasis}
The subset $\{P_\lambda\}_{\lambda\in \Lam^+}$ of $\calR$ satisfies the
  triangularity condition of type \eqref{triang} and forms an orthogonal
  basis of $\calR$ with respect to the inner product $\langle\ ,\ \rangle_\calR$
  with
  \begin{equation*}
    \langle P_\lambda, P_\mu\rangle_\calR =\delta_{\lambda,\mu} 
\frac{W(t)}{W_{\lambda}(t)}.
  \end{equation*}
\end{prop}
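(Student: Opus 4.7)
The plan is to follow the standard treatment in \cite[\S3,\S10]{Mac}. First, although each summand in \eqref{eq:def_HL} has apparent poles along the hyperplanes $e^{-\alpha}=1$, pairing $\sigma$ with $\sigma\tau_{\alpha_i}$ for each simple root $\alpha_i$ makes the simple-reflection poles cancel after summation over $W$, so $P_\lambda\in\C[\Lam]^W=\calR$. For the triangularity, expand
\begin{equation*}
\prod_{\alpha\in\Sigma^+}\frac{1-t_\alpha e^{-\alpha}}{1-e^{-\alpha}}=1+\sum_{\beta\in Q^+\setminus\{0\}}c_\beta\, e^{-\beta}
\end{equation*}
as a formal power series in the $e^{-\alpha_i}$. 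Multiplying by $e^\lambda$ confines the exponents that appear to $e^\mu$ with $\mu\preceq\lambda$, and after $W$-symmetrization the contributions to $m_\lambda$ come only from $\sigma\in W_\lambda$; the defining formula for $W_\lambda(t)$ makes these combine to yield coefficient $1$ in front of $m_\lambda$, while the remaining monomials produce $m_\nu$ with $\nu\precneqq\lambda$, giving the required triangular expansion.

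For orthogonality and the norm formula the cleanest route is Macdonald's family of commuting self-adjoint difference operators on $\calR$, for which the $P_\lambda$ are simultaneous eigenfunctions with eigenvalues separating distinct $W$-orbits of weights (first for generic $t_\alpha$, then by polynomial continuity at our specialization). Self-adjointness against $\langle\,,\,\rangle_\calR$, which in turn follows from $W$-invariance and the symmetry $\overline{c(z)}=c(-z)$ of the weight on $\fra^*$, forces $\langle P_\lambda, P_\mu\rangle_\calR=0$ whenever $\lambda\ne\mu$. The diagonal value $\langle P_\lambda, P_\lambda\rangle_\calR = W(t)/W_\lambda(t)$ is then obtained via Macdonald's constant term identity: expanding one factor of $P_\lambda$ through \eqref{eq:def_HL}, collapsing the outer $\sigma$-sum by $W$-invariance of $d\mu$, and evaluating the resulting constant term of $e^{-\lambda}$ times the symmetrizer against $|c(z)|^{-2}$ using the Macdonald evaluation formula.

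The main obstacle is precisely this last step. Triangularity and $\calR$-membership are elementary, and orthogonality is a formal consequence of having enough commuting self-adjoint operators with a separating spectrum, but the explicit value $W(t)/W_\lambda(t)$ rests on Macdonald's constant term identity, a substantial root-system-dependent calculation established case by case in \cite{Mac} and later unified via Cherednik's double affine Hecke algebra approach. Since the present proposition is cited directly from Macdonald, I would invoke \cite[\S10]{Mac} at this point rather than reproduce the evaluation.
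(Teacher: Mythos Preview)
The paper does not supply a proof of this proposition at all: it is stated with attribution to \cite[\S3,\S10]{Mac} and used as a black box. Your proposal is therefore not competing with an argument in the paper but rather sketching what lies behind the citation, and you yourself conclude by deferring to \cite{Mac} for the norm evaluation. In that sense your approach and the paper's agree: both ultimately invoke Macdonald.

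Your sketch of the ingredients is broadly accurate. One remark: the appeal to Macdonald's commuting difference operators is really machinery for the full two-parameter Macdonald polynomials; in the Hall--Littlewood limit treated in \cite[\S10]{Mac} the orthogonality and norm are handled more directly, without needing that apparatus or a specialization argument. This does not affect correctness, but it slightly overstates what is required here.
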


By definition, we see $W(t)=W_{\mathbf{0}}(t)$. In the case of the root system of type $C_n$,
the explicit forms of the Poincar\'e polynomial $W_{\lambda}(t)$ and 
the Hall-Littlewood polynomial $P_\lambda$
with the special parameters used in the paper
are given
in \eqref{eq:Wl_sp} 
and
in \eqref{eq:HL_sp} respectively.

\subsection{Poincar\'e polynomials of stabilizers 
and Hall-Littlewood polynomials in the root system of type $C_n$}

\begin{prop}
  \label{prop:poincare}
  For $\lambda\in\Lam^+$, let
$$
\Sigma_{0,\lam} = \{\alpha\in\Sigma_0~|~\langle\alpha,\lambda\rangle=0\}=\bigsqcup_k
  \Phi_k,
$$ 
where each $\Phi_k$ is a connected component of the Dynkin
  diagram corresponding to $\Sigma_{0, \lam}$.  Denote by $W_k$  the Weyl group
  generated by the reflections for $\alpha\in \Phi_k$.  Then
  $W_\lambda(t)=\prod_{k}W_k(t)$.
\end{prop}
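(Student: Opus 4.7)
My plan is to reduce the problem to two standard facts about stabilizers and parabolic subgroups in Weyl groups, then exploit the orthogonality of distinct connected components $\Phi_k$.

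First I would recall the classical description of stabilizers of dominant weights: for $\lambda \in \Lambda^+$, the stabilizer $W_\lambda$ is the parabolic subgroup generated by the simple reflections $\tau_\alpha$ with $\alpha \in \Sigma_{0,\lambda}$. Let $\Sigma_k \subset \Sigma$ denote the root subsystem generated by $\Phi_k$, with positive roots $\Sigma_k^+ = \Sigma_k \cap \Sigma^+$, and let $W_k$ be as in the statement. Since $\Phi_k$ and $\Phi_{k'}$ lie in different connected components of the Dynkin diagram of $\Sigma_{0,\lambda}$ for $k \neq k'$, every $\alpha \in \Phi_k$ is orthogonal to every $\beta \in \Phi_{k'}$; consequently the subsystems $\Sigma_k$ and $\Sigma_{k'}$ are mutually orthogonal, so $\tau_\alpha$ and $\tau_\beta$ commute and moreover each $\tau_\alpha$ with $\alpha \in \Phi_k$ fixes $\Sigma_{k'}$ pointwise. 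This gives an internal direct product decomposition $W_\lambda = \prod_k W_k$, with each $\sigma \in W_\lambda$ written uniquely as $\sigma = \prod_k \sigma_k$, $\sigma_k \in W_k$.

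The key step is to show that the inversion set $\Sigma^+ \cap (-\sigma \Sigma^+)$ decomposes correspondingly as $\bigsqcup_k \bigl(\Sigma_k^+ \cap (-\sigma_k \Sigma_k^+)\bigr)$. Two standard facts are needed here: (i) for any parabolic subgroup $W_I$ generated by a subset $I \subset \Sigma_0$, one has $W_I(\Sigma^+ \setminus \Sigma_I^+) = \Sigma^+ \setminus \Sigma_I^+$, so the inversions of any element of $W_I$ lie in $\Sigma_I^+$; applied to $I = \Sigma_{0,\lambda}$ this confines inversions of $\sigma \in W_\lambda$ to $\bigsqcup_k \Sigma_k^+$; (ii) the orthogonality noted above implies $\sigma_k$ fixes $\Sigma_{k'}^+$ pointwise whenever $k' \neq k$, so within $\Sigma_k^+$ the action of $\sigma = \prod_j \sigma_j$ coincides with that of $\sigma_k$. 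Combining (i) and (ii) gives the claimed decomposition of inversion sets.

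Finally, the factorization of the Poincar\'e polynomial is a direct computation: since $t_\alpha$ depends only on the $W$-orbit (equivalently the root length) of $\alpha$,
\begin{equation*}
W_\lambda(t) = \sum_{\sigma \in W_\lambda} \prod_{\alpha \in \Sigma^+ \cap (-\sigma\Sigma^+)} t_\alpha = \sum_{(\sigma_k) \in \prod_k W_k} \prod_k \prod_{\alpha \in \Sigma_k^+ \cap (-\sigma_k \Sigma_k^+)} t_\alpha = \prod_k W_k(t),
\end{equation*}
where the last equality uses that the sum over $\prod_k W_k$ factors into a product of sums. The main obstacle, modest as it is, is the careful verification of step (i) above (confinement of inversions to $\Sigma_I^+$); this is standard but deserves an explicit argument, which can be given by induction on the length of $\sigma \in W_I$ using that any simple reflection $\tau_\alpha$ with $\alpha \in I$ permutes $\Sigma^+ \setminus \{\alpha\}$ and hence preserves $\Sigma^+ \setminus \Sigma_I^+$.
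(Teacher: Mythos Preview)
Your proof is correct and follows essentially the same route as the paper: both reduce to the fact that $W_\lambda$ is the standard parabolic subgroup attached to $\Sigma_{0,\lambda}$ and then use the decomposition into orthogonal components. The emphasis is complementary---the paper spends its proof showing that $\Sigma_{0,\lambda}$ is a base for the root subsystem $\Sigma_\lambda = \{\alpha\in\Sigma \mid \langle\alpha,\lambda\rangle=0\}$ (whence the parabolic description follows) and leaves the Poincar\'e-polynomial factorization implicit, whereas you cite the parabolic description and instead spell out the inversion-set decomposition and the resulting factorization explicitly.
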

\begin{proof}
Since the stabilizer $W_\lambda$ is generated by the reflections
that fix $\lambda$ (cf.~\cite[1.12, Theorem]{Hum}), we see that
$W_\lambda$ is generated by $\{\sigma_\alpha\}_{\alpha\in \Sigma_\lambda}$, where
\begin{equation*}
  \Sigma_\lambda=\{\alpha\in\Sigma~|~\langle\alpha,\lambda\rangle=0\}.
\end{equation*}
Since $\Sigma_\lambda$ is a root system, it is sufficient to show that 
$\Sigma_{0,\lambda}$ is a fundamental system of $\Sigma_\lambda$.
Write $\alpha=\sum_{i=1}^n c_i\alpha_i\in\Sigma_\lambda$.
Then we have all $c_i\geq0$ or all $c_i\leq0$. Furthermore
\begin{equation*}
  0=\langle\alpha,\lambda\rangle=\sum_{i=1}^n c_i\langle\alpha_i,\lambda\rangle.
\end{equation*}
Since
$\langle\alpha_i,\lambda\rangle\geq 0$ for $1\leq i\leq n$,
we have
$c_i=0$ for such $i$ that
$\langle\alpha_i,\lambda\rangle\neq 0$.
Thus 
\begin{equation*}
  \alpha=\sum_{\alpha_i\in\Sigma_{0,\lambda}} c_i\alpha_i.
\end{equation*}
\end{proof}

We use the realization of the root system of $C_n$ introduced in \eqref{short and long}.
Then we have
\begin{align*}
  \Sigma_0 &= \set{e_i - e_{i+1}}{1 \leq i \leq n-1}\cup\{2e_n\},\\
  \Lambda_0 &= \set{\mu_i=\sum_{k=1}^i e_k}{1 \leq i \leq n},
\end{align*}
and  we can identify the set $\Lam^+$ of dominant weights with 
$$\Lam_n^+ = \set{\lam \in \Z^n}{\lam_1 \geq \cdots \geq \lam_n \geq 0}.$$
 We fix any $\lam \in \Lam_n^+$ and understand 
\begin{eqnarray*}
\lam = \sum_{i=1}^n \lam_ie_i = \sum_{i=1}^{n-1}\, (\lam_i-\lam_{i+1})\mu_i + \lam_n\mu_n \in \Lam^+ .
\end{eqnarray*}
Then we have the decomposition
\begin{equation*}
  \Sigma_{0,\lam} = \set{\alpha\in\Sigma_0}{\langle\alpha,\lambda\rangle=0}=\bigsqcup_{k=0}^\infty
  \Phi_k,
\end{equation*}
where
\begin{equation*}
  \Phi_k=
  \begin{cases}
    \{\alpha_i\in\Sigma_0~|~\lambda_i=\lambda_{i+1}=k\} \qquad &(k>0)\\
    \{\alpha_i\in\Sigma_0~|~\lambda_i=0\} \qquad &(k=0).
  \end{cases}
\end{equation*}
Let $n_k=\sharp \{i~|~\lambda_i=k\}$.  Note that $n_k=\sharp \Phi_k+1$
for $k>0$ and $n_0=\sharp \Phi_0$.  Then we see that $W_k$ is
isomorphic to the Weyl group of type $A_{n_k-1}$ if $k>0$,
and that $W_0$ is of type $C_{n_0}$. 
Setting $W_{A_0}(t_s)=W_{C_0}(t_s,t_\ell)=1$ formally, we obtain by Proposition~\ref{prop:poincare}
\begin{equation*}
\label{eq:Wl_gen}
  W_\lambda(t)=\Bigl(\prod_{k=1}^\infty W_{A_{n_k-1}}(t_s)\Bigr)\cdot W_{C_{n_0}}(t_s,t_\ell),
\end{equation*}
where $t_s$ and $t_\ell$ are attached to short roots and long roots respectively.

The Poincar\'e polynomials of type $A_{n-1}$ and of type $C_n$ are
respectively given as
\begin{equation*}
\label{eq:Wl_gen1}
  \begin{split}
    W_{A_{n-1}}(t)&=\prod_{i=1}^{n-1}\frac{1-t^{i+1}}{1-t},\\
    W_{C_n}(t_s,t_\ell)&=\prod_{i=0}^{n-1}(1+t_s^it_\ell)\frac{1-t_s^{i+1}}{1-t_s}.
  \end{split}
\end{equation*}
In the case $t_s=-q^{-1}$ and $t_\ell=q^{-1}$, we obtain
\begin{equation*}
  \begin{split}
    W_{A_{n-1}}(-q^{-1})&=\frac{w_n(-q^{-1})}{(1+q^{-1})^n},\\
    W_{C_n}(-q^{-1},q^{-1})&=\frac{w_n(-q^{-1})^2}{(1+q^{-1})^n}.
  \end{split}
\end{equation*}
Finally we arrive at the explicit form of the Poincar\'e polynomials 
and the Hall-Littlewood polynomials in the root system of type $C_n$
with $t_s=-q^{-1}$ and $t_\ell=q^{-1}$ as follows.
\begin{gather}
  \label{eq:Wl_sp}
  W_\lambda(-q^{-1},q^{-1})=\frac{\wt{w_\lam}(-q^{-1})}{(1+q^{-1})^n},\\
  \label{eq:HL_sp}
  \begin{split}
    P_\lambda(z) &= 
    \frac{(1+q^{-1})^n}{\wt{w_\lam}(-q^{-1})}\cdot
    \sum_{\sigma \in W}\, \sigma\left(  q^{-\pair{\lam}{z}} c(z) \right), \\
    c(z) &= \prod_{\alp \in \Sigma_s^+}\, \frac{1 + q^{\pair{\alp}{z}-1}}{1 - q^{\pair{\alp}{z}}}
    \prod_{\alp \in \Sigma_\ell^+}\, \frac{1 - q^{\pair{\alp}{z}-1}}{1 - q^{\pair{\alp}{z}}}. 
  \end{split}
\end{gather}

\section{Relation with the space $X_T$}
\label{sec:appC}

We assume that $k'/k$ be an unramified quadratic extension of $\frp$-adic fields. In this subsection we explain the relevance of the present space $X = X_n$ to the spaces introduced in \cite{Oda}, and show the expectation there is correct for odd residual characteristic case. 

In \cite{Oda}, we have considered for each $T \in \calH_n(k')$
\begin{eqnarray*}
X_T = \frX_T/U(T), \qquad \frX_T = \set{x \in M_{2n,n}(k')}{H_n[x] = T}, \quad H_n = \twomatrix{0}{1_n}{1_n}{0},
\end{eqnarray*}
where 
$
U(H_n) = \set{g \in GL_{2n}(k')}{H_n[g] = H_n} 
$
acts homogeneously on $X_T$ by the left multiplication, and the stabilizer at a point in $X_T$ is isomorphic to $U(T) \times U(T)$ (cf.~\cite[Lemma~1.1]{Oda}).  
We assume $T$ is diagonal and realize $X_T$ as a set of $k$-rational points in an algebraic set defined over $k$. 
We set
\begin{eqnarray*}
\frX_T(\ol{k}) &=& \set{(x,y) \in M_{2n, n}(\ol{k}) \oplus M_{2n, n}(\ol{k})}{{}^tyH_n x = T},
\end{eqnarray*}
on which $\Gamma = Gal(\ol{k}/k)$ acts by 
$$
\sigma(x, y) = \left\{\begin{array}{ll}
(x^\sigma, y^\sigma) & \mbox{if } \sigma \vert_{k'} = id\\
(y^\sigma, x^\sigma) & \mbox{if } \sigma \vert_{k'} = \tau,
\end{array}
\right.
$$
where $\langle\tau\rangle = Gal(k'/k)$.
We set
\begin{eqnarray*}
&&
\U(H_n) = U(H_n)(\ol{k}) = \set{(g_1, g_2) \in GL_{2n}(\ol{k})\times GL_{2n}(\ol{k})}{{}^tg_2 H_n g_1 = H_n},\\
&&
\U(T) = U(T)(\ol{k}) = \set{(h_1, h_2) \in GL_{n}(\ol{k}) \times GL_n(\ol{k})}{{}^th_2 T h_1 = T},\\
&&
\X_T(\ol{k}) = \frX_T(\ol{k})/\U(T) \; \supset \; \X_T(k) = {\X_T}^\Gamma,
\end{eqnarray*}
where we can consider the similar $\Gamma$-action on $\U(H_n)$ and $\U(T)$, since $H_n$ and $T$ are $\Gamma$-invariant.
We identify 
$$
\begin{array}{ll}
\U(H_n)^\Gamma =  \set{(g, \ol{g}) \in GL_{2n}(k') \times GL_{2n}(k')}{{}^t\ol{g}H_ng = H_n} & \mbox{ with } U(H_n),\\[2mm]
\U(T)^\Gamma =  \set{(h, \ol{h}) \in GL_{n}(k') \times GL_{n}(k')}{{}^t\ol{h}Th = T} & \mbox{ with } U(T),
\end{array}
$$
where and henceforth we write $\ol{g}$ instead of $g^\tau$ for a matrix $g$ with entries in $k'$.

\begin{lem}
The map
$$
\vphi_T: X_T \longrightarrow \X_T(k), \; xU(T) \longmapsto (x, \ol{x})\U(T)
$$
is injective.
\end{lem}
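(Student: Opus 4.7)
The plan is to unwind the definitions. Suppose $\varphi_T(xU(T))=\varphi_T(yU(T))$ for $x,y\in\frX_T$. By definition of $\X_T(\ol{k})=\frX_T(\ol{k})/\U(T)$, there exists $(h_1,h_2)\in\U(T)$ with
$(x,\ol{x})=(y,\ol{y})\cdot(h_1,h_2)=(yh_1,\ol{y}h_2)$, so that
$x=yh_1$ and $\ol{x}=\ol{y}h_2$ hold in $M_{2n,n}(\ol{k})$. To conclude $xU(T)=yU(T)$, it suffices to show that $(h_1,h_2)$ lies in the fixed subgroup $\U(T)^\Gamma=U(T)$, i.e.\ that $h_1\in GL_n(k')$ and $h_2=\ol{h_1}$.

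Entrywise $\tau$-conjugation of the relation $x=yh_1$ (valid since $x,y$ have entries in $k'$, while $h_1$ has entries in $\ol{k}$) gives $\ol{x}=\ol{y}\,\ol{h_1}$. Comparing with $\ol{x}=\ol{y}h_2$ yields $\ol{y}(h_2-\ol{h_1})=0$. Here one uses the key fact that $y$ has full column rank $n$: if $yv=0$ for some $v\in\ol{k}^n$, then $Tv={}^t\ol{y}H_n y v=0$, contradicting the nondegeneracy of $T\in \calH_n(k')\subset GL_n(k')$. Consequently $\ol{y}$ admits a left inverse and $h_2=\ol{h_1}$. The same full-rank property of $y$, now applied over $k'$ (pick any left inverse $y^{(L)}\in M_{n,2n}(k')$ of $y$), together with $x\in M_{2n,n}(k')$, shows that $h_1=y^{(L)}x\in M_n(k')$; its invertibility follows from ${}^th_2\,T\,h_1=T$ and the invertibility of $T$ and $h_2$.

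Putting everything together, $(h_1,h_2)=(h_1,\ol{h_1})$ with $h_1\in GL_n(k')$ and ${}^t\ol{h_1}Th_1=T$, so $h_1\in U(T)$, and thus $xU(T)=yU(T)$. The only point that deserves genuine attention is the nondegeneracy argument giving full rank of $y$ (and hence of $\ol{y}$); everything else is bookkeeping for the $\Gamma$-action via Galois descent of the coefficients from $\ol{k}$ to $k'$. I do not foresee a substantive obstacle.
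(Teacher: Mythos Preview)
Your approach is the same as the paper's: exploit the full column rank of an element of $\frX_T$ (a consequence of $T = y^* H_n y$ being invertible) to show that $h_1 \in GL_n(k')$ and then that $h_2 = \ol{h_1}$. The paper phrases the rank step by selecting $n$ linearly independent rows of $x$ to form an invertible submatrix $x_0 \in GL_n(k')$ and writing $h_1 = x_0^{-1}y_0$; your left-inverse formulation $h_1 = y^{(L)}x$ is equivalent.

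There is one ordering issue in your write-up. You apply ``entrywise $\tau$-conjugation'' to $x = yh_1$ to obtain $\ol{x} = \ol{y}\,\ol{h_1}$ \emph{before} establishing that $h_1$ has entries in $k'$. But in the paper's convention $\ol{g} = g^\tau$ is defined only for matrices with entries in $k'$; for a general $h_1 \in GL_n(\ol{k})$ the symbol $\ol{h_1}$ is not meaningful (or depends on a noncanonical lift of $\tau$ to $Gal(\ol{k}/k)$), so your parenthetical justification does not hold as stated. The fix is simply to reorder the two steps you already have: first use the left-inverse argument to deduce $h_1 = y^{(L)} x \in M_n(k')$, and only then apply $\tau$ to get $\ol{x} = \ol{y}\,\ol{h_1}$; comparison with $\ol{x} = \ol{y} h_2$ and the rank of $\ol{y}$ then gives $h_2 = \ol{h_1}$. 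With that reordering your proof is complete and matches the paper's.
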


\begin{proof}
Assume $\vphi_T(x U(T)) = \vphi_T(y U(T))$. Then, for some $(h_1, h_2) \in \U(T)$, we have $y = xh_1, \; \ol{y} = xh_2$. Taking $n$ linearly independent rows from $x$, we make $x_0 \in GL_n(k')$. Then 
$y_0 = x_0h_1 \in GL_n(k'), \; h_1 = x_0^{-1}y_0 \in GL_n(k')$ and $h_2 = \ol{x_0}^{-1} \ol{y_0} = \ol{h_1} \in GL_n(k')$. Hence $(h_1, h_2) \in U(T)$ and $xU(T) = yU(T)$.
\end{proof}

\bigskip
Hereafter we understand $X_T$ as a subspace of $\X_T(k)$ through $\vphi_T$. Set 
$$
T_1 = \twomatrix{\pi}{0}{0}{1_{n-1}}, \quad \wt{\eta_\pi} = (\eta_\pi, \eta_\pi), \quad \eta_\pi = \twomatrix{\sqrt{\pi}^{-1}}{0}{0}{1_{n-1}}.
$$

\begin{lem} \label{map f}
The map
$$
f : \X_{T_1}(\ol{k}) \longrightarrow \X_{1_n}(\ol{k}), \; (x,y)\U(T_1) \longmapsto (x\eta_\pi, y\eta_\pi)\U(1_n)
$$
is well defined and sends $\X_{T_1}(k)$ into $\X_{1_n}(k)$.
\end{lem}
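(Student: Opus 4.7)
The plan is to verify three things in turn: (i) for any lift $(x,y)\in\frX_{T_1}(\ol{k})$, the pair $(x\eta_\pi, y\eta_\pi)$ lies in $\frX_{1_n}(\ol{k})$; (ii) the image modulo $\U(1_n)$ is independent of the representative; (iii) the resulting map is $\Gamma$-equivariant, so that $\Gamma$-fixed points go to $\Gamma$-fixed points.

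Step (i) is a direct computation: since $\eta_\pi$ is diagonal, ${}^t\eta_\pi=\eta_\pi$, so
$$
{}^t(y\eta_\pi)H_n(x\eta_\pi)=\eta_\pi\cdot{}^tyH_nx\cdot\eta_\pi=\eta_\pi T_1\eta_\pi=\mathrm{diag}(\sqrt{\pi}^{-1}\cdot\pi\cdot\sqrt{\pi}^{-1},1,\ldots,1)=1_n.
$$
For step (ii), given $(h_1,h_2)\in\U(T_1)$, I would rewrite $(xh_1\eta_\pi,yh_2\eta_\pi)=(x\eta_\pi,y\eta_\pi)\cdot(\eta_\pi^{-1}h_1\eta_\pi,\eta_\pi^{-1}h_2\eta_\pi)$, and then, using $\eta_\pi^{-2}=T_1$ together with ${}^th_2T_1h_1=T_1$,
$$
{}^t(\eta_\pi^{-1}h_2\eta_\pi)(\eta_\pi^{-1}h_1\eta_\pi)=\eta_\pi\cdot{}^th_2T_1h_1\cdot\eta_\pi=\eta_\pi T_1\eta_\pi=1_n,
$$
so the conjugated pair lies in $\U(1_n)$ and the $\U(1_n)$-coset of $(x\eta_\pi,y\eta_\pi)$ does not depend on the choice of representative.

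Step (iii) is the delicate point: although $T_1$, $1_n$, $H_n$ are defined over $k$ and hence $\Gamma$-stable, $\eta_\pi$ is not, because $\sqrt{\pi}$ generates a ramified quadratic extension of $k$ and is moved nontrivially by some $\sigma\in\Gamma$. For each $\sigma\in\Gamma$ I would set $\ve_\sigma=\sigma(\sqrt{\pi})/\sqrt{\pi}\in\{\pm1\}$ and $I_\sigma=\mathrm{diag}(\ve_\sigma,1,\ldots,1)\in GL_n(k)$, so that $\eta_\pi^\sigma=I_\sigma\eta_\pi=\eta_\pi I_\sigma$ (both factors diagonal, hence commuting). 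The key observation is that $I_\sigma^2=1_n$, which gives $(I_\sigma,I_\sigma)\in\U(1_n)$. Splitting into the two cases $\sigma|_{k'}=\mathrm{id}$ and $\sigma|_{k'}=\tau$ and invoking the prescribed $\Gamma$-action on $\frX_{T_1}(\ol{k})$, in each case one gets
$$
\sigma\cdot(x\eta_\pi,y\eta_\pi)=\bigl(\text{a representative of } f(\sigma\cdot(x,y)\U(T_1))\bigr)\cdot(I_\sigma,I_\sigma),
$$
so the two cosets in $\X_{1_n}(\ol{k})$ agree and $f$ commutes with the $\Gamma$-actions. Restricting to $\Gamma$-fixed points yields $f(\X_{T_1}(k))\subset\X_{1_n}(k)$.

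The only real obstacle is the Galois twist of $\sqrt{\pi}$ appearing in step (iii); once one notices that the discrepancy $I_\sigma$ squares to $1_n$ and therefore already lies in $\U(1_n)$, the ambiguity is absorbed by the quotient and the verification collapses to routine bookkeeping of the two Galois cases.
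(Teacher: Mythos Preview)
Your proof is correct and follows essentially the same route as the paper: the well-definedness check is identical (conjugation by $\wt{\eta_\pi}=(\eta_\pi,\eta_\pi)$ carries $\U(T_1)$ into $\U(1_n)$), and the key observation for $k$-rationality is the same---that $\sigma(\sqrt{\pi})=\pm\sqrt{\pi}$ produces a sign matrix $I_\sigma$ which already lies in $\U(1_n)$ and so is absorbed by the quotient. The only cosmetic difference is that you package step~(iii) as $\Gamma$-equivariance of $f$ on all of $\X_{T_1}(\ol{k})$, whereas the paper verifies directly that $\sigma(f(\alpha))=f(\alpha)$ for each $\alpha\in\X_{T_1}(k)$; the underlying computation is identical.
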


\begin{proof}
For any $(x,y) \in \frX_{T_1}(\ol{k})$ and $\wt{h} = (h_1, h_2) \in \U(T_1)$, we have
\begin{eqnarray*}
(xh_1\eta_\pi, yh_2\eta_\pi) = (x\eta_\pi, y\eta_\pi) \wt{\eta_\pi}^{-1}\wt{h}\wt{\eta_\pi} \in 
(x\eta_\pi, y\eta_\pi)\U(1_n),
\end{eqnarray*}
hence $f$ is well defined.
Take any $\alp = (x,y)\U(T_1) \in \X_{T_1}(k)$. For each $\sigma \in \Gamma$, there exists some $h_\sigma \in \U(T_1)$ satisfying $\sigma(x,y) = (x, y)h_\sigma$, and $\sigma(\sqrt{\pi}) = \pm\sqrt{\pi}$. Hence
\begin{eqnarray*}
\sigma (f(\alp)) &=& (x, y)h_\sigma \wt{\eta_\pi} (\twomatrix{\pm 1}{0}{0}{1_{n-1}},\twomatrix{\pm 1}{0}{0}{1_{n-1}}) \U(1_n) \\
&=& (x\eta_\pi, y\eta_\pi) \wt{\eta_\pi}^{-1} h_\sigma \wt{\eta_\pi} \U(1_n) 
= (x\eta_\pi, y\eta_\pi) \U(1_n) = f(\alp).
\end{eqnarray*}
Hence $\sigma f = f$ for any $\sigma \in \Gamma$, and $f$ sends the set $\X_{T_1}(k)$ of $k$-rational points into to the set   $\X_{1_n}(k)$.
\end{proof}

\begin{lem}  \label{Hn-orbit}
The set $\X_{1_n}(k)$ contains at least two $U(H_n)$-orbits, $X_{1_n}$ and $f(X_{T_1})$. 
\end{lem}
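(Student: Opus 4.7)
The plan is to verify that $X_{1_n}$ and $f(X_{T_1})$ are each a single $U(H_n)$-orbit in $\X_{1_n}(k)$, and then to show that these two orbits are disjoint. The orbit property follows from two observations. First, $f$ intertwines the $U(H_n)$-actions: under the identification $U(H_n)=\U(H_n)^\Gamma$ an element $g\in U(H_n)$ corresponds to $(g,\ol g)\in\U(H_n)$, and a direct check gives $(g,\ol g)\star(x_1\eta_\pi,\ol{x_1}\eta_\pi)\U(1_n)=f(gx_1\cdot U(T_1))$. Second, $U(H_n)$ acts transitively on $X_T$ for both $T=1_n$ and $T=T_1$ by the Witt extension theorem for hermitian forms over $k'$: given $x,y\in\frX_T$, the equality $H_n[x]=H_n[y]=T$ exhibits $x$ and $y$ as isometric $k'$-embeddings of $(k'^n,T)$ into $(k'^{2n},H_n)$, which extend to an isometry, yielding $gx=yh$ for some $g\in U(H_n)$ and $h\in U(T)$.

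The core content is disjointness. Suppose for contradiction that $(x_0,\ol{x_0})\U(1_n)=(x_1\eta_\pi,\ol{x_1}\eta_\pi)\U(1_n)$ for some $x_0\in\frX_{1_n}$ and $x_1\in\frX_{T_1}$, so $x_0=x_1\eta_\pi h_1$ for some $h_1\in GL_n(\ol k)$; the defining equation actually shows $h_1\in GL_n(k'(\sqrt\pi))$. Let $\tau_2$ be the nontrivial element of $Gal(k'(\sqrt\pi)/k')$, so $\tau_2(\sqrt\pi)=-\sqrt\pi$ and $\eta_\pi^{\tau_2}=J\eta_\pi$ with $J=Diag(-1,1_{n-1})$. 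Applying $\tau_2$ to $x_0=x_1\eta_\pi h_1$, using that $x_0$ and $x_1$ are $\tau_2$-fixed, and left-canceling $x_1$ (which has full column rank because $T_1$ is non-degenerate) yields $h_1^{\tau_2}=Jh_1$. Writing $h_1=a+\sqrt\pi\,b$ with $a,b\in M_n(k')$, this relation forces the first row of $a$ and all but the first row of $b$ to vanish, so $h_1=\eta_\pi^{-1}M$ for some $M\in GL_n(k')$. Substituting back gives $x_0=x_1M$, and applying $H_n[\,\cdot\,]$ to both sides produces ${}^t\ol M\,T_1\,M=1_n$; taking determinants then forces $N_{k'/k}(\det M)=\pi^{-1}$, which contradicts $N_{k'/k}(k'^\times)=\calO_k^\times\cdot\pi^{2\Z}$ (valid because $k'/k$ is unramified).

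The main obstacle is the Galois descent step above: one has to move from $Gal(k'/k)$ to $Gal(k'(\sqrt\pi)/k)$ to extract a workable constraint on $h_1$, and then recognize that this constraint pins $h_1$ down to the form $\eta_\pi^{-1}M$ with $M\in GL_n(k')$, at which point the problem reduces to whether $1_n$ and $T_1$ are equivalent hermitian forms over $k'$. Once this reduction is in hand, the argument collapses to the parity-of-valuation obstruction that separates the two $GL_n(k')$-orbits of $\calH_n(k')$ recalled in \eqref{herm}, and which also underlies the two-$G$-orbit decomposition of $X_n$ in Theorem~\ref{thm: G-orbits}.
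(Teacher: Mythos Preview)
Your proof is correct, but the disjointness step takes a longer route than the paper's. The paper bypasses the Galois descent via $\tau_2\in Gal(k'(\sqrt\pi)/k')$ entirely: once you have $x_0=x_1\eta_\pi h_1$ with $x_0,x_1\in M_{2n,n}(k')$, choose $n$ linearly independent rows of $x_1$ to form $x_{1,0}\in GL_n(k')$ and let $x_{0,0}\in GL_n(k')$ be the matching rows of $x_0$; then $\eta_\pi h_1=x_{1,0}^{-1}x_{0,0}\in GL_n(k')$ immediately, which is exactly the conclusion $h_1=\eta_\pi^{-1}M$ you extract from the $\tau_2$-analysis. (In fact your own sentence ``the defining equation actually shows $h_1\in GL_n(k'(\sqrt\pi))$'' already contains this row-selection argument implicitly, and if carried out explicitly it gives the stronger statement $\eta_\pi h_1\in GL_n(k')$ at once.) From that point both arguments coincide: one gets ${}^t\ol{M}\,T_1\,M=1_n$ for some $M\in GL_n(k')$, and the parity-of-valuation obstruction for norms from an unramified extension finishes the proof. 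Your explicit verification of transitivity via Witt extension and of $U(H_n)$-equivariance of $f$ is fine; the paper leaves these implicit, relying on the homogeneity recorded in \cite{Oda}.
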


\begin{proof}
By Lemma~\ref{map f}, we see 
\begin{eqnarray} \label{comm-diag}
\begin{array}{cccc}
X_{T_1}  \subset & \X_{T_1}(k) &\subset & \X_{T_1}(\ol{k}) \\[2mm]
{} & \mapdownr{f} & {} & \mapdownr{f}\\[2mm]
X_{1_n}  \subset & \X_{1_n}(k) &\subset &\X_{1_n}(\ol{k}),
\end{array}
\end{eqnarray}
hence it suffices to show $X_{1_n} \cap f(X_{T_1}) = \emptyset$.
Assume there exists some $(x,\ol{x}) \in \frX_{T_1}, \; (y,\ol{y}) \in \frX_{1_n}, \; (h_1, h_2) \in U(1_n)$ such that $(x\eta_\pi,\ol{x}\eta_\pi) = (y, \ol{y})(h_1, h_2)$. 
Taking suitable linearly independent rows from $x$, we have $x_0 \in GL_n(k')$ and $y_0 = x_0\eta_\pi h_1^{-1} \in GL_n(k')$. Then we see
$$
h_1\eta_\pi^{-1} = y_0^{-1}x_0 \in GL_n(k'), \quad \ol{h_1 \eta_\pi^{-1}} = h_2 \eta_\pi^{-1}.
$$
Setting $h = h_1\eta_\pi^{-1}$, we have
$$
{}^t\ol{h}1_nh = \eta_\pi^{-1}{}^th_2 1_n h_1\eta_\pi^{-1} = \eta_\pi 1_n \eta_\pi = T_1,
$$ 
which is impossible for $h \in GL_n(k')$. 
\end{proof}

\bigskip
We consider
$$
\N = \set{(\twomatrix{h_1}{0}{0}{k_1}, \twomatrix{h_2}{0}{0}{k_2})}{(h_1, h_2), \; (k_1, k_2) \in \U(1_n)}
$$
and the similar $\Gamma$-action on $\N$ as before. Then we may identify $\N^\Gamma$ with 
$$
N = \set{\twomatrix{h}{0}{0}{k}}{h, k \in U(1_n)}.
$$
Setting
$$
y_0 = \twovector{\xi1_n}{1_n} \in \frX_{1_n}, \quad \xi = \frac{1 + \sqrt{\ve}}{2},
$$
we see the stabilizer in $U(H_n)$ at $y_0 U(1_n) \in X_{1_n}$  is given by  (cf.~\cite[Lemma~1.1]{Oda}) 
$$
\nu N \nu^{-1}, \qquad \nu = \twomatrix{\xi 1_n}{\ol{\xi}1_n}{1_n}{-1_n} \in GL_{2n}(\calO_{k'}).
$$
On the other hand, as is noted in (\ref{stabilizer at 1_2n}) 
the stabilizer in $G = U(j_{2n})$ at $1_{2n}$ is given by
$$
\mu N \mu^{-1}, \qquad \mu = \twomatrix{1_n}{1_n}{j_n}{-j_n} \in GL_{2n}(k).  
$$
Since
\begin{eqnarray*}
&&
H_n[\nu] = \twomatrix{\ol{\xi}1_n}{1_n}{\xi 1_n}{-1_n} \twomatrix{1_n}{-1_n}{\xi 1_n}{\ol{\xi}1_n} 
= \twomatrix{1_n}{0}{0}{-1_n},\\
&&
j_{2n}[\mu] = \twomatrix{1_n}{j_n}{1_n}{-j_n} \twomatrix{1_n}{-1_n}{j_n}{j_n} = 2 \twomatrix{1_n}{0}{0}{-1_n},
\end{eqnarray*}
we have
\begin{eqnarray} \label{isom}
\mu\nu^{-1} U(H_n) \nu \mu^{-1} = G, \quad
\mu\nu^{-1} \U(H_n) \nu \mu^{-1} = G(\ol{k}). 
\end{eqnarray}
where we identify $\nu$ and $\mu$ with their images in $R_{k'/k}(GL_{2n})$.
Thus we have 
\begin{equation} \label{diag 2}
\begin{array}{ccccccccc}
\X_{1_n}(\ol{k}) &\cong & {} & \U(H_n) \big{/} \nu \N \nu^{-1} & \stackrel{\vphi}{\stackrel{\sim}{\longrightarrow}}& G(\ol{k}) \big{/} \mu \N \mu^{-1} &\cong& {}& X_n(\ol{k}) \\[2mm]
\cup & {}&{} & {}&{}& {} & {}&{} & \cup \\[2mm]
\X_{1_n}(k) &\supset X_{1_n} &\cong& U(H_n) \big{/} \nu N \nu^{-1} & \stackrel{\vphi}{\stackrel{\sim}{\longrightarrow}} & G \big{/} \mu N \mu^{-1} \cong & G \cdot 1_{2n} & \subset & X_n ,\\ 
\end{array}
\end{equation}
where $\vphi$ is the conjugation determined by (\ref{isom}). Then we have the following by the commutative diagram (\ref{diag 2}), Lemma~\ref{Hn-orbit}, and Theorem~\ref{thm: G-orbits}.

\medskip
\begin{prop}
The above $\vphi$ gives an isomorphism between the sets of $k$-rational points 
$$
U(H_n) \backslash \X_{1_n}(k) \cong G \backslash X_n,
$$
and $U(H_n)$-orbit decomposition
$$
\X_{1_n}(k) = X_{1_n} \sqcup f(X_{T_1}); \quad X_{1_n} \cong G \cdot x_0, \quad 
f(X_{T_1}) \cong G \cdot x_1, 
$$
where $x_0$ and $x_1$ are the representatives of $G$-orbits in $X_n$ given in Theorem~\ref{thm: G-orbits}.
\end{prop}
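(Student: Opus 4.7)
My plan is to combine the three ingredients cited in the statement—the commutative diagram (\ref{diag 2}), Lemma~\ref{Hn-orbit}, and Theorem~\ref{thm: G-orbits}—together with a Galois-descent argument paralleling the one in the proof of Theorem~\ref{thm: G-orbits}. The bottom row of (\ref{diag 2}) already does the essential work for the correspondence $X_{1_n} \leftrightarrow G\cdot x_0$: the map $\vphi$ is induced by conjugation by $\nu\mu^{-1}$, which by (\ref{isom}) is a $k$-isomorphism $G \cong U(H_n)$ carrying the stabilizer $\mu N \mu^{-1}$ of $1_{2n}$ in $G$ onto the stabilizer $\nu N \nu^{-1}$ of $[y_0]$ in $U(H_n)$. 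Hence $\vphi$ descends to an equivariant bijection $X_{1_n} = U(H_n)\cdot[y_0] \xrightarrow{\sim} G\cdot 1_{2n} = G\cdot x_0$.

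Next I would use Lemma~\ref{Hn-orbit} and Lemma~\ref{map f} to produce a second $U(H_n)$-orbit in $\X_{1_n}(k)$, namely $f(X_{T_1})$, which is disjoint from $X_{1_n}$. To see that $f(X_{T_1})$ is a single orbit, I will invoke Witt's extension theorem for hermitian forms: $U(H_n)$ acts transitively on $\frX_{T_1}$ (any two isometric embeddings $(k'^n, T_1) \hookrightarrow (k'^{2n}, H_n)$ are related by a global isometry), so $X_{T_1} = \frX_{T_1}/U(T_1)$ is a single $U(H_n)$-orbit, and the $U(H_n)$-equivariance of $f$ transfers this to $f(X_{T_1})$.

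To exhaust $\X_{1_n}(k)$ and force the remaining pairing $f(X_{T_1}) \leftrightarrow G\cdot x_1$, I would mimic the Galois-cohomological exact sequence used in the proof of Theorem~\ref{thm: G-orbits}. Writing $\calH_1(\ol{k}) = \nu\N\nu^{-1}$ for the stabilizer of $[y_0]$ in $\U(H_n)$, the exact sequence of pointed sets
\[
1 \to U(H_n)\cdot [y_0] \to \X_{1_n}(k) \to H^1(\Gamma, \calH_1(\ol{k})) \xrightarrow{\eta'} H^1(\Gamma, \U(H_n))
\]
reduces the orbit count to $|\ker(\eta')|$; since $\calH_1$ is a $k$-form of $\U(1_n)\times\U(1_n)$ (so $H^1(\Gamma, \calH_1(\ol{k})) \cong C_2\times C_2$) and $H^1(\Gamma, \U(H_n)) \cong C_2$, the kernel has cardinality at most $2$. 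Combined with the two orbits exhibited in the first two paragraphs and with Theorem~\ref{thm: G-orbits} providing exactly two $G$-orbits in $X_n$, this will yield $\X_{1_n}(k) = X_{1_n} \sqcup f(X_{T_1})$ and the bijection $U(H_n)\backslash\X_{1_n}(k) \cong G\backslash X_n$; since the first paragraph already pins $X_{1_n}$ to $G\cdot x_0$, the pairing $f(X_{T_1}) \leftrightarrow G\cdot x_1$ is forced.

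The main obstacle will be the cohomology of the twisted stabilizer $\nu\N\nu^{-1}$: since $\nu$ is not $\Gamma$-fixed, the $\Gamma$-action on $\nu\N\nu^{-1}$ is an inner twist of that on $\N$, so I cannot simply transport the $H^1$-calculation from the $G$-side. However, $\calH_1$ remains a $k$-form of $\U(1_n)\times\U(1_n)$, and its $H^1$ is still computed by $k^\times/N_{k'/k}(k'^\times)\cong C_2$ on each factor, giving the same kernel bound as in the proof of Theorem~\ref{thm: G-orbits}.
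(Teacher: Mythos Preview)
Your detour through a second Galois-cohomology computation is both unnecessary and rests on a misunderstanding. You write that ``$\nu$ is not $\Gamma$-fixed'', but the paper explicitly identifies $\nu$ and $\mu$ with their images in $R_{k'/k}(GL_{2n})$ (the sentence just below (\ref{isom})). Under that identification $\nu$ becomes $(\nu,\ol{\nu})$, which \emph{is} $\Gamma$-fixed: for $\sigma\vert_{k'}=\tau$ one has $\sigma(\nu,\ol{\nu})=(\ol{\nu}^{\,\sigma},\nu^{\,\sigma})=(\nu,\ol{\nu})$. Likewise $\mu\in GL_{2n}(k)$ is $\Gamma$-fixed. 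Hence conjugation by $\mu\nu^{-1}$ is a $\Gamma$-equivariant isomorphism, and the top row of (\ref{diag 2}) already restricts to an equivariant bijection $\X_{1_n}(k)\xrightarrow{\sim} X_n$ intertwining the $U(H_n)$- and $G$-actions. This is the entire content of the paper's one-line proof: transport the orbit structure of $X_n$ (Theorem~\ref{thm: G-orbits}) back to $\X_{1_n}(k)$ via $\vphi$, and use Lemma~\ref{Hn-orbit} together with the bottom row of (\ref{diag 2}) to name the two orbits.

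Because you miss this, you try to bound the number of $U(H_n)$-orbits in $\X_{1_n}(k)$ directly by bounding $\abs{\ker\eta'}$. But your assertion that the kernel ``has cardinality at most $2$'' is not justified: $\eta'$ is a map of pointed sets, not a group homomorphism, so knowing only the cardinalities $4$ and $2$ of source and target gives no upper bound on the fibre over the basepoint. (Indeed, in the proof of Theorem~\ref{thm: G-orbits} the analogous map $\eta$ is used to obtain a \emph{lower} bound on the orbit count, while the upper bound there comes from the hermitian-form classification, not from cohomology.) Once you recognize that $\vphi$ is defined over $k$, no such bound is needed and the argument collapses to the paper's.
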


\medskip
Now we assume that $q$ is odd and consider the Cartan decomposition of $X_T = \frX_T/U(T)$. 
Set $K' = U(H_n) \cap GL_{2n}(\calO_{k'})$ and recall $K = G \cap GL_{2n}(\calO_{k'})$. Since $2 \notin (\pi)$, we see $\mu \in GL_{2n}(\calO_{k'})$ and 
$$
\mu \nu^{-1} K' \nu \mu^{-1} = K.
$$
Hence we have the bijective correspondence 
$$
K' \backslash \X_{1_n}(k) \longleftrightarrow K \backslash X_n, 
$$
and we see the space $X_T$ inherits the Cartan decomposition of $X_n$, the space of unitary hermitian space.  Thus we have the following.

\medskip
\begin{thm} \label{thm: oda}
Assume $k$ has odd residual characteristic and take any $T \in \calH_n(k')$. Then
\begin{eqnarray*}
\frX_T = \dsqcup{\shita{\lam \in \Lam^+}{\lam \sim T}}\, 
K' x_\lam h_\lam U(T), 
\end{eqnarray*}
where 
$$
x_\lam = \twovector{\xi \pi^\lam}{1_n} \in \frX_{\pi^\lam},  
$$
$\lam \sim T$ means that $\abs{\lam} \equiv v_\pi(\det(T)) \pmod{2}$ and guarantees the existence of $h_\lam \in GL_n(k')$ satisfying $T = \pi^\lam[h_\lam]$.
\end{thm}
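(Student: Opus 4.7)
The strategy is to reduce the general $T$-case to the two model cases $T^\circ \in \{1_n, T_1\}$ via the orbit classification of $\calH_n(k')$, and then invoke the proposition immediately preceding the theorem.

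\emph{Reduction.} Since $\calH_n(k')$ has exactly two $GL_n(k')$-orbits, distinguished by $v_\pi(\det(\cdot)) \bmod 2$, one can choose $T^\circ \in \{1_n, T_1\}$ of the same parity as $T$ and $h \in GL_n(k')$ with $T = T^\circ[h]$. Right multiplication by $h$ gives a $U(H_n)$-equivariant bijection $\frX_{T^\circ} \to \frX_T$ carrying right $U(T^\circ)$-cosets to right $U(T)$-cosets, since $h^{-1}U(T^\circ)h = U(T)$. Hence $K' \backslash \frX_T / U(T) \cong K' \backslash \frX_{T^\circ}/U(T^\circ)$, and it suffices to prove the theorem for $T = T^\circ \in \{1_n, T_1\}$.

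\emph{Parametrization.} For $T^\circ = 1_n$ (the case $T^\circ = T_1$ is analogous, using the map $f$ of Lemma~C.3 onto the complementary $U(H_n)$-orbit $f(X_{T_1}) \cong G \cdot x_1$), the preceding proposition together with $\vphi(K') = K$ — valid because $\mu \in GL_{2n}(\calO_{k'})$ when $q$ is odd — yields the bijection $K' \backslash X_{1_n} \leftrightarrow K \backslash (G \cdot x_0)$. By Theorems~\ref{thm: Cartan} and~\ref{thm: G-orbits}, the right-hand side equals $\{K \cdot x_\lam : \lam \in \Lam_n^+,\ |\lam|\text{ even}\}$, which is exactly $\{\lam \in \Lam_n^+ : \lam \sim T^\circ\}$, the parameter set asserted.

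\emph{Explicit representatives and the main obstacle.} It remains to identify $x_\lam h_\lam \cdot U(T^\circ) \in X_{T^\circ}$ with the $K'$-orbit corresponding to the $K$-orbit $K \cdot x_\lam \subset X_n$ (where $x_\lam \in X_n$ is the element from Theorem~\ref{thm: Cartan}). Writing $x_\lam h_\lam = g_\lam \cdot y_0$ with $g_\lam = \twomatrix{\pi^\lam h_\lam}{0}{0}{h_\lam}$, the membership $g_\lam \in U(H_n)$ is equivalent to $h_\lam^*\pi^\lam h_\lam = T^\circ$. Under $\vphi$, the $K'$-orbit of $x_\lam h_\lam \cdot U(1_n)$ thus corresponds to the $K$-orbit of $\vphi(g_\lam) \cdot 1_{2n} \in X_n$. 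Using the explicit form $\mu\nu^{-1} = \twomatrix{2\cdot 1_n}{-\sqrt{\ve}\,1_n}{0}{j_n}$ together with the relation $\pi^\lam h_\lam = h_\lam^{*-1}$ (valid for $T^\circ = 1_n$), a direct block computation gives that $\vphi(g_\lam)$ is block upper-triangular with diagonal blocks $h_\lam^{*-1}$ and $j_n h_\lam j_n$, so the lower-right $n \times n$ block of $\vphi(g_\lam) \cdot 1_{2n} = \vphi(g_\lam)\vphi(g_\lam)^*$ is $j_n h_\lam h_\lam^* j_n = j_n\pi^{-\lam}j_n$, matching that of $x_\lam$. The main obstacle lies in completing the $K$-orbit identification of $\vphi(g_\lam) \cdot 1_{2n}$; however, once the lower-right block is fixed the regularization procedure of Section~\ref{sec:1} (Lemmas~\ref{lem 2-1}--\ref{lem 2-4}) reduces the whole matrix to $x_\lam$, giving $\vphi(g_\lam) \cdot 1_{2n} \in K \cdot x_\lam$. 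Disjointness of the orbits $\{K' x_\lam h_\lam U(T)\}_\lam$ then follows from the disjointness of the $K$-orbits in Theorem~\ref{thm: Cartan}.
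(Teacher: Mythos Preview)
Your overall strategy—reduce to $T^\circ\in\{1_n,T_1\}$, transport to $X_n$ via $\vphi$, and read off the Cartan decomposition from Theorem~\ref{thm: Cartan}—is sound, but the paper proceeds differently at the key step and your ``explicit representatives'' argument has a genuine gap.

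\textbf{The gap.} Your claim that ``once the lower-right block is fixed the regularization procedure of Section~\ref{sec:1} reduces the whole matrix to $x_\lam$'' is not justified: Lemmas~\ref{lem 2-1}--\ref{lem 2-4} reduce any element of $X_n$ to \emph{some} $x_\mu$, but knowing only the lower-right $n\times n$ block does not by itself pin down $\mu$. What makes it work here is the specific observation that, writing $\vphi(g_\lam)\cdot 1_{2n}=\bigl(\begin{smallmatrix}A&B\\B^*&D\end{smallmatrix}\bigr)$ with $D=j_n\pi^{-\lam}j_n$ and $B=\sqrt{\eps}(1_n-\pi^{-\lam})j_n$, one has $BD^{-1}=-\sqrt{\eps}(\pi^\lam-1_n)j_n\in M_n(\calO_{k'})$ precisely because $\lam_i\ge 0$. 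Hence the unipotent element $k=\bigl(\begin{smallmatrix}1_n&-BD^{-1}\\0&1_n\end{smallmatrix}\bigr)$ lies in $K$, and $k\cdot(\vphi(g_\lam)\cdot 1_{2n})$ is block-diagonal with lower-right block $D$; the relation $(yj_{2n})^2=1_{2n}$ then forces the upper-left block to be $j_nD^{-1}j_n=\pi^\lam$, giving exactly $x_\lam$. This one computation is the missing ingredient; with it, your argument is complete (and the $T_1$ case needs the analogous check after composing with $f$).

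\textbf{Comparison with the paper.} The paper avoids this explicit identification entirely. It notes that the disjointness of the double cosets $K'x_\lam h_\lam U(T)$ was already established in \cite{Oda} via the explicit values of the spherical functions $\omega_T(x_\lam h_\lam;z)$, so the only new content is \emph{exhaustion}. That follows immediately from the bijection $K'\backslash\X_{1_n}(k)\leftrightarrow K\backslash X_n$ (valid since $\mu\in GL_{2n}(\calO_{k'})$ for $q$ odd) together with Theorem~\ref{thm: Cartan}: both sides are parametrized by $\Lam_n^+$, split by parity into the two $G$-orbits. Your route is more self-contained—it does not rely on the spherical-function computations of \cite{Oda}—at the price of the matrix calculation above; the paper's route is shorter but leans on prior work for half of the statement.
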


The above decomposition has been expected in \cite[Remark~4.2]{Oda}. 
In \cite{Oda}, where we have known the disjointness of orbits in the right hand side by explicit formulas of spherical functions $\omega_T(y; z)$, but we didn't know they are enough. By Theorem~\ref{thm: oda}, we see the spherical Fourier transform $F_T$ is isomorphic in \cite[Theorem~4.1]{Oda} if $q$ is odd.

\bibliographystyle{amsalpha}

\end{document}